\newcommand{\half}{\frac{1}{2}}
\newcommand{\R}{\mathbb{R}}
\begin{document} 
\newtheorem{prop}{Proposition}[section]
\newtheorem{Def}{Definition}[section] \newtheorem{theorem}{Theorem}[section]
\newtheorem{lemma}{Lemma}[section] \newtheorem{Cor}{Corollary}[section]

\title[Maxwell-Klein-Gordon in Lorenz gauge]{\bf Almost optimal local well-posedness for the Maxwell-Klein-Gordon system with data in Fourier-Lebesgue spaces}
\author[Hartmut Pecher]{
{\bf Hartmut Pecher}\\
Fakult\"at f\"ur  Mathematik und Naturwissenschaften\\
Bergische Universit\"at Wuppertal\\
Gau{\ss}str.  20\\
42119 Wuppertal\\
Germany\\
e-mail {\tt pecher@math.uni-wuppertal.de}}
\date{}

\begin{abstract}
We prove a low regularity local well-posedness result for the Max-\\well-Klein-Gordon system in three space dimensions for data in Fourier - Lebesgue spaces $\widehat{H}^{s,r}$ , where $\|f\|_{\widehat{H}^{s,r}} = \|\langle \xi \rangle^s \widehat{f}(\xi)\|_{\widehat{L}^{r'}}$ , $\frac{1}{r}+\frac{1}{r'} = 1$ . The assumed regularity for the data is almost optimal with respect to scaling as $r \to 1$ . This closes the gap between what is known in the case  $r=2$ , namely $s > \frac{3}{4}$ , and the critical value $s_c = \frac{1}{2}$ with respect to scaling.
\end{abstract}

\maketitle
\renewcommand{\thefootnote}{\fnsymbol{footnote}}
\footnotetext{\hspace{-1.5em}{\it 2000 Mathematics Subject Classification:} 
35Q61, 35L70 \\
{\it Key words and phrases:} Maxwell-Klein-Gordon,  
local well-posedness, Fourier restriction norm method}
\normalsize 
\setcounter{section}{0}
\section{Introduction and main results}
\noindent The Maxwell-Klein-Gordon system couples Maxwell's equation for the electromagnetic field $F_{\mu \nu}: \R^{n+1} \to \R$ with a Klein-Gordon equation for a scalar field $\phi: \R^{n+1} \to \mathbb{C}$ and reads
\begin{align}
\label{1}
\partial^{\nu} F_{\mu \nu} & =  j_{\mu} \\
\label{2}
D^{(A)}_{\mu} D^{(A)\mu} \phi & = m^2 \phi \, ,
\end{align}
where $m>0$ is a constant and
\begin{align}
\label{3}
F_{\mu \nu} & := \partial_{\mu} A_{\nu} - \partial_{\nu} A_{\mu} \\
\label{4}
D^{(A)}_{\mu} \phi & := \partial_{\mu} - iA_{\mu} \phi \\
\label{5}
j_{\mu} & := Im(\phi \overline{D^{(A)}_{\mu} \phi}) = Im(\phi \overline{\partial_{\mu} \phi}) + |\phi|^2 A_{\mu} \, .
\end{align}
Here  $A_{\nu} : {\mathbb R}^{n+1} \to {\mathbb R}$ is the potential. We use the notation $\partial_{\mu} = \frac{\partial}{\partial x_{\mu}}$, where we write $(x^0,x^1,...,x^n) = (t,x^1,...,x^n)$ and also $\partial_0 = \partial_t$ and $\nabla = (\partial_1,...,\partial_n)$. Roman indices run over $1,...,n$ and greek indices over $0,...,n$ and repeated upper/lower indices are summed. Indices are raised and lowered using the Minkowski metric $diag(-1,1,...,1)$.

The Maxwell-Klein-Gordon system describes the motion of a spin 0 particle with mass $m$ self-interacting with an electromagnetic field.

We consider the Cauchy problem in three space dimensions with data
$\phi(x,0) = \phi_0(x)$ , $\partial_t \phi(x,0) $ $= \phi_1(x)$, $F_{\mu \nu}(x,0) =  F^0_{\mu \nu}(x)$ . The potential $A$ is not uniquely determined but one has gauge freedom. The Maxwell-Klein-Gordon equation is namely invariant under the gauge transformation
$\phi \to \phi' = e^{i\chi}\phi$ , $A_{\mu} \to A'_{\mu} = A_{\mu} + \partial_{\mu} \chi$
for any $\chi: {\mathbb R}^{n+1} \to {\mathbb R}$. We exclusively consider the Lorenz gauge $\partial^{\mu} A_{\mu} = 0$. However we remark that even this gauge does not uniquely determine $A$ , because any $\chi$ satisfying $\Box \chi =0$ preserves the Lorenz gauge condition, so that one has to add a further condition (cf. (\ref{10}) below) in order to obtain a unique potential.

We can reformulate the system (\ref{1}),(\ref{2}) under the Lorenz condition
\begin{equation}
\label{6}
\partial^{\mu} A_{\mu} = 0
\end{equation}
 as follows:
$$\square A_{\mu} = \partial^{\nu} \partial_{\nu} A_{\mu} = \partial^{\nu}(\partial_{\mu} A_{\nu} - F_{\mu \nu}) = -\partial^{\nu} F_{\mu \nu} = - j_{\mu} \, , $$
thus (using the notation $\partial = (\partial_0,\partial_1,...,\partial_n)$):
\begin{equation}
\label{16}
\square A = -Im (\phi \overline{\partial \phi}) - A|\phi|^2 =: N(A,\phi) 
\end{equation}
and
\begin{align*}
m^2 \phi & = D^{(A)}_{\mu} D^{(A)\mu} \phi = \partial_{\mu} \partial^{\mu} \phi -iA_{\mu} \partial^{\mu} \phi -i\partial_{\mu}(A^{\mu} \phi) - A_{\mu}A^{\mu} \phi \\
& = \square \phi - 2i A^{\mu} \partial_{\mu} \phi - A_{\mu} A^{\mu} \phi \,
\end{align*}
thus
\begin{equation}
\label{17}
(\square -m^2) \phi = 2i A^{\mu} \partial_{\mu} \phi + A_{\mu} A^{\mu} \phi =: M(A,\phi) \, .
\end{equation}
Conversely, if $\square A_{\mu} = -j_{\mu}$ and $F_{\mu \nu} := \partial_{\mu} A_{\nu} - \partial_{\nu} A_{\mu}$ and the Lorenz condition (\ref{6}) holds then
$$ \partial^{\nu} F_{\mu \nu} = \partial^{\nu}(\partial_{\mu} A_{\nu} - \partial_{\nu} A_{\mu}) = \partial_{\mu} \partial^{\nu} A_{\nu} - \partial^{\nu} \partial_{\nu} A_{\mu} = -\square A_{\mu} = j_{\mu} \, $$
thus (\ref{1}),(\ref{2}) is equivalent to (\ref{16}),(\ref{17}), if (\ref{3}),(\ref{4}) and (\ref{6}) are satisfied.

We assume that the Cauchy data belong to Fourier-Lebesgue spaces:
\begin{equation}
\label{7}
\phi(0)=\phi_0 \in \widehat{H}^{s,r} \quad ,  \quad (\partial_t \phi)(0) = \phi_1 \in \widehat{H}^{s-1,r} \, ,
\end{equation}
\begin{equation}
\label{8}
F_{\mu \nu} = F^0_{\mu \nu} \in \widehat{H}^{s-1,r} \, ,
\end{equation}
where $\|f\|_{\widehat{H}^{s,r}} = \|\langle \xi \rangle^s \widehat{f}(\xi)\|_{{L}^{r'}}$ , $\frac{1}{r}+\frac{1}{r'} = 1$ .

The system (\ref{16}),(\ref{17}) is invariant under the scaling
$$ A_{\lambda}(t,x) = \lambda A(\lambda t,\lambda x) \, , \, \phi_{\lambda}(t,x) = \lambda \phi(\lambda t, \lambda x) \, ,$$
i.e. $A_{\lambda},\phi_{\lambda}$ are solutions, if $A,\phi$ are. The homogeneous Fourier-Lebesgue norm of the initial data scales like
$$\| A_{\lambda}(0,\cdot)\|_{\dot{\widehat{H}}^{s,r}(\R^3)} = \lambda^{s-\frac{3}{r}+1}\| A(0,\cdot)\|_{\dot{\widehat{H}}^{s,r}(\R^3)} \, , $$
so that the critical exponent with respect to scaling is $s_c = \frac{3}{r} -1$ for $A_{\mu}$ as well as $\phi$, thus the critical exponent for $F_{\mu \nu} = \partial_{\mu} A_{\nu} - \partial_{\nu} A_{\mu}$ is $s_c' = \frac{3}{r}- 2$ .  No well-posedness is expected for $ s < s_c$ . Thus, up to the endpoint the best one can hope for is local well-posedness for $ s > s_c$ .

In the classical case $r=2$ and Coulomb gauge $\partial^j A_j = 0$ Klainerman and Machedon \cite{KM} showed global well-posedness in energy space and above, i.e. for data $\phi_0 \in H^s$ , $\phi_1 \in H^{s-1}$ , $a_{0 \nu} \in H^s$ , $\dot{a}_{0 \nu} \in H^{s-1}$ with $s \ge 1$ in $n=3$ dimensions. They made the fundamental observation that the nonlinearities fulfill a null condition. This global well-posedness result was improved by Keel, Roy and Tao \cite{KRT}, who had only to assume $s > \frac{\sqrt 3}{2}$. Local well-posedness for low regularity data was shown by Cuccagna \cite{C} for $s > 3/4$ and small data, but Selberg \cite{S} remarked that this smallness assumption could be removed, all these results for three space dimensions and in Coulomb gauge. Machedon and Sterbenz \cite{MS} proved local well-posedness even in the almost critical range $ s > \half$ , but had to assume a smallness assumption  on the data.

In Lorenz gauge $\partial^{\mu} A_{\mu} = 0$ and data in Sobolev spaces $H^s$,   which was considered much less in the literature, because the nonlinear term $Im(\phi \overline{\partial_{\mu} \phi})$ has no null structure, and three space dimensions the most important progress was made by Selberg and Tesfahun \cite{ST} who were able to circumvent the problem of the missing null condition in the equations for $A_{\mu}$ by showing that the decisive nonlinearities in the equations for $\phi$ as well as $F_{\mu \nu}$ fulfill such a null condition which allows to show that global well-posedness holds for large, finite energy data $s=1$ . The potential possibly loses some regularity compared to the data but as remarked also by the authors this is not the main point because one is primarily interested in the regularity of $\phi$ and $F_{\mu \nu}$ , which both preserve the regularity. Below energy level the author \cite{P2} was also able to show local well-posedness  for large data in this sense, provided $s > \frac{3}{4}$. 

This means that for large data there is still a gap of $\frac{1}{4}$ to $ s_c > \half$ predicted by scaling both in Coulomb and Lorenz gauge. 

We work in Lorenz gauge and close this gap in the limit $r \to 1$ for large data in Fourier-Lebesgue spaces $\widehat{H}^{s,r}$ for $ 1 < r \le 2$ , thus leaving the $H^s$-scale of data ($r=2$). This is remarkable in view of the fact that one of the nonlinearities does not fulfill a null condition. In some sense it is the first large data almost optimal local well-posedness result for the Maxwell-Klein-Gordon equations no matter which gauge is considered.

A null structure in Lorenz gauge was first detected for the Maxwell-Dirac system by d'Ancona, Foschi and Selberg \cite{AFS1}.

In two space dimensions in Coulomb gauge Czubak and Pikula \cite{CP} proved local well-posedness provided that $\phi_0 \in H^s$ , $\phi_1 \in H^{s-1}$ , $a_{0 \nu} \in H^r$ , $\dot{a}_{0 \nu} \in H^{r-1}$,  where $1 \ge s=r > \frac{1}{2}$ or $s=\frac{5}{8}+\epsilon$ , $r=\frac{1}{4}+\epsilon$. 

In four space dimensions Selberg \cite{S} showed local well-posedness in Coulomb gauge for $s>1$ , which is almost critical. In Lorenz gauge
the author \cite{P1} considered also the case $n \ge 4$ and proved local well-posedness for $ s > \frac{n}{2}-\frac{5}{6} $ . 

In order to achieve an almost optimal local well-posedness result for $n=3$ we consider the Lorenz gauge and Cauchy data in Fourier-Lebesgue spaces $\widehat{H}^{s,r}$ for $1 < r \le 2$ . Data in spaces of this type were previously considered by Gr\"unrock \cite{G} and Gr\"unrock-Vega \cite{GV} for KdV and modified KdV equations. Gr\"unrock \cite{G1} used these spaces in order to prove almost optimal low regularity local well-posedness also for wave equations with quadratic derivative nonlinearities for $n=3$. For wave equations with a nonlinearity which fulfills a  null condition this was also shown in the case $n=2$
by Grigoryan-Nahmod \cite{GN}. These results relied on a modification of bilinear estimates which were given by Foschi-Klainerman \cite{FK} in the classical $L^2$-case.

Our solution spaces are generalized Bourgain-Klainerman-Machedon spaces $X^r_{s,b,\pm}$ which were already introduced by Gr\"unrock \cite{G}. They are defined  by its norms
$$ \|\phi\|_{X^r_{s,b,\pm}} = \|\langle \xi \rangle^s \langle \tau \pm |\xi| \rangle^b \widehat{\phi}(\xi,\tau)\|_{L^{r'}_{\tau \xi}}$$
 for $1 < r < \infty$ and $\frac{1}{r}+\frac{1}{r'}=1$ , and 
$$\|\phi\|_{X^r_{s,b,\pm}[0,T]} = \inf_{\bar{\phi}_{|[0,T]} = \phi} \| \bar{\phi} \|_{X^r_{s,b,\pm}} \, . $$
We show that for (admissible) data $\phi(0) \in \widehat{H}^{s,r}$ , $(\partial_t \phi)(0)  \in \widehat{H}^{s-1,r}$ and $(\nabla A_{\mu})(0) \in \widehat{H}^{s-1,r}$ , $(\partial_t A_{\mu})(0) \in \widehat{H}^{s-1,r}$ we obtain a solution of (\ref{16}),(\ref{17}) , where $\phi$ belongs to  $X^r_{s,b,+}[0,T] + X^r_{s,b,-}[0,T] \subset C^0([0,T],\widehat{H}^{s,r}) \cap C^1([0,T],\widehat{H}^{s-1,r})$ for some $b > \frac{1}{r}$ and $s > \frac{5}{2r}-\half$ , so that $s \to 2$ as $r \to 1$ , which is almost optimal with respect to scaling. We also obtain $\nabla A_{\mu} \in X^r_{l,b,+}[0,T] + X^r_{l,b,+}[0,T]$ , where $l > \frac{2}{r}-1$, so that $l \to2$ as $r \to 1$ , but $ l < s $ due to the missing null condition in the term $Im(\phi \overline{\partial \phi})$. However this is of minor interest, because the really important fact is that  $F_{\mu \nu} \in X^r_{s-1,b,+}[0,T] + X^r_{s-1,b,-}[0,T] $ for data $F_{\mu\nu}(0) \in \widehat{H}^{s-1,r}$ , $(\partial_t F_{\mu\nu})(0) \in \widehat{H}^{s-2,r}$. This is a consequence of the fact that $F_{\mu\nu}$ fulfills a wave equation with null forms in the quadratic inhomogeneous terms (see (\ref{27}),(\ref{28}) below).

Fundamental are of course the bi- and trilinear estimates for the nonlinearities where the quadratic terms have null structure except one term, namely $Im(\phi \overline{\partial_{\mu} \phi})$. This is the reason why $A_{\mu}$ (possibly) loses regularity in time. We rely on the bilinear estimates of Foschi-Klainerman \cite{FK}, which were already successfully applied by Gr\"unrock \cite{G1} and Grigoryan-Nahmod \cite{GN}. The general local well-posedness theorem for nonlinear systems of wave equations (and also other types of evolution equations) in $X^r_{s,b}$-spaces , which reduces the problem to multilinear estimates for the nonlinear terms, goes back to Gr\"unrock \cite{G}, cf. also \cite{GN}. For the Cauchy problem for the Maxwell-Klein-Gordon system in Lorenz gauge with $L^2$-based data we rely on the author's paper \cite{P}, which is a refinement of the earlier paper \cite{P2}.

We use the following notation.
Let $\widehat{f}$ denote the Fourier transform of $f$ with respect to space and time as well as with respect to space, which should be clear from the context.
We define the wave-Sobolev spaces $X^r_{s,b,\pm}$ for $1 < r \le 2$ and $\frac{1}{r}+\frac{1}{r'}=1$ as the completion of the Schwarz space $\mathcal{S}({\mathbb R}^{n+1})$ with respect to the norm
$$ \|u\|_{X^r_{s,b,\pm}} = \| \langle \xi \rangle^s \langle  \tau \pm |\xi| \rangle^b \widehat{u}(\tau,\xi) \|_{L^{r'}_{\tau \xi}}  $$
and $X^r_{s,b,\pm}[0,T]$ as the space of the restrictions to $[0,T] \times \mathbb{R}^n$.

We also define the spaces $X^r_{s,b}$ as the completion of  $\mathcal{S}({\mathbb R}^{n+1})$ with respect to the norm
$$ \|u\|_{X^r_{s,b}} =  \| \langle \xi \rangle^s \langle  |\tau| - |\xi| \rangle^b \widehat{u}(\tau,\xi) \|_{L^{r'}_{\tau \xi}}  $$ 
and 
$$ \|u\|_{\dot{X}^r_{s,b}} =  \| |\xi|^s \langle  |\tau| - |\xi| \rangle^b \widehat{u}(\tau,\xi) \|_{L^{r'}_{\tau \xi}}  $$ 
We remark that $\|\phi\|_{X^r_{s,b}} \le \|\phi\|_{X^r_{s,b,\pm}}$ for $b \ge 0$ and the opposite inequality for $b \le 0$ .

Let $\Lambda^{\alpha}$ , $\Lambda_m$ and $D^{\alpha}$  be the multipliers with symbols $\langle \xi \rangle^{\alpha}$ , $(m^2+|\xi|^2)^{\half}$ and 
$|\xi|^{\alpha}$ , respectively, where $ \langle \, \cdot \, \rangle = (1+|\, \cdot \,|^2)^{\frac{1}{2}}$ , $|\nabla| = D$ .

$\Box = -\partial_t^2 + \Delta$ is the d'Alembert operator. $a{\pm} = a\pm\epsilon$ for a sufficiently small $\epsilon > 0$ and $a++=(a+)+$ . \\[0.5em]

Next we formulate our main results. We assume the Lorenz condition
\begin{equation}
\label{6'}
\partial^{\mu} A_{\mu} = 0
\end{equation}
and Cauchy data
\begin{align}
\label{7'}
\phi(x,0) &= \phi_0(x) \in \widehat{H}^{s,r} \quad , \quad \partial_t \phi(x,0) = \phi_1(x) \in \widehat{H}^{s-1,r} \, , \\
\label{8'}
F_{\mu \nu}(x,0)& = F^0_{\mu \nu}(x) \in \widehat{H}^{s-1,r} \, .
\end{align}
We define
\begin{equation}
\label{9}
A_{\nu}(x,0) =: a_{0 \nu}(x) \quad , \quad \partial_t A_{\nu}(x,0) =: \dot{a}_{0 \nu}(x) \, ,
\end{equation} 
which are assumed to fulfill 
\begin{equation}
\label{10}
a_{00} = \dot{a}_{00} = 0 \, , 
\end{equation}
and the following compatibility conditions
\begin{equation}
\label{12}
\partial^k a_{0k} = 0 \, ,
\end{equation}
\begin{equation}
\label{13}
\partial_j a_{0k} - \partial_k a_{0j} = F^0_{jk} \, ,
\end{equation}
\begin{equation}
\label{14}
\dot{a}_{0k} = F^0_{0k} \, , 
\end{equation}
\begin{equation}
\label{15}
\partial^k F^0_{0k}  = Im(\phi_0 \overline{\phi}_1) \, .
\end{equation}
(\ref{10}) may be assumed because otherwise the Lorenz condition does not determine the potential uniquely. As remarked already any function  $\chi$ in the gauge transformation with $\Box \chi =0$ preserves the Lorenz condition. Thus in order to obtain uniqueness we assume that $\chi$ moreover fulfills $\Delta \chi(0) = - \partial^j a_{0j}$ and $(\partial_t \chi)(0) = - a_{00}$ . This implies by the gauge transformation $a'_{00} = a_{00} + (\partial_t \chi)(0) = 0$ , $\partial^j a'_{0j} = \partial^j a_{0j} + \partial^j \partial_j \chi(0) = 0 $,  so that the Lorenz condition implies $\dot{a}_{00}   = 0$ .

(\ref{12}) follows from the Lorenz condition (\ref{6}) in connection with (\ref{10}). (\ref{13}) follows from (\ref{3}), similarly (\ref{14}) from (\ref{3}) and (\ref{10}), thus $\dot{a}_{0k}$ is uniquely determined. (\ref{1}) requires
$$ \partial^k F^0_{0k} = j_0(0) = Im(\phi_0 \overline{\phi}_1) + |\phi_0|^2 a_{00} = Im(\phi_0 \overline{\phi}_1) \, $$
thus (\ref{15}). By (\ref{12}) we have
$$ \Delta a_{0j} = \partial^k \partial_k a_{0j} = \partial^k(\partial^j a_{0k} - F^0_{jk}) = - \partial^k F^0_{jk} \, , $$
so that $a_{0j}$ is uniquely determined as
$$ a_{0j} = (-\Delta)^{-1} \partial^k F^0_{jk} \, .$$ 
These conditions imply the following regularity for the initial data
\begin{equation}
\label{11}
\nabla a_{0j} \in \widehat{H}^{s-1,r} \, , \,  \dot{a}_{0j} \in \widehat{H}^{s-1,r} \, .
\end{equation}

We prefer to rewrite our system (\ref{16}),(\ref{17}) as a first order (in t) system.
Let $\phi_{\pm}=\frac{1}{2}(\phi \pm (i\Lambda_m)^{-1}\phi_t)$ , so that $\phi=\phi_+ + \phi_-$ and $ \partial_t \phi = i \Lambda_m(\phi_+ - \phi_-)$, and $A_{\pm} = \frac{1}{2}(A \pm (iD)^{-1} A_t)$ so that $A=A_+ + A_-$ and $\partial_t A = iD(A_+-A_-)$ . We obtain the equivalent system
\begin{align}
\label{3.1}
(i\partial_t \pm \Lambda_m) \phi_{\pm} & = - (\pm 2 \Lambda_m)^{-1} \mathcal{M}(\phi_+,\phi_-,A_+,A_-) \\
\label{3.2}
(i\partial_t \pm D)A_{\pm} & = -(\pm 2D)^{-1} \mathcal{N}(\phi_+,\phi_-,A_+,A_-) \, ,
\end{align}
where
\begin{align}
\label{3.3}
\mathcal{M}(\phi_+,\phi_-,A_+,A_-) & = A^{\mu} \partial_{\mu} \phi + A_{\mu} A^{\mu} \phi \\
\label{3.4}
\mathcal{N}_0(\phi_+,\phi_-,A_+,A_-) & = Im(\phi i \Lambda_m(\overline{\phi}_+ - \overline{\phi}_-)) - A_0 |\phi|^2 \\
\label{3.5}
\mathcal{N}_j(\phi_+,\phi_-,A_+,A_-) & = -Im(\phi \overline{\partial_j \phi}) -A_j |\phi|^2 \, .
\end{align}
The initial data are given by
\begin{align}
\label{3.5a}
\phi_{\pm}(0) & = \frac{1}{2}(\phi_0 \pm (i \Lambda_m)^{-1} \phi_1) \in \widehat{H}^{s,r}\\
\label{3.5b}
A_{0\pm}(0)& = \frac{1}{2}(a_{00}\pm(iD^{-1})\dot{a}_{00}) = 0 \\
\label{3.5c}
A_{j\pm}(0) & = \frac{1}{2}(a_{0j} \pm (iD)^{-1} \dot{a}_{0j})  \, , \,
  \nabla A_{j\pm}(0) \in \widehat{H}^{l-1,r} \quad {\mbox for}\,\, l \le s\, .
\end{align}
(\ref{3.5b}) follows from (\ref{10}). The regularity follows by (\ref{7}) and (\ref{11}). 

We split $A_{\pm} = A_{\pm}^{hom} + A_{\pm}^{inh}$ into its homogeneous and inhomogeneous part, where $(i\partial_t \pm D)A_{\pm}^{hom} =0$ with data as in (\ref{3.5b}) and (\ref{3.5c}) and $A_{\pm}^{inh}$ is the solution of (\ref{3.2}) with zero data. \\[0.5em]

Our first main theorem reads as follows:
\begin{theorem}
\label{Theorem0.1}
Let $1 < r \le 2$ , and assume $ s = \frac{5}{2r}-\half+\delta$ , $ l = \frac{3}{r}-1+\delta $ , where $\delta > 0$ . 
The data are assumed to fulfill (\ref{7'})-(\ref{15}). Then there exists $T > 0$,  $T=T(\|\phi_{\pm}(0)\|_{\widehat{H}^{s,r}},\|\nabla A_{j\pm}(0)\|_{\widehat{H}^{l-1,r}})$ , such that the problem (\ref{3.1})-(\ref{3.5c}) has a unique local solution
$$ \phi_{\pm} \in X^r_{s,b,\pm}[0,T]  \, , \,
\nabla A^{hom}_{\pm} \in X^r_{l-1,1-\epsilon_0,\pm}[0,T] \, , \, A^{inh}_{\pm} \in X^r_{l,1-\epsilon_0,\pm}[0,T] \, ,$$
 where $ b > \frac{1}{r}$ and $\epsilon_0$ is a small positive number. 
\end{theorem}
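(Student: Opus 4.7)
The plan is to treat the system (\ref{3.1})--(\ref{3.2}) with data (\ref{3.5a})--(\ref{3.5c}) by Picard iteration in the solution spaces $X^r_{s,b,\pm}[0,T]$ for $\phi_\pm$ and $X^r_{l,1-\epsilon_0,\pm}[0,T]$ for $A_\pm^{inh}$, feeding the free evolution part $A_\pm^{hom}$ (which is determined by the linear half-wave equation from the data) directly into the nonlinearity. Grünrock's abstract local well-posedness theorem for systems in $X^r_{s,b}$-spaces reduces the problem to establishing, uniformly in small $T>0$, a collection of multilinear estimates for each nonlinear term, together with the standard $b>1/r$ transfer principle for the linear inhomogeneous equation. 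Thus the task is to prove estimates of the schematic forms
\begin{align*}
\|A^\mu \partial_\mu \phi\|_{X^r_{s-1,b-1+\epsilon_0,\pm}} &\lesssim T^{\theta}\,\|A\|_{\,\cdot\,}\|\phi\|_{X^r_{s,b,\pm}},\\
\|A_\mu A^\mu \phi\|_{X^r_{s-1,b-1+\epsilon_0,\pm}} &\lesssim T^{\theta}\,\|A\|_{\,\cdot\,}^2\|\phi\|_{X^r_{s,b,\pm}},\\
\|\mathcal{N}(\phi,A)\|_{X^r_{l-1,-\epsilon_0,\pm}} &\lesssim T^{\theta}\,\bigl(\|\phi\|_{X^r_{s,b,\pm}}^2 + \|\phi\|_{X^r_{s,b,\pm}}^2\|A\|_{\,\cdot\,}\bigr),
\end{align*}
where the norm on $A$ is the sum of $X^r_{l,1-\epsilon_0,\pm}$ for the inhomogeneous part and the corresponding norm produced by the data for $A^{hom}_\pm$; a small positive power of $T$ comes from lowering the $b$-index by $\epsilon_0$ in the inhomogeneous estimate for the half-wave equation. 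Once these multilinear bounds are in hand, a standard contraction mapping argument on a ball in the product space yields the unique solution on the time interval $[0,T]$ with $T$ depending only on the stated data norms.

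The bilinear estimates will be proved by splitting every factor into its $\pm$ half-wave components and passing to frequency space, so that the integrand in each piece is controlled by the Foschi--Klainerman angular/resonance decomposition. The crucial structural fact, already exploited by Selberg--Tesfahun \cite{ST} in the $L^2$ setting, is that after the $\pm$-splitting the quadratic terms $A^\mu\partial_\mu\phi$ and the spatial components $\mathcal{N}_j-(\text{non-null piece})$ as well as the inhomogeneities driving $F_{\mu\nu}$ (cf.\ (\ref{27}),(\ref{28})) exhibit null symbols of the form $q_{\pm\pm}(\xi,\eta)=1\mp\frac{\xi}{|\xi|}\!\cdot\!\frac{\eta}{|\eta|}$, which pick up an extra gain measured against the minimal modulation of the three frequencies involved. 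Using Grünrock's $\widehat{L}^{r'}$-variants of the Foschi--Klainerman bilinear $L^2$-estimates (as already applied in \cite{G1} and \cite{GN}), each null-form estimate then reduces to a convolution bound that is sharp up to scaling and produces the regularity loss budget $s>\tfrac{5}{2r}-\half$ asymptotically.

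The main obstacle, and the reason for the gap $l<s$, is the non-null term $\operatorname{Im}(\phi\overline{\partial_j\phi})$ in $\mathcal{N}_j$, which does not benefit from any angular cancellation after $\pm$-splitting and hence must be estimated by a raw Foschi--Klainerman product bound. This is what forces the choice $l=\tfrac{3}{r}-1+\delta$ rather than $l=s$; the balance is exactly that the product estimate for two $\widehat{H}^{s,r}$-factors places the output in $\widehat{H}^{l-1,r}$ with $l-1=\tfrac{3}{r}-1+\delta-1$, consistent with a scaling-critical bilinear law. The remaining cubic terms $A_\mu A^\mu\phi$ and $A_j|\phi|^2$ are handled by iterating a bilinear product estimate with a Sobolev-type embedding of $X^r_{s,b,\pm}$ into an $L^p_tL^q_x$ or Fourier-Lebesgue space, which is comfortable once the bilinear machinery is established.

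Finally, having closed the fixed-point iteration and produced $\phi_\pm$ and $A_\pm$, I would recover $\phi$ and $A$ by $\phi=\phi_++\phi_-$, $A=A_++A_-$, and verify \emph{a posteriori} that the Lorenz condition (\ref{6'}) together with the compatibility conditions (\ref{12})--(\ref{15}) is preserved by the evolution; this is a consequence of applying $\partial^\mu$ to (\ref{16}) and using the Klein-Gordon equation (\ref{17}), as in \cite{ST}. Uniqueness in the stated class is immediate from the contraction. The overall delicate point in the whole argument is ensuring that the exponent counting in the null-form bilinear estimates, after inserting the Foschi--Klainerman angular factor and the $\widehat{L}^{r'}$-Hölder bookkeeping, really closes down to the advertised thresholds $s>\tfrac{5}{2r}-\half$ and $l>\tfrac{3}{r}-1$; I expect this to be the longest and most technical step, to be carried out case by case according to the relative sizes of the three frequencies and the three modulations.
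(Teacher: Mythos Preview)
Your outline is essentially correct and matches the paper's high-level strategy: reduce via the abstract local well-posedness theorem (Theorem~\ref{Theorem0.0}) to multilinear estimates
\[
\|\mathcal{M}\|_{X^r_{s-1,b-1+,\pm}} \lesssim R^2+R^3, \qquad \|\mathcal{N}\|_{X^r_{l-1,-\epsilon_0+,\pm}} \lesssim R^2+R^3,
\]
exploit the null structure of $A^\mu\partial_\mu\phi$ after the $\pm$-splitting, and accept the loss $l<s$ coming from the non-null term $\mathrm{Im}(\phi\overline{\partial_j\phi})$. (One small misplacement: the a~posteriori verification of the Lorenz condition belongs to Theorem~\ref{Theorem0.3}, not to the present statement, which concerns only the first-order system (\ref{3.1})--(\ref{3.5c}).)

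Where you diverge from the paper is in \emph{how} the multilinear estimates are obtained for the full range $1<r\le 2$. You propose to run the Foschi--Klainerman / Gr\"unrock machinery directly for every $r$, case-splitting on frequency and modulation sizes. The paper instead proves the needed estimates only at the endpoint $r=1+$ (Lemmas~\ref{Prop.1.1}--\ref{Prop.1.8}, all with output at $b$-index $0$), quotes the corresponding $r=2$ estimates from the author's earlier work \cite{P} (where the output sits at the genuinely negative $b$-index $b-1+\approx -\tfrac12+$), and then obtains the intermediate range by bilinear/trilinear complex interpolation (Lemmas~\ref{Prop.1.9}--\ref{Prop.1.11}). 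This buys two things: it avoids redoing the rather delicate $r=2$ analysis, and it automatically produces the correct output regularity $X^r_{s-1,b-1+}$ for all $r$, whereas the direct FK computations in Section~2 only place the output at $b=0$, which is sufficient near $r=1$ but would not by itself close the iteration near $r=2$. Your direct approach is in principle viable, but you would have to supplement the FK-type estimates with an argument handling the negative $b$-index on the output for larger $r$; the paper's interpolation sidesteps this entirely.
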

{\bf Remark:} The solution depends continuously on the initial data and persistence of higher regularity holds (see Theorem \ref{Theorem0.0} below). \\[0.5em]

In order to obtain the optimal regularity for $F_{\mu\nu}$ it is possible to derive from Maxwell's equations (\ref{1}) and (\ref{5}) the following wave equations, where we refer to \cite{S}, section 3.2 or \cite{P}, section 2.
\begin{align}
\label{27}
\square F_{k0} & =  Im(\partial_t \phi\overline{\partial_k \phi} - \partial_k \phi \overline{\partial_t \phi}) + \partial_t(A_k |\phi|^2) - \partial_k(A_0 |\phi|^2)
\end{align}
and
\begin{align}
\label{28}
\square F_{kl} & =Im(\partial_l \phi\overline{\partial_k \phi} - \partial_k \phi \overline{\partial_l \phi}) + \partial_l(A_k |\phi|^2) - \partial_k(A_l |\phi|^2) \, .
\end{align}

We prove as a consequence of this system or its equivalent first order (in t) system the second main result.
\begin{theorem}
\label{Theorem0.2}
Let $ 1 < r \le 2 $ and $ s = \frac{5}{2r} - \half+\delta $ , where $ \delta > 0 $, and $ b = \frac{1}{r}+$ . The data are assumed to fulfill (\ref{7'})-(\ref{15}).  Let $\phi$ , $A_{\mu}$
be the solution of Theorem \ref{Theorem0.1}. Then
$$ \nabla F_{\mu\nu} \,,\, \partial_t F_{\mu\nu} \in X^r_{s-2,b}[0,T]  \, . $$
\end{theorem}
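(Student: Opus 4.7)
The strategy is to exploit the wave equations (\ref{27})--(\ref{28}), whose quadratic parts in $\phi$ are the classical $Q_{\alpha\beta}$ null forms. I would first recast the system as first-order in $t$: set $F_{\mu\nu\pm}=\tfrac12(F_{\mu\nu}\pm(iD)^{-1}\partial_t F_{\mu\nu})$, so that
\begin{equation*}
(i\partial_t\pm D)F_{\mu\nu\pm}=\mp(2D)^{-1}\bigl(\text{null forms in }\phi+\partial_\gamma(A_\alpha|\phi|^2)\text{-terms}\bigr),
\end{equation*}
with data in $\widehat H^{s-1,r}$ via (\ref{8'}) and (\ref{15}). By the linear theory in the $X^r_{\sigma,b,\pm}$-spaces (cf.\ Gr\"unrock \cite{G}), the theorem reduces to bounding the right-hand side in $X^r_{s-2,b-1+\epsilon,\pm}[0,T]$, because then $F_{\mu\nu\pm}\in X^r_{s-1,b,\pm}[0,T]$, and consequently $\nabla F_{\mu\nu},\partial_t F_{\mu\nu}\in X^r_{s-2,b,+}[0,T]+X^r_{s-2,b,-}[0,T]\subset X^r_{s-2,b}[0,T]$, using $\|u\|_{X^r_{s-2,b}}\le\|u\|_{X^r_{s-2,b,\pm}}$ for $b\ge 0$.

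For the quadratic terms $\partial_\alpha\phi\,\overline{\partial_\beta\phi}-\partial_\beta\phi\,\overline{\partial_\alpha\phi}$ I would split $\phi=\phi_++\phi_-$ into half-wave pieces, exploit the standard symbol bound for the $Q_{\alpha\beta}$ symbol by the angle between the two spatial frequencies, and then invoke the Foschi--Klainerman bilinear null-form estimate in its Fourier--Lebesgue form developed by Gr\"unrock \cite{G1} and Grigoryan--Nahmod \cite{GN}. This should yield an estimate of the shape
\begin{equation*}
\bigl\|\partial_\alpha\phi_{\pm_1}\,\overline{\partial_\beta\phi_{\pm_2}}-\partial_\beta\phi_{\pm_1}\,\overline{\partial_\alpha\phi_{\pm_2}}\bigr\|_{X^r_{s-2,b-1+\epsilon,\pm}}\lesssim \|\phi_{\pm_1}\|_{X^r_{s,b,\pm_1}}\|\phi_{\pm_2}\|_{X^r_{s,b,\pm_2}},
\end{equation*}
at the regularity $s>\tfrac{5}{2r}-\tfrac12$ where the gain produced by the null structure exactly compensates for the two lost derivatives.

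The cubic terms $\partial_\gamma(A_\alpha|\phi|^2)$ are controlled by a trilinear product estimate: the outer derivative is absorbed into the $s-2$ Sobolev order, and the product $A_\alpha\phi\,\bar\phi$ is estimated in $X^r_{s-1,b-1+\epsilon,\pm}$ by iterating the Fourier--Lebesgue bilinear product estimates, using $A\in X^r_{l,1-\epsilon_0,\pm}[0,T]$ with $l>\tfrac3r-1$ and $\phi\in X^r_{s,b,\pm}[0,T]$ with $s>\tfrac{5}{2r}-\tfrac12$; since no cancellation is needed, this is comfortable in the range $1<r\le 2$.

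The main obstacle is the quadratic null-form estimate at the low regularity level $s-2$: the critical configurations are the parallel high-high and the high-low interactions, and to drive the estimate down to $s=\tfrac{5}{2r}-\tfrac12+$ one must combine the full Foschi--Klainerman angular/resonance decomposition with Hausdorff--Young applied on the dual exponent $r'$ instead of $L^2$, distributing the angular gain delicately between the two input frequencies and the modulation variable. Once this bilinear null-form estimate is established, the remainder of the proof is routine bookkeeping.
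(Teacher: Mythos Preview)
Your overall strategy --- pass to the first-order system for $F_{\mu\nu\pm}$, apply the linear theory, and bound the right-hand side of (\ref{27})--(\ref{28}) in $X^r_{s-2,b-1+}$ --- is exactly what the paper does. But there are two points where your sketch departs from or falls short of the actual argument.

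\textbf{The initial data for $\partial_t F_{\mu\nu}$.} You write ``with data in $\widehat H^{s-1,r}$ via (\ref{8'}) and (\ref{15})'', but neither of these gives you $\partial_t F_{\mu\nu}(0)\in\widehat H^{s-2,r}$, which you need for $F_{\mu\nu\pm}(0)\in\widehat H^{s-1,r}$. Condition (\ref{8'}) only controls $F^0_{\mu\nu}$ itself, and (\ref{15}) is a constraint on $\partial^k F^0_{0k}$. The paper devotes most of its proof to this point: from the Maxwell equation one has $\partial_t F_{0k}=-\partial^l F_{kl}+j_k$ and from the definition of $F$ one has $\partial_t F_{jk}=\partial_j F_{0k}-\partial_k F_{0j}$, so one must place $j_k(0)=\operatorname{Im}(\phi_0\overline{\partial_k\phi_0})+|\phi_0|^2 a_{0k}$ into $\widehat H^{s-2,r}$. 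This requires product estimates at the data level (done at $r=1+$ and $r=2$ and interpolated). You have not addressed this, and it is not automatic.

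\textbf{The nonlinear estimates: direct versus interpolation.} For the null-form and cubic bounds you propose to run the Foschi--Klainerman machinery directly on $L^{r'}$ for every $1<r\le 2$, ``distributing the angular gain delicately''. The paper takes a shorter route: it proves the needed estimates only at $r=1+$ (Lemmas \ref{Prop. 1.7} and \ref{Prop.1.8}) using the FK integrals, invokes the already-known $r=2$ case from \cite{P}, and obtains the full range by bilinear/trilinear complex interpolation (Lemma \ref{Prop.1.12}). Your direct approach may well succeed, but the interpolation shortcut avoids having to track the FK exponents uniformly in $r$. Note also that your handling of $\partial_t(A_k|\phi|^2)$ by ``absorbing the outer derivative into the $s-2$ Sobolev order'' is not quite right: the $X^r$ weight is $\langle\xi\rangle^{s-2}$, not $\langle(\tau,\xi)\rangle^{s-2}$, so a time derivative does not drop a Sobolev order for free. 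The paper instead controls the $\tau$-weight directly inside the FK integral (elliptic region: $\tau\ge|\xi|$; hyperbolic region: $|\tau|\le|\xi|$).
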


The final result as a consequence of Theorem \ref{Theorem0.1} and Theorem \ref{Theorem0.2} reads as follows.
\begin{theorem}
\label{Theorem0.3} 
Let $1 < r \le 2$ , and assume $ s = \frac{5}{2r}-\half+\delta$ , $ l = \frac{3}{r}-1+\delta $ , where $\delta > 0$ . 
The data are assumed to fulfill (\ref{7'})-(\ref{15}). Then there exists $T > 0$  , $ b > \frac{1}{r} $, such that the problem (\ref{1})-(\ref{5}) with Lorenz condition (\ref{6}) and Cauchy data (\ref{7}),(\ref{8})  has a unique local solution
$$ \phi \in X^r_{s,b,+}[0,T]+X^r_{s,b,-}[0,T]  \, , \, \partial_t \phi \in X^r_{s-1,b,+}[0,T]+X^r_{s-1,b,-}[0,T] $$
and
$$\nabla F_{\mu\nu} \,,\, \partial_t F_{\mu\nu} \in X^r_{s-2,b}[0,T] $$
relative to a potential $A=(A_0,A_1,A_2,A_3)$ , where $A=A_+^{hom}+A_-^{hom} + A_+^{inh} + A_-^{inh}$ with
$\nabla A^{hom}_{\pm} \in X^r_{l-1,1-\epsilon_0,\pm}[0,T] \, , \, A^{inh}_{\pm} \in X^r_{l,1-\epsilon_0,\pm}[0,T] \, ,$
 where $\epsilon_0$ is a small positive number. 
\end{theorem}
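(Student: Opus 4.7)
The plan is to deduce Theorem~\ref{Theorem0.3} from Theorems~\ref{Theorem0.1} and~\ref{Theorem0.2} in three steps: first produce $(\phi,A)$ solving the reformulated wave equation system (\ref{16}),(\ref{17}); second, verify that the Lorenz condition (\ref{6}) actually propagates from the initial slice to all of $[0,T]$; finally, read off the original Maxwell--Klein--Gordon system (\ref{1})--(\ref{5}) from the equivalence recorded in the introduction and invoke Theorem~\ref{Theorem0.2} for the improved regularity of $F_{\mu\nu}$. For the first step I apply Theorem~\ref{Theorem0.1} with the prescribed parameters $s=\frac{5}{2r}-\half+\delta$ and $l=\frac{3}{r}-1+\delta$ to the data (\ref{7'})--(\ref{15}); this yields a unique $\phi=\phi_++\phi_-$ together with a potential $A=A^{hom}_++A^{hom}_-+A^{inh}_++A^{inh}_-$ of exactly the $X^r$-regularity asserted in the theorem and solving (\ref{16}),(\ref{17}) (equivalently (\ref{3.1})--(\ref{3.5c})). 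I then set $F_{\mu\nu}:=\partial_\mu A_\nu-\partial_\nu A_\mu$; the compatibility relations (\ref{13}),(\ref{14}) together with (\ref{10}) ensure $F_{\mu\nu}(0)=F^0_{\mu\nu}$.

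The central step, and the one I expect to be the main obstacle, is showing that $\psi:=\partial^\mu A_\mu$ vanishes identically on $[0,T]$. From (\ref{10}) and (\ref{12}) one reads off $\psi(0)=-\dot a_{00}+\partial^j a_{0j}=0$. Differentiating once in time, eliminating $\partial_t^2 A_0(0)$ via (\ref{16}) with $\mu=0$ and using (\ref{14}),(\ref{15}), one finds
\begin{equation*}
(\partial_t\psi)(0) = -j_0(0)+\partial^j\dot a_{0j} = -Im(\phi_0\overline{\phi_1})+\partial^j F^0_{0j}=0.
\end{equation*}
A direct calculation, relying on (\ref{17}) to replace $\Box\phi$ inside $Im(\phi\,\overline{\Box\phi})$ and on the fact that $\partial^\mu\phi\,\overline{\partial_\mu\phi}$ is real, yields the pointwise identity $\partial^\mu j_\mu=|\phi|^2\psi$; consequently $\psi$ solves the linear homogeneous Cauchy problem
\begin{equation*}
\Box\psi + |\phi|^2\psi = 0,\qquad \psi(0)=(\partial_t\psi)(0)=0
\end{equation*}
on $[0,T]\times\R^3$. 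Because $\phi\in X^r_{s,b,+}[0,T]+X^r_{s,b,-}[0,T]$ with $b>\frac{1}{r}$, the coefficient $|\phi|^2$ has enough spacetime regularity for a standard energy/iteration argument to produce uniqueness for this linear Cauchy problem, which then forces $\psi\equiv 0$. Checking that this uniqueness argument really goes through at the available low regularity in the Fourier--Lebesgue scale is the delicate part of the proof.

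Once (\ref{6}) is known on $[0,T]$, the converse computation sketched in the introduction immediately gives $\partial^\nu F_{\mu\nu}=-\Box A_\mu=j_\mu$, i.e.\ (\ref{1}), and (\ref{17}) collapses to $D^{(A)}_\mu D^{(A)\mu}\phi=m^2\phi$, i.e.\ (\ref{2}); the constitutive relations (\ref{3})--(\ref{5}) hold by construction. The regularity of $\phi$, $\partial_t\phi$, and of the split potential $A$ are inherited directly from Theorem~\ref{Theorem0.1}, while Theorem~\ref{Theorem0.2} applied to the same solution delivers $\nabla F_{\mu\nu},\partial_t F_{\mu\nu}\in X^r_{s-2,b}[0,T]$. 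For uniqueness, any other solution $(\tilde\phi,\tilde A,\tilde F_{\mu\nu})$ of (\ref{1})--(\ref{5}) in Lorenz gauge in the stated class automatically satisfies (\ref{16}),(\ref{17}) with the same data, hence coincides with $(\phi,A)$ by Theorem~\ref{Theorem0.1}, and $\tilde F_{\mu\nu}=F_{\mu\nu}$ then follows from $\tilde F_{\mu\nu}=\partial_\mu\tilde A_\nu-\partial_\nu\tilde A_\mu$.
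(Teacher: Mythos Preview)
Your proposal is correct and follows essentially the same route as the paper: construct $(\phi,A)$ via Theorem~\ref{Theorem0.1}, verify that the Lorenz condition propagates (so that the reformulated system (\ref{16}),(\ref{17}) is equivalent to (\ref{1})--(\ref{5})), and then invoke Theorem~\ref{Theorem0.2} for the regularity of $F_{\mu\nu}$. The paper itself omits the Lorenz-propagation computation you carry out, citing \cite{P}, Section~6 and \cite{ST}, Section~5 instead; your derivation of $\Box\psi+|\phi|^2\psi=0$ with vanishing Cauchy data is exactly the argument found there.

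One remark on the point you flag as delicate: rather than proving uniqueness for the linear equation $\Box\psi+|\phi|^2\psi=0$ directly at the low Fourier--Lebesgue regularity, the standard and cleaner route (used in \cite{ST} and implicitly in \cite{P}) is to exploit the persistence of higher regularity and continuous dependence on data recorded after Theorem~\ref{Theorem0.1}. Approximate the data by smooth data, for which the Lorenz condition propagates by the classical computation you wrote out, and pass to the limit; this sidesteps any subtlety about low-regularity uniqueness for the auxiliary linear problem.
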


A consequence of Theorem \ref{Theorem0.3} is
\begin{Cor}
\label{Corollary}
$\phi $ and $ F_{\mu \nu}$ have the regularity
\begin{align*}
\phi& \in C^0([0,T],\widehat{H}^{s,r}(\R^3)) \cap C^1([0,T],\widehat{H}^{s-1,r}(\R^3))      \, , \\
F_{\mu \nu} &\in C^0([0,T],\widehat{H}^{s-1,r}(\R^3)) \cap C^1([0,T],\widehat{H}^{s-2,r}(\R^3)) \, . 
\end{align*}
\end{Cor}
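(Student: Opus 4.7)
The plan is to deduce the continuity-in-time statements from the spacetime $X^r_{s,b}$-regularity furnished by Theorem \ref{Theorem0.3} via the generalized Sobolev embedding
$$ X^r_{s,b,\pm}[0,T] \hookrightarrow C^0([0,T],\widehat{H}^{s,r}(\R^3)) \qquad (b > 1/r), $$
together with its unsigned counterpart for $X^r_{s,b}[0,T]$. This is the Fourier-Lebesgue analogue of the classical embedding $X^{s,b} \hookrightarrow C_t H^s$ for $b > \tfrac{1}{2}$; it follows at each fixed $\xi$ from the observation that $\langle\tau\pm|\xi|\rangle^b \widehat\phi(\cdot,\xi) \in L^{r'}_\tau$ together with $\langle\sigma\rangle^{-b} \in L^r_\sigma$ (which requires $b > 1/r$, by H\"older) yields $\widehat\phi(\cdot,\xi) \in L^1_\tau$, whence $t \mapsto \mathcal{F}_x\phi(t,\xi)$ lies in $C_b(\mathbb{R}_t)$. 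The unsigned version requires the additional step of splitting the spacetime Fourier transform into its $\tau > 0$ and $\tau < 0$ halves, on which $\langle|\tau|-|\xi|\rangle$ agrees with $\langle\tau \mp |\xi|\rangle$ respectively, and applying the signed embedding to each half.

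Given this embedding, the regularity of $\phi$ is immediate: Theorem \ref{Theorem0.3} gives $\phi \in X^r_{s,b,+}[0,T] + X^r_{s,b,-}[0,T]$ and $\partial_t\phi \in X^r_{s-1,b,+}[0,T] + X^r_{s-1,b,-}[0,T]$ with $b > 1/r$, so $\phi \in C^0_t \widehat{H}^{s,r}$ and $\partial_t\phi \in C^0_t \widehat{H}^{s-1,r}$, yielding the claimed $C^0 \cap C^1$ structure. Likewise the $C^1$-part of the claim for $F_{\mu\nu}$ follows directly from $\partial_t F_{\mu\nu} \in X^r_{s-2,b}[0,T] \hookrightarrow C^0_t \widehat{H}^{s-2,r}$.

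The one remaining step is the bound $F_{\mu\nu} \in C^0_t \widehat{H}^{s-1,r}$ itself, since Theorem \ref{Theorem0.3} only controls its spacetime \emph{derivatives}. I would decompose $F_{\mu\nu} = P_{\le 1}F_{\mu\nu} + P_{> 1}F_{\mu\nu}$ into low and high spatial frequencies. On the high-frequency piece $\langle\xi\rangle^{s-1} \lesssim |\xi|\langle\xi\rangle^{s-2}$ gives
$$ \|P_{>1}F_{\mu\nu}(t)\|_{\widehat{H}^{s-1,r}} \lesssim \|\nabla F_{\mu\nu}(t)\|_{\widehat{H}^{s-2,r}}, $$
whose right-hand side is continuous in $t$ by the embedding. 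On the low-frequency piece $\langle\xi\rangle^{s-1} \sim 1 \sim \langle\xi\rangle^{s-2}$, and the fundamental theorem of calculus
$$ P_{\le 1}F_{\mu\nu}(t) = P_{\le 1}F^0_{\mu\nu} + \int_0^t P_{\le 1}(\partial_s F_{\mu\nu})(s)\,ds $$
yields continuity into $\widehat{H}^{s-1,r}$, using the datum $F^0_{\mu\nu} \in \widehat{H}^{s-1,r}$ from (\ref{8'}) and $\partial_t F_{\mu\nu} \in C^0_t \widehat{H}^{s-2,r}$ from the embedding. The main (modest) obstacle is thus simply the verification of the embedding lemma in the Fourier-Lebesgue setting; the rest is elementary bookkeeping.
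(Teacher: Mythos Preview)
Your argument is correct and follows the same route as the paper: invoke the embedding (\ref{102}) to get $\phi\in C^0_t\widehat{H}^{s,r}$ and $\nabla F_{\mu\nu},\partial_t F_{\mu\nu}\in C^0_t\widehat{H}^{s-2,r}$, then recover $F_{\mu\nu}\in C^0_t\widehat{H}^{s-1,r}$ from the initial datum $F^0_{\mu\nu}\in\widehat{H}^{s-1,r}$ together with the continuity of the derivatives. Your version is in fact more explicit than the paper's three-line proof --- you spell out the high/low frequency split and the fundamental-theorem-of-calculus step for the low frequencies, and you correctly note that the unsigned embedding $X^r_{s,b}\hookrightarrow C^0_t\widehat{H}^{s,r}$ needs the extra $\tau\gtrless 0$ splitting, a point the paper glosses over when applying (\ref{102}) to $\nabla F_{\mu\nu},\partial_t F_{\mu\nu}\in X^r_{s-2,b}$.
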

\begin{proof}
By (\ref{102}) below we immediately obtain $\phi \in C^0([0,T],\widehat{H}^{s,r})$ and $\nabla F_{\mu \nu}$ , $\partial_t F_{\mu \nu} \in C^0([0,T],\widehat{H}^{s-2,r})$. Using $F_{\mu \nu}(0) \in \widehat{H}^{s-1,r}$ this implies also $F_{\mu \nu} \in C^0([0,T],$ $\widehat{H}^{s-2,r})$ , which gives the claimed regularity of $F_{\mu \nu}$.
\end{proof}

For the following basic properties of $X^r_{s,b}$-spaces as well as a general local well-posedness theorem for nonlinear systems we refer to Gr\"unrock \cite{G}.

The transfer principle is the following result (cf. \cite{ KS}, Prop. 3.5).
\begin{prop}
\label{Prop.0.1}
If $ T: \widehat{H}^{s_1,r} \times ... \times \widehat{H}^{s_k,r} \to \widehat{H}^{s,r}$ is k-linear operator, $b > \frac{1}{r}$ and
$$ \|T(e^{\pm_1 itD} f_1,...,e^{\pm_k itD} f_k)\|_{\widehat{L}^p_t(\widehat{L}^q_x)} \le c \|f_1\|_{\widehat{H}^{s_1,r}} ... \|f_k\|_{\widehat{H}^{s_k,r}} $$
for all signs $\pm_1,...,\pm_k$ , where $\widehat{L}^r= \widehat{H}^{0,r}$ , then
$$ \|T(u_1,...,u_k)\|_{\widehat{L}^p_t(\widehat{L}^q_x)} \le c \|u_1\|_{X^r_{s_1,b}} ... \|u_k\|_{X^r_{s_k,b}} \, \forall \, u_j \in X^r_{s_j,b} \, . $$
\end{prop}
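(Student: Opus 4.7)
The plan is to reduce the claimed multilinear estimate on the $X^r_{s,b}$-spaces to the hypothesized free-wave estimate by representing each input as a weighted Fourier superposition of free waves, this being the Fourier-Lebesgue analogue of the classical transfer principle for $L^2$-based $X^{s,b}$-spaces. First I would decompose each $u_j=u_j^++u_j^-$ according to the sign of $\tau$, with $\widehat{u_j^\pm}$ supported in $\{\pm\tau>0\}$. On those half-spaces one has $|\tau|-|\xi|=\pm\tau-|\xi|$, so that $\|u_j^+\|_{X^r_{s_j,b}}=\|u_j^+\|_{X^r_{s_j,b,-}}$ and $\|u_j^-\|_{X^r_{s_j,b}}=\|u_j^-\|_{X^r_{s_j,b,+}}$, both bounded by $\|u_j\|_{X^r_{s_j,b}}$. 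This reduces the task to a transfer principle for each of the spaces $X^r_{s_j,b,\pm}$ separately.

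For a single sign component, say $u_j^+\in X^r_{s_j,b,-}$, the substitution $\sigma_j=\tau_j-|\xi_j|$ in the inverse Fourier transform gives the representation
\begin{equation*}
u_j^+(t,x) = c\int_{\mathbb{R}} e^{it\sigma_j}\,\langle\sigma_j\rangle^{-b}\bigl(e^{itD}f_{j,\sigma_j}\bigr)(x)\,d\sigma_j,
\end{equation*}
where $\widehat{f_{j,\sigma_j}}(\xi)=\langle\xi\rangle^{-s_j}\langle\sigma_j\rangle^{b}\widehat{u_j^+}(\sigma_j+|\xi|,\xi)$. By the change-of-variables formula
\begin{equation*}
\Bigl\|\,\|f_{j,\sigma_j}\|_{\widehat{H}^{s_j,r}}\Bigr\|_{L^{r'}_{\sigma_j}} = \|u_j^+\|_{X^r_{s_j,b,-}} \le \|u_j\|_{X^r_{s_j,b}},
\end{equation*}
with an analogous formula for $u_j^-$ in which $e^{itD}$ is replaced by $e^{-itD}$. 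Inserting these representations into $T$ and expanding by $k$-linearity produces $2^k$ terms, one per sign combination $(\pm_1,\ldots,\pm_k)$, of the form
\begin{equation*}
c\int\!\cdots\!\int e^{it(\sigma_1+\cdots+\sigma_k)}\prod_{j=1}^k\langle\sigma_j\rangle^{-b}\,T\bigl(e^{\pm_1 itD}f^{(1)}_{\sigma_1},\ldots,e^{\pm_k itD}f^{(k)}_{\sigma_k}\bigr)(x)\,d\sigma_1\cdots d\sigma_k.
\end{equation*}

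The purely temporal modulation $e^{it(\sigma_1+\cdots+\sigma_k)}$ acts as a translation in the time-Fourier variable and is therefore invisible to the norm $\widehat{L}^p_t(\widehat{L}^q_x)$. Minkowski's integral inequality then pulls this norm inside the $\sigma$-integrals; the hypothesis (applied for the appropriate sign pattern, which is why all $2^k$ sign combinations are assumed) bounds the integrand by $C\prod_j\|f^{(j)}_{\sigma_j}\|_{\widehat{H}^{s_j,r}}$. Fubini followed by H\"older in each $\sigma_j$ yields
\begin{equation*}
\int\langle\sigma_j\rangle^{-b}\,\|f^{(j)}_{\sigma_j}\|_{\widehat{H}^{s_j,r}}\,d\sigma_j \le \|\langle\sigma\rangle^{-b}\|_{L^{r}_\sigma}\,\|u_j\|_{X^r_{s_j,b}},
\end{equation*}
in which the first factor is finite precisely because $br>1$, i.e.\ $b>1/r$. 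Summing over the $2^k$ sign combinations delivers the claimed inequality.

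The only technical subtlety is the justification of the Minkowski and Fubini interchanges at the level of the mixed Fourier-Lebesgue norm $\widehat{L}^p_t(\widehat{L}^q_x)$; this is routine for Schwartz inputs and passes to the general case by density, since $X^r_{s_j,b}$ is defined as the completion of $\mathcal{S}(\mathbb{R}^{n+1})$. The role of the assumption $b>1/r$ is thus exactly to replace the classical $b>1/2$ threshold, and no further structural difference with the $L^2$-based transfer principle appears.
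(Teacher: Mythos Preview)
Your argument is the standard proof of the transfer principle, correctly adapted from the $L^2$ setting to Fourier--Lebesgue spaces; the key adjustment --- replacing the Cauchy--Schwarz step in each $\sigma_j$ by H\"older with exponents $(r,r')$, which is what forces the threshold $b>1/r$ --- is exactly right. The paper itself gives no proof of this proposition but merely cites \cite{KS}, Prop.~3.5, and \cite{G}; your write-up is precisely the natural generalization of the argument found there.

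One small slip: in your definition of $f_{j,\sigma_j}$ the factor $\langle\xi\rangle^{-s_j}$ must be deleted. With it present the displayed representation formula yields $\Lambda^{-s_j}u_j^+$ rather than $u_j^+$, and the subsequent norm identity would produce $\|u_j^+\|_{X^r_{0,b,-}}$ instead of $\|u_j^+\|_{X^r_{s_j,b,-}}$. Taking $\widehat{f_{j,\sigma_j}}(\xi)=\langle\sigma_j\rangle^{b}\,\widehat{u_j^+}(\sigma_j+|\xi|,\xi)$ makes both the representation and the norm identity correct as written, and the rest of your argument goes through unchanged.
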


For general phase functions $\phi: \R^n \to \R$ (in our case $\phi(\xi) = \pm |\xi|$) we define
$$ \|f\|_{X^r_{s,b,\phi}} = \| \langle \xi \rangle^s \langle \tau - \phi(\xi)\rangle \|_{L^{r'}_{\tau \xi}} \, , $$
where $1 < r < \infty$ , $\frac{1}{r}+\frac{1}{r'} =1$ , $ b \in \R $ . Then these spaces have the following properties:
\begin{equation}
\label{102}
(X^r_{s,b,\phi})^* = X^{r'}_{-s,-b,\phi} \quad , \quad X^r_{s,b,\phi} \subset C^0(\R,\widehat{H}^{s,r}) \quad{\mbox for}\,\, b > \frac{1}{r} \,, 
\end{equation}
$(X^{r_0}_{s_0,b_0,\phi},X^{r_1}_{s_1,b_1\phi})_{[\Theta]} = X^r_{s,b,\phi}$ (complex interpolation space) , where $\frac{1}{r}= \frac{1-\Theta}{r_0} + \frac{\Theta}{r_1}$, $s = (1-\Theta)s_0+\Theta s_1$ , $ b= \frac{1-\Theta}{s_0} + \frac{\Theta}{b_1}$ , $0 < \Theta < 1$ .

The following general local well-posedness theorem is an obvious generalization of \cite{G}, Thm. 1.
\begin{theorem}
\label{Theorem0.0}
Assume that for $s\in\R$ , $1<r<\infty$ there exists $ b > \frac{1}{r}$ such that
$$ \|N(u)\|_{X^r_{s,b-1+,\phi}} \le \omega( \|u\|_{X^r_{s,b,\phi}})  \|u\|_{X^r_{s,b,\phi}}\, ,$$
where $N$ is a smooth function, and
$$ \|N(u)-N(v)\|_{X^r_{s,b-1+,\phi}} \le \omega_1 (\|u\|_{X^r_{s,b,\phi}}+\|v\|_{X^r_{s,b,\phi}}) \|u-v\|_{X^r_{s,b,\phi}} \, .$$
Here $\omega$ and $\omega_1$ are increasing smooth functions.
Then the Cauchy problem
$$ \partial_t u - i \phi(D) u = N(u) \quad , \quad u(0) = u_0 \in \widehat{H}^{s,r} $$
has a unique local solution $ u \in X^r_{s,b,\phi}[0,T]$ , where $T=T(\|u_0\|_{\widehat{H}^{s,r}}) > 0$ . This solution depends locally lipschitzian on the data and higher regularity is preserved.
\end{theorem}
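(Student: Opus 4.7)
The proof proceeds by a standard contraction mapping argument in the space $X^r_{s,b,\phi}[0,T]$, adapted from the $L^2$-based case ($r=2$) to general Fourier-Lebesgue exponents. First I would recast the Cauchy problem in its Duhamel form
\begin{equation*}
u(t) = e^{it\phi(D)} u_0 + i\int_0^t e^{i(t-s)\phi(D)} N(u(s))\,ds =: \Phi(u)(t),
\end{equation*}
and then seek a fixed point of $\Phi$ in a closed ball $B_R \subset X^r_{s,b,\phi}[0,T]$ of suitable radius $R$ (comparable to $\|u_0\|_{\widehat{H}^{s,r}}$) and sufficiently small $T$.

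The key technical ingredients are two linear estimates. The first is the homogeneous transfer estimate, which after multiplying by a smooth temporal cutoff $\psi(t/T)$ yields
\begin{equation*}
\bigl\|\psi(t/T)\,e^{it\phi(D)} u_0\bigr\|_{X^r_{s,b,\phi}} \le C\,\|u_0\|_{\widehat{H}^{s,r}},
\end{equation*}
valid for any $b\in\R$, with the constant independent of $T\le 1$. The second, and more delicate, is the inhomogeneous estimate: for $b>\tfrac{1}{r}$ and a small $\epsilon>0$ one has
\begin{equation*}
\Bigl\|\int_0^t e^{i(t-s)\phi(D)} F(s)\,ds\Bigr\|_{X^r_{s,b,\phi}[0,T]} \le C\,T^{\epsilon}\,\|F\|_{X^r_{s,b-1+\epsilon,\phi}[0,T]},
\end{equation*}
where the gain $T^{\epsilon}$ comes from trading the excess $\epsilon$ in the weight $\langle\tau-\phi(\xi)\rangle$ against a narrow time window. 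This is precisely the Fourier-Lebesgue analogue of the classical $X^{s,b}$ time-localization lemma; it is proved by decomposing the Duhamel integral according to the size of $\langle\tau-\phi(\xi)\rangle$ and applying Hausdorff-Young and Minkowski in the $\widehat{L}^{r'}$ setting, exactly as in Gr\"unrock's argument.

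Combining these with the hypothesized nonlinear estimate
\begin{equation*}
\|N(u)\|_{X^r_{s,b-1+,\phi}} \le \omega(\|u\|_{X^r_{s,b,\phi}})\,\|u\|_{X^r_{s,b,\phi}},
\end{equation*}
one gets $\|\Phi(u)\|_{X^r_{s,b,\phi}[0,T]} \le C\|u_0\|_{\widehat{H}^{s,r}} + C T^{\epsilon}\omega(R)R$, and similarly for differences of two iterates via the Lipschitz version controlled by $\omega_1$. Choosing $R := 2C\|u_0\|_{\widehat{H}^{s,r}}$ and then $T>0$ so small that $CT^{\epsilon}\omega(R) \le \tfrac{1}{2}$ and $CT^{\epsilon}\omega_1(2R) \le \tfrac{1}{2}$, $\Phi$ becomes a contraction on $B_R$, producing the unique local solution. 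Uniqueness in the full space $X^r_{s,b,\phi}[0,T]$ follows by a standard continuity-in-$T$ argument, and Lipschitz dependence on $u_0$ is immediate from the same estimates applied to the difference of two solutions.

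The main obstacle to expect is purely the inhomogeneous time-localization estimate with the $T^{\epsilon}$ gain: all other steps are routine once this is in hand. Persistence of higher regularity is handled by applying the argument with $s$ replaced by any $s'>s$, using that the nonlinear estimate scales tame-linearly in the highest norm and that the contraction lifetime $T$ depends only on $\|u_0\|_{\widehat{H}^{s,r}}$, so that a priori bounds in the higher-regularity norm can be propagated on the already-constructed existence interval via Gronwall applied to the difference between the solution and its truncated iterates.
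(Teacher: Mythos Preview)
The paper does not actually supply a proof of this theorem: it is stated as ``an obvious generalization of \cite{G}, Thm.~1'' and no argument is given. Your proposal correctly reconstructs the standard contraction mapping scheme that underlies Gr\"unrock's result---Duhamel formulation, the homogeneous estimate for the free evolution, the inhomogeneous time-localization estimate with a $T^{\epsilon}$ gain (this is indeed the only nontrivial ingredient and is exactly what Gr\"unrock establishes in the $\widehat{L}^{r'}$ setting), and then the routine fixed-point, uniqueness, Lipschitz dependence, and persistence steps---so your approach is precisely the one the paper defers to.
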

\noindent{\bf Remark:} This theorem can be generalized to systems of equations in a straightforward manner, especially to the Maxwell-Klein-Gordon system in the form (\ref{3.1}),(\ref{3.2}).

\section{Estimates for the nonlinearities}
The null forms of Klainerman-Machedon are defined as follows:
\begin{align*}
Q_{ij}(u,v) &= \partial_i u \partial_j v - \partial_j u \partial_i v   &q_{ij}(D u,D v)= Q_{ij}(u,v) \, ,\\
Q_{0j}(u,v) &= \partial_t u \partial_j v - \partial_j u \partial_t v   &q_{0j}(D u, D v) = Q_{0j}(u,v) \, ,\\
Q_0(u,v) &= \partial_t u \partial_t v - \nabla u \cdot \nabla v   &q_0(D u, D v) = Q_0(u,v) \, .
\end{align*}

\begin{lemma}
\label{Prop.1.1}
Let $1 < r \le 2$ , $b > \frac{1}{r} $  , $\alpha_1+\alpha_2 > \frac{3}{r}-1$ , $\alpha_0,\alpha_1,\alpha_2 \ge 0$ , $\alpha_1+\alpha_2-\alpha_0 = \frac{2}{r}$ , $\alpha_1,\alpha_2 \le \frac{2}{r}$ and $\alpha_1,\alpha_2 \neq \frac{2}{r}-\half$ . The following estimate applies
$$ \|q_{ij}(u,v)\|_{\dot{X}^r_{\alpha_0,0}} \lesssim \|u\|_{\dot{X}^r_{\alpha_1,b}} \|v\|_{\dot{X}^r_{\alpha_2,b}} \, . $$
\end{lemma}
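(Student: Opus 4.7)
The strategy is to convert the null-structure gain into a fractional modulation weight and then invoke product bilinear estimates in Fourier--Lebesgue spaces. As a preliminary reduction, I would split each factor by the sign of $\tau$, writing $u = u_+ + u_-$ with $\widehat{u_\pm}$ supported in $\{\pm\tau \ge 0\}$, and similarly $v = v_+ + v_-$; by the triangle inequality and the embedding $\dot{X}^r_{\alpha,b} \subset \dot{X}^r_{\alpha,b,+} + \dot{X}^r_{\alpha,b,-}$ it suffices to prove the estimate with $u,v$ replaced by such $\pm$-localized pieces. The Fourier symbol of $q_{ij}$ satisfies $\bigl|\tfrac{\eta_i(\xi-\eta)_j-\eta_j(\xi-\eta)_i}{|\eta|\,|\xi-\eta|}\bigr| \lesssim \angle(\pm_1\eta,\pm_2(\xi-\eta))$, so the null-form cancellation reduces to a pure angle bound between the two (signed) spatial input frequencies.

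Next I would apply the standard angle-vs.-modulation inequality
$$\angle\bigl(\pm_1\eta,\pm_2(\xi-\eta)\bigr)^{2}\lesssim\frac{\langle\rho\mp_1|\eta|\rangle+\langle\tau-\rho\mp_2|\xi-\eta|\rangle+\langle\tau\mp_1|\eta|\mp_2|\xi-\eta|\rangle}{\min(|\eta|,|\xi-\eta|)},$$
where $\rho$ and $\tau-\rho$ are the temporal frequencies of $u$ and $v$. Splitting into three cases according to which of the three modulations dominates, each case lowers one of the three modulation weights by $\half$ and produces a loss $\min(|\eta|,|\xi-\eta|)^{-1/2}$ which is absorbed by the spatial regularities $\alpha_1,\alpha_2$. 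After dyadic decomposition in frequency, the problem reduces to a handful of product bilinear estimates of the form $\|uv\|_{\dot{X}^r_{\alpha_0',0}} \lesssim \|u\|_{\dot{X}^r_{\alpha_1',b_1}}\|v\|_{\dot{X}^r_{\alpha_2',b_2}}$, with $b_1,b_2 \in [0,b]$ and with one of the three indices shifted by $\half$ (and the others unchanged) relative to $(\alpha_0,\alpha_1,\alpha_2)$.

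These product estimates are the Fourier--Lebesgue generalizations of the Foschi--Klainerman \cite{FK} bilinear bounds, carried out by Gr\"unrock \cite{G1} and Grigoryan--Nahmod \cite{GN}; I would invoke them via the transfer principle (Proposition \ref{Prop.0.1}). The equality $\alpha_1+\alpha_2-\alpha_0 = \tfrac{2}{r}$ places the estimate on the scaling line of the null form; the upper bounds $\alpha_i \le \tfrac{2}{r}$ confine the indices to the range where the Gr\"unrock estimates apply; and the strict inequality $\alpha_1+\alpha_2 > \tfrac{3}{r}-1$ together with $b > \tfrac{1}{r}$ furnishes the small $\epsilon$ of slack needed for dyadic summation over high-low and low-high interactions. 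The excluded value $\alpha_i = \tfrac{2}{r}-\half$ corresponds to a known logarithmic endpoint of the underlying wave-Strichartz bound at the $\widehat{L}^r$-level.

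The most delicate point I expect is the case in which the half-modulation weight lands on the output ($\alpha_0$) side: there one effectively needs $\|uv\|_{\dot{X}^r_{\alpha_0-\half,0}} \lesssim \|u\|_{\dot{X}^r_{\alpha_1,b}}\|v\|_{\dot{X}^r_{\alpha_2,b}}$, which sits at a more sensitive point of the Foschi--Klainerman--Gr\"unrock admissibility polygon. Verifying admissibility of this shifted version, performing the dyadic summation on the borderline $\alpha_1+\alpha_2 \to \tfrac{3}{r}-1$, and (under the hood) extending the polar-coordinate/angular-decomposition arguments of \cite{FK} from $L^2$ to $\widehat L^r$ together constitute the technical heart of the proof; this is precisely where the hypotheses $\alpha_1+\alpha_2 > \tfrac{3}{r}-1$ and $\alpha_i \neq \tfrac{2}{r}-\half$ become indispensable.
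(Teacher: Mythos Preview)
Your outline is a recognizable and, in the $L^2$-based setting, standard route: convert the null form to an angle, bound the angle by a square root of the maximal modulation divided by the minimal input frequency, and reduce to shifted product estimates. But it is \emph{not} the route the paper takes, and the comparison is worth making explicit.

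The paper applies the transfer principle \emph{first}: it reduces to free waves $u_\pm(t)=e^{\pm itD}f$, $v_\pm(t)=e^{\pm itD}g$, so that on the Fourier side the convolution is supported on $\{\tau=|\eta|\pm|\xi-\eta|\}$. At that point the Foschi--Klainerman bound on the $q_{ij}$ symbol (\cite{FK}, Lemma~13.2) becomes a concrete weight in $(\tau,\xi)$ alone, namely $|\xi|^{1/2}||\tau|-|\xi||^{1/2}|\eta|^{-1/2}|\xi-\eta|^{-1/2}$, with no reference to input modulations. The whole estimate then collapses to verifying $\sup_{\tau,\xi} I_\pm(\tau,\xi)\lesssim 1$ for an explicit weighted $\delta$-integral, and \cite{FK}, Propositions~4.3, 4.5 and Lemma~4.4 give closed-form asymptotics $\tau^A||\tau|-|\xi||^B$ (resp.\ $|\xi|^A||\tau|-|\xi||^B$) for these integrals. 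The conditions $\alpha_1+\alpha_2-\alpha_0=\tfrac{2}{r}$, $\alpha_i\le\tfrac{2}{r}$, $\alpha_i\neq\tfrac{2}{r}-\tfrac12$, $\alpha_1+\alpha_2>\tfrac{3}{r}-1$ are read off directly from these exponents (the last one is exactly the convergence condition for the integral in the hyperbolic subcase $|\eta|+|\xi-\eta|>2|\xi|$, and the exclusion $\alpha_i\neq\tfrac{2}{r}-\tfrac12$ is where the exponent $\max((\alpha_i+\tfrac12)r,2)$ switches branches).

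Your approach postpones the transfer principle and instead distributes the angle gain as a half-power of one of three modulations. This is fine in principle, but the ``product estimates'' you then need --- of the form $\|D_-^{1/2}(uv)\|_{\dot X^r_{\alpha_0,0}}$ or $\|uv\|_{\dot X^r_{\alpha_0,0}}\lesssim\|u\|_{\dot X^r_{\alpha_1',b-1/2}}\|v\|_{\dot X^r_{\alpha_2',b}}$ with a half-derivative shift on the low-frequency input --- are not stated in \cite{G1} or \cite{GN} at the generality you need; those papers prove null-form and $D_-^\gamma$-weighted estimates directly by the same FK integration the paper uses here. So your reduction does not actually land on off-the-shelf results, and filling in the missing product lemmas would reproduce the paper's direct computation anyway. (Your description of the output-modulation case as needing $\|uv\|_{\dot X^r_{\alpha_0-1/2,0}}$ is also not quite right: the half-power of $||\tau|-|\xi||$ sits as a \emph{weight on the output}, i.e.\ one needs $\|uv\|_{\dot X^r_{\alpha_0,1/2}}$ with a simultaneous $\min(|\eta|,|\xi-\eta|)^{-1/2}$ loss, not a spatial shift on $\alpha_0$.) In short: the strategy is defensible, but it is more circuitous than the paper's, and the black boxes you invoke are not actually available --- you would end up doing the FK integral computation regardless.
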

\begin{proof}
We decompose $uv=u_+ v_++ u_+v_-+u_-v_++u_-v_-$ , where $u_{\pm}(t)=e^{\pm itD} f$ and $v_{\pm}(t)=e^{\pm itD} g$ . It is sufficient to consider $u_+v_+$ and $u_+v_-$ . Using
$$ \widehat{u_{\mp}}(\tau,\xi) = c \delta(\tau \pm |\xi|) \widehat{f}(\xi) \, , \,  \widehat{v´_{\mp}}(\tau,\xi) = c \delta(\tau \pm |\xi|) \widehat{g}(\xi) \, . $$
The Fourier symbol of $q_{ij}$ is bounded by $|\frac{\eta \times (\xi - \eta)}{|\eta| |\xi - \eta|}|$ , so that in order to estimate  $\|q_{ij}(u_+, v_{\pm})\|_{\dot{X}^r_{\alpha_0,0}}$ we have to consider
\begin{align}
\nonumber
&\left\| \int |\xi|^{\alpha_0}\left|\frac{\eta \times (\xi - \eta)}{|\eta| |\xi - \eta|}\right| \delta(\tau-|\eta| \mp |\xi - \eta|) \widehat{f}(\eta) \widehat{g}(\xi-\eta) d\eta \right\|_{L^{r'}_{\tau \xi}}^{r'} \\
\nonumber
&\lesssim \sup_{\xi,\tau} (\int |\xi|^{r \alpha_0} \left|\frac{\eta \times (\xi - \eta)}{|\eta| |\xi - \eta|}\right|^r \delta(\tau-|\eta| \mp |\xi - \eta|) |\xi - \eta|^{-\alpha_1 r} |\eta|^{-\alpha_2 r} d \eta)^{\frac{r'}{r}} \\
\label{99}
& \hspace{5em} \cdot\|\widehat{D^{\alpha_1} f}\|_{L^{r'}} \|\widehat{D^{\alpha_2} g}\|_{L^{r'}} \, .
\end{align}
In the elliptic case, where we have the factor $\delta(\tau-|\eta|-|\xi - \eta|)$  , we use the following estimate:
$$ \left|\frac{\eta \times (\xi - \eta)}{|\eta| |\xi - \eta|}\right| \lesssim \frac{|\xi|^{\half}(|\eta|+|\xi-\eta|-|\xi|)^{\half}}{|\eta|^{\half} |\xi - \eta|^{\half}} = \frac{|\xi|^{\half}(\tau-|\xi|)^{\half}}{|\eta|^{\half} |\xi - \eta|^{\half}} \, ,$$
and in the hyperbolic case with the factor $\delta(\tau-|\eta|+|\xi - \eta|)$ :
$$ \left|\frac{\eta \times (\xi - \eta)}{|\eta| |\xi - \eta|}\right| \lesssim \frac{|\xi|^{\half}(|\xi|-||\eta|-|\xi-\eta||)^{\half}}{|\eta|^{\half} |\xi - \eta|^{\half}} = \frac{|\xi|^{\half}(||\tau|-|\xi||)^{\half}}{|\eta|^{\half} |\xi - \eta|^{\half}} $$ 
 (cf. \cite{FK}, Lemma 13.2).
Thus we have to prove
$\sup_{\xi,\tau} I_{\pm}(\xi,\tau) \lesssim 1$ , where
$$I_{\pm}:=|\xi|^{\alpha_0 r +\frac{r}{2}} ||\tau| - |\xi||^{\frac{r}{2}} \int \frac{\delta(\tau-|\eta|\mp |\xi-\eta|)}{|\xi - \eta|^{(\alpha_1+\half)r} |\eta|^{(\alpha_2+\half)r}} d\eta \, . $$
{\bf Elliptic case.}
By \cite{FK}, Prop. 4.3 we obtain
$$\int \frac{\delta(\tau-|\eta|-|\xi-\eta|)}{|\xi - \eta|^{(\alpha_1+\half)r} |\eta|^{(\alpha_2+\half)r}} d\eta \sim \tau^A (\tau - |\xi|)^B \, , $$
where we assume without loss of generality $\alpha_1 \ge \alpha_2$ , so that in the case $\alpha_1 > \frac{2}{r}-\half$ we obtain $A= \max((\alpha_1+\half)r,2) -(\alpha_1+\alpha_2+1)r = -(\alpha_2+\half)r$ and $B=2-(\alpha_1+\half)r$. This implies
$$ I_+ \lesssim |\xi|^{(\alpha_0 + \half)r} \tau^{-(\alpha_1+\half)r} |\tau -|\xi||^{2-\alpha_1r} \lesssim \tau^{(\alpha_0-\alpha_1-\alpha_2)r +2} = 1 $$ by our assumptions $\alpha_1 \le \frac{2}{r}$ and  $\alpha_1+\alpha_2-\alpha_0 = \frac{2}{r}$ and $\alpha_0,\alpha_1,\alpha_2 \ge 0$ , where we used $\tau = |\eta|+|\xi - \eta| \ge |\xi|$ . In the case $\alpha_1 < \frac{2}{r}-\half$ we obtain similarly $A=2-(\alpha_1+\alpha_2+1)r$, $B=0$ , so that
$$ I_+ \lesssim |\xi|^{(\alpha_0+\half)r} \tau^{2-(\alpha_1 + \alpha_2+1)r} |\tau-|\xi||^{\frac{r}{2}} \lesssim \tau^{(\alpha_0-\alpha_1-\alpha_2)r+2} = 1 \, . $$
{\bf Hyperbolic case.}\\
Subcase: $|\eta| + |\xi-\eta| \le 2|\xi|$ .\\
By \cite{FK}, Prop. 4.5 we obtain
$$\int_{|\eta|+|\xi-\eta| \le 2 |\xi|} \frac{\delta(\tau-|\eta|+|\xi-\eta|)}{|\xi - \eta|^{(\alpha_1+\half)r} |\eta|^{(\alpha_2+\half)r}} d\eta \sim |\xi|^A (|\tau| - |\xi|)^B \, , $$
where in the case $0 \le \tau \le |\xi|$ : $A=\max((\alpha_2+\half)r,2) -(\alpha_1+\alpha_2+1)r$ , $B= 2-\max((\alpha_2+\half)r,2)$ . 
\\
If $\alpha_2 > \frac{2}{r}-\half$ we obtain $A=-(\alpha_1+\half)r$ , $B= 2-(\alpha_2+\half)r$ , so that
$$
I_-  \lesssim |\xi|^{(\alpha_0 +\half)r} |\xi|^{-(\alpha_1+\half)r} ||\tau|-|\xi||^{2-\alpha_2 r} \lesssim |\xi|^{2+(\alpha_0-\alpha_1-\alpha_2)r} = 1 \, , $$
by our assumptions $\alpha_2 \le \frac{2}{r}$ , $\alpha_1+\alpha_2-\alpha_0 = \frac{2}{r}$ and $\alpha_0,\alpha_1,\alpha_2 \ge 0$ . 
If $\alpha_2 < \frac{2}{r}-\half$ we have $A=2-(\alpha_1+\alpha_2+1)r$ , $B=0$ , so that
$$I_- \lesssim |\xi|^{(\alpha_0 +\half)r} ||\xi|-|\tau||^{\frac{r}{2}} |\xi|^{2-(\alpha_1+\alpha_2+1)r} \lesssim 1 \, . $$
In the case $-|\xi| \le \tau \le 0$ we have $A=\max((\alpha_1+\half)r,2) -(\alpha_1+\alpha_2+1)r$ , $B=2-\max((\alpha_1+\half)r,2)$ . Similarly as before we obtain $I_- \lesssim 1$ .\\
Subcase: $|\eta|+|\xi-\eta| \ge 2|\xi|$ . \\
We use \cite{FK}, Lemma 4.4 and obtain
\begin{align*}
&\int_{|\eta|+|\xi-\eta| \ge 2 |\xi|} \frac{\delta(\tau-|\eta|+|\xi - \eta|)}{|\eta|^{(\alpha_1+\half)r} |\xi-\eta|^{\alpha_2+\half)r}} d\eta \\
& \sim \int_2^{\infty} (|\xi|x+\tau)^{-(\alpha_2+\half)r} (|\xi|x-\tau)^{-(\alpha_1+\half)r} (|\xi|^2x^2 - \tau^2) dx \\
& \sim \int_2^{\infty}(x+\frac{\tau}{|\xi|})^{-(\alpha_2+\half)r+1} (x-\frac{\tau}{|\xi|})^{-(\alpha_1+\half)r+1} dx \cdot |\xi|^{-(\alpha_1+\alpha_2+1)r+2} \, . 
\end{align*}
The lower bound of the integral is in fact 2, as the proof of \cite{FK}, Lemma 4.4 shows.
Using $|\tau| \le |\xi|$ the integral is bounded, provided $\alpha_1+\alpha_2 > \frac{3}{r}-1$ , and  we obtain
$$ I_- \lesssim |\xi|^{\alpha_0r+\frac{r}{2}-(\alpha_1+\alpha_2+1)r+2} ||\xi|-|\tau||^{\frac{r}{2}} \lesssim |\xi|^{(\alpha_0-\alpha_1-\alpha_2)r+2} = 1 \, ,$$
by our assumption $\alpha_1+\alpha_2-\alpha_0 = \frac{2}{r}$ .

Finally, combining the estimate for $I_{\pm}$ and (\ref{99}) with the transfer principle Prop. \ref{Prop.0.1} we obtain the result.
\end{proof}

\begin{lemma}
\label{Prop.1.2}
Let the assumptions of Lemma \ref{Prop.1.1} be satisfied. The following estimate applies
$$\| q_{0j}(u,v)\|_{\dot{X}^r_{\alpha_0,0}} \lesssim \|u\|_{\dot{X}^r_{\alpha_1,b}} \|v\|_{\dot{X}^r_{\alpha_2,b}} \, . $$
\end{lemma}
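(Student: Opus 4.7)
The argument parallels the proof of Lemma \ref{Prop.1.1} almost verbatim, the only novelty being the identification of the correct pointwise symbol bound for the null form $q_{0j}$; once this is in hand, every subsequent step carries over.

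First I decompose $u=u_++u_-$, $v=v_++v_-$ with $u_\pm=e^{\pm itD}f$, $v_\pm=e^{\pm itD}g$; by symmetry it suffices to treat $q_{0j}(u_+,v_+)$ (elliptic) and $q_{0j}(u_+,v_-)$ (hyperbolic). Using $q_{0j}(Du,Dv)=Q_{0j}(u,v)=\partial_t u\,\partial_j v-\partial_j u\,\partial_t v$ together with the fact that $\partial_t u_{\pm_1}$ has Fourier multiplier $\pm_1 i|\eta|$, a direct computation shows that the symbol of $q_{0j}(u_{\pm_1},v_{\pm_2})$ on the characteristic set $\tau=\pm_1|\eta|+\pm_2|\xi-\eta|$ is the $j$-th component of the vector $\mp_1\tfrac{\xi-\eta}{|\xi-\eta|}\pm_2\tfrac{\eta}{|\eta|}$. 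Its magnitude is therefore bounded by $\bigl|\tfrac{\eta}{|\eta|}-\tfrac{\xi-\eta}{|\xi-\eta|}\bigr|$ in the same-sign (elliptic) case and by $\bigl|\tfrac{\eta}{|\eta|}+\tfrac{\xi-\eta}{|\xi-\eta|}\bigr|$ in the opposite-sign (hyperbolic) case.

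Next I invoke the elementary identities
\begin{align*}
\Bigl|\tfrac{\eta}{|\eta|}-\tfrac{\xi-\eta}{|\xi-\eta|}\Bigr|^{2} &= \frac{(|\eta|+|\xi-\eta|-|\xi|)(|\eta|+|\xi-\eta|+|\xi|)}{|\eta|\,|\xi-\eta|}, \\
\Bigl|\tfrac{\eta}{|\eta|}+\tfrac{\xi-\eta}{|\xi-\eta|}\Bigr|^{2} &= \frac{\bigl(|\xi|+\bigl||\eta|-|\xi-\eta|\bigr|\bigr)\bigl(|\xi|-\bigl||\eta|-|\xi-\eta|\bigr|\bigr)}{|\eta|\,|\xi-\eta|},
\end{align*}
which follow from $|v_1\pm v_2|^{2}=2(1\pm v_1\cdot v_2)$ for unit vectors together with the law of cosines for the triangle with sides $|\eta|$, $|\xi-\eta|$, $|\xi|$. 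On the respective delta-supports $\tau=|\eta|+|\xi-\eta|$ and $\tau=|\eta|-|\xi-\eta|$, these give, after trivial bounding of the larger factor in each numerator, the two estimates
$$\frac{|\xi|^{1/2}(\tau-|\xi|)^{1/2}}{|\eta|^{1/2}|\xi-\eta|^{1/2}} \qquad\text{and}\qquad \frac{|\xi|^{1/2}\bigl||\tau|-|\xi|\bigr|^{1/2}}{|\eta|^{1/2}|\xi-\eta|^{1/2}}$$
used for the $q_{ij}$ symbol $|\eta\times(\xi-\eta)|/(|\eta|\,|\xi-\eta|)$ in the proof of Lemma \ref{Prop.1.1}.

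Equipped with these identical pointwise symbol bounds, I form the analogue of (\ref{99}) and reduce, exactly as before, to proving $\sup_{\xi,\tau}I_{\pm}(\xi,\tau)\lesssim 1$ for the very same integrals; the elliptic estimate via \cite{FK}, Prop.~4.3, and the hyperbolic one by splitting into the subcases $|\eta|+|\xi-\eta|\le 2|\xi|$ (\cite{FK}, Prop.~4.5) and $|\eta|+|\xi-\eta|\ge 2|\xi|$ (\cite{FK}, Lemma~4.4) then go through line for line under the assumptions on $\alpha_{0},\alpha_{1},\alpha_{2},r$. A final appeal to the transfer principle Prop. \ref{Prop.0.1} yields the claim. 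The only non-mechanical step is the symbol computation of the second paragraph; I expect no genuinely new obstacle.
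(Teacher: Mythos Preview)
Your strategy coincides with the paper's, and the hyperbolic part is correct. There is, however, a genuine slip in the elliptic symbol bound. From your own identity, on the support $\tau=|\eta|+|\xi-\eta|$ one has
$$\Bigl|\tfrac{\eta}{|\eta|}-\tfrac{\xi-\eta}{|\xi-\eta|}\Bigr|^{2}=\frac{(\tau-|\xi|)(\tau+|\xi|)}{|\eta|\,|\xi-\eta|}\,,$$
and the ``larger factor'' $\tau+|\xi|$ satisfies $\tau+|\xi|\le 2\tau$, \emph{not} $\tau+|\xi|\lesssim|\xi|$ (recall $\tau\ge|\xi|$ here). The correct elliptic bound is therefore
$$|\widehat{q_{0j}}(\eta,\xi-\eta)|\lesssim \frac{\tau^{1/2}(\tau-|\xi|)^{1/2}}{|\eta|^{1/2}|\xi-\eta|^{1/2}}\,,$$
which is strictly larger than the $q_{ij}$ bound you quote. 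Consequently the elliptic integral is not ``the very same $I_+$'' from Lemma~\ref{Prop.1.1}; one factor $|\xi|^{r/2}$ must be replaced by $\tau^{r/2}$, and you have to estimate
$$I'=|\xi|^{\alpha_0 r}\,\tau^{r/2}\,|\tau-|\xi||^{r/2}\int\frac{\delta(\tau-|\eta|-|\xi-\eta|)}{|\xi-\eta|^{(\alpha_1+\frac12)r}\,|\eta|^{(\alpha_2+\frac12)r}}\,d\eta\,.$$
This is exactly what the paper does: applying \cite{FK}, Prop.~4.3 together with $|\xi|^{\alpha_0 r}\le\tau^{\alpha_0 r}$ (using $\alpha_0\ge 0$) and $|\tau-|\xi||\le\tau$, one still obtains $I'\lesssim\tau^{(\alpha_0-\alpha_1-\alpha_2)r+2}=1$ in both subcases $\alpha_1\gtrless\tfrac{2}{r}-\tfrac12$. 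Once you make this correction, the remainder of your argument (hyperbolic case unchanged, transfer principle) is fine and matches the paper.
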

\begin{proof} The symbol of $q_{0j}$ can be estimated by \cite{FK}, Lemma 13.2 as follows:
$$ |\widehat{q_{0j}} (\eta,\xi-\eta)| \lesssim \frac{(|\eta|+|\xi-\eta|)^{\half} (|\eta|+|\xi-\eta|-|\xi|)^{\half}}{|\eta|^{\half} |\xi-\eta|^{\half}} = \frac{\tau^{\half} (\tau-|\xi|)^{\half}}{|\eta|^{\half} |\xi-\eta|^{\half}} $$
in the elliptic case, and
$$ |\widehat{q_{0j}} (\eta,\xi-\eta)| \lesssim \frac{|\xi|^{\half} (|\xi|-||\eta|-|\xi-\eta||)^{\half}}{|\eta|^{\half} |\xi-\eta|^{\half}} = \frac{|\xi|^{\half} ||\xi|-|\tau||^{\half}}{|\eta|^{\half} |\xi-\eta|^{\half}} $$
in the hyperbolic case. In the hyperbolic case we obtain the same bounds as for $q_{ij}$ in Lemma \ref{Prop.1.1}, whereas in the elliptic case we have to replace one factor $|\xi|^{\frac{r}{2}}$ by $\tau^{\frac{r}{2}}$. Thus we have to show $\sup_{\xi,\tau} I' \lesssim 1$ , where
$$I' = |\xi|^{\alpha_0 r} |\tau|^{\frac{r}{2}} |\tau-\xi|^{\frac{r}{2}} \int \frac{\delta(\tau-|\eta|-|\xi-\eta|)}{|\xi-\eta|^{(\alpha_1+\half)r} |\eta|^{(\alpha_2+\half)r}} d\eta \, . $$
As in Lemma \ref{Prop.1.1} we obtain in the case $\alpha_1 > \frac{2}{r}-\half$ :
$$ I' \lesssim |\xi|^{\alpha_0r}\tau^{\frac{r}{2}} \tau^{-(\alpha_2+\half)r} ||\tau|-|\xi||^{2-\alpha_1r} \lesssim \tau^{-(\alpha_1 + \alpha_2 -\alpha_0)r+2} = 1 \, ,$$
if we assume $\alpha_1+\alpha_2-\alpha_0 = \frac{2}{r}$ . In the case $\alpha_1 <\frac{2}{r}-\half$ we obtain similarly
$$ I' \lesssim |\xi|^{\alpha_0r}\tau^{\frac{r}{2}} \tau^{2-(\alpha_1+\alpha_2+1)r} ||\tau|-|\xi||^{\frac{r}{2}} \lesssim \tau^{-(\alpha_1 + \alpha_2 -\alpha_0)r+2} = 1 \, .$$
The transfer principle completes the proof.
\end{proof}

\begin{lemma}
\label{Prop.1.3}
Let $1 < r \le 2$ , $b > \frac{1}{r}$ , $\alpha_0,\alpha_1,\alpha_2 \ge 0$ , $\alpha_1,\alpha_2 \le \frac{2}{r}$ , $\alpha_1+\alpha_2 \ge \frac{3}{r}-2$, $\alpha_1+\alpha_2-\alpha_0 = \frac{2}{r}$ and $\alpha_1,\alpha_2 \neq \frac{2}{r}-1$ . The following estimate applies:
$$\| q_0(u,v)\|_{\dot{X}^r_{\alpha_0,0}} \lesssim \|u\|_{\dot{X}^r_{\alpha_1,b}} \|v\|_{\dot{X}^r_{\alpha_2,b}} \, . $$
\end{lemma}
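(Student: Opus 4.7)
The plan is to mimic the proofs of Lemmas \ref{Prop.1.1} and \ref{Prop.1.2}, replacing the square-root null-form gain used there by the full gain available for $q_0$. After decomposing $u=u_++u_-$, $v=v_++v_-$ with $u_\pm(t)=e^{\pm itD}f$, $v_\pm(t)=e^{\pm itD}g$, I only treat the elliptic product $q_0(u_+,v_+)$ (so $\tau=|\eta|+|\xi-\eta|$) and the hyperbolic product $q_0(u_+,v_-)$ (so $\tau=|\eta|-|\xi-\eta|$ with $|\tau|\le|\xi|$). Via the identity $|\eta||\xi-\eta|\mp\eta\cdot(\xi-\eta)=\half((|\eta|\pm|\xi-\eta|)^2-|\xi|^2)$ and \cite{FK}, Lemma 13.2, the symbol of $q_0$ satisfies
\[
|\widehat{q_0}(\eta,\xi-\eta)|\lesssim\frac{\tau(\tau-|\xi|)}{|\eta||\xi-\eta|}\ \text{(elliptic)},\qquad |\widehat{q_0}(\eta,\xi-\eta)|\lesssim\frac{|\xi|\,||\xi|-|\tau||}{|\eta||\xi-\eta|}\ \text{(hyperbolic)},
\]
so that compared with $q_{ij}$ and $q_{0j}$ both the modulation and the privileged frequency appear with a full power instead of a square root.

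Invoking the transfer principle Prop. \ref{Prop.0.1} exactly as in (\ref{99}), the bound reduces to $\sup_{\xi,\tau}I''_\pm\lesssim 1$, where
\[
I''_+=|\xi|^{\alpha_0 r}\tau^r(\tau-|\xi|)^r\int\frac{\delta(\tau-|\eta|-|\xi-\eta|)}{|\xi-\eta|^{(\alpha_1+1)r}|\eta|^{(\alpha_2+1)r}}\,d\eta
\]
and
\[
I''_-=|\xi|^{(\alpha_0+1)r}(|\xi|-|\tau|)^r\int\frac{\delta(\tau-|\eta|+|\xi-\eta|)}{|\xi-\eta|^{(\alpha_1+1)r}|\eta|^{(\alpha_2+1)r}}\,d\eta.
\]
The angular $\delta$-integrals are evaluated exactly as before by \cite{FK}, Prop. 4.3 in the elliptic case and by \cite{FK}, Prop. 4.5 together with Lemma 4.4 in the hyperbolic case (split into $|\eta|+|\xi-\eta|\le 2|\xi|$ and $\ge 2|\xi|$). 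Because the homogeneity exponents are now $(\alpha_j+1)r$ rather than $(\alpha_j+\half)r$, the dichotomy that decides whether the $\max$ in Prop. 4.3/4.5 is realized by the homogeneous exponent or saturates at the Lebesgue exponent $2$ is governed by the sign of $\alpha_j-(\frac{2}{r}-1)$, which is precisely why $\alpha_j\neq\frac{2}{r}-1$ has to be excluded. In each resulting branch the algebraic bookkeeping is identical to Lemma \ref{Prop.1.1}: the constraint $\alpha_1+\alpha_2-\alpha_0=\frac{2}{r}$ together with $0\le\alpha_j\le\frac{2}{r}$ collapses the collected powers to $\tau^{(\alpha_0-\alpha_1-\alpha_2)r+2}=1$ or $|\xi|^{(\alpha_0-\alpha_1-\alpha_2)r+2}=1$.

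The main obstacle is the far-field hyperbolic subcase $|\eta|+|\xi-\eta|\ge 2|\xi|$, where \cite{FK}, Lemma 4.4 rewrites the $\delta$-integral as
\[
|\xi|^{-(\alpha_1+\alpha_2+2)r+2}\int_2^\infty\bigl(x+\tfrac{\tau}{|\xi|}\bigr)^{-(\alpha_2+1)r+1}\bigl(x-\tfrac{\tau}{|\xi|}\bigr)^{-(\alpha_1+1)r+1}dx.
\]
Since the integrand at infinity is heavier by a factor $x^{-r}$ than in the corresponding step of Lemma \ref{Prop.1.1}, convergence now requires $(\alpha_1+\alpha_2+2)r>3$, i.e.\ $\alpha_1+\alpha_2>\frac{3}{r}-2$, which is precisely the sharper hypothesis here. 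Granting this, pairing the resulting $|\xi|^{-(\alpha_1+\alpha_2+1)r+2}$ with the prefactor $|\xi|^{(\alpha_0+1)r}(|\xi|-|\tau|)^r\le|\xi|^{(\alpha_0+2)r}$ collapses once more to $|\xi|^{(\alpha_0-\alpha_1-\alpha_2)r+2}=1$, and the transfer principle finishes the proof.
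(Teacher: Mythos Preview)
Your proof is correct and follows essentially the same approach as the paper: the same symbol bounds from \cite{FK}, Lemma 13.2, the same reduction via Prop.~\ref{Prop.0.1} to $\sup I''_\pm\lesssim 1$, and the same case analysis via \cite{FK}, Prop.~4.3, Prop.~4.5 and Lemma~4.4, with the threshold correctly shifted to $\alpha_j=\frac{2}{r}-1$ and the far-field convergence condition $\alpha_1+\alpha_2>\frac{3}{r}-2$. There is a harmless slip in your last line, where the exponent should read $|\xi|^{-(\alpha_1+\alpha_2+2)r+2}$ (as you wrote it two lines earlier) rather than $|\xi|^{-(\alpha_1+\alpha_2+1)r+2}$; with this correction the collapse to $|\xi|^{(\alpha_0-\alpha_1-\alpha_2)r+2}=1$ is exactly as in the paper.
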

\begin{proof}
By \cite{FK}, Lemma 13.2 we obtain
$$ |\widehat{q_0}(\eta,\xi-\eta)| \sim \frac{(|\eta| + |\xi-\eta|)(|\eta|+|\xi-\eta|-|\xi|)}{|\eta| |\xi-\eta|} $$ 
and
$$ |\widehat{q_0}(\eta,\xi-\eta)| \sim \frac{|\xi| (|\xi|-||\eta|-|\xi-\eta||)}{|\eta| |\xi-\eta|} \, .$$ 
{\bf Elliptic part:}
We use the first bound for the elliptic part,  we have $\tau=|\eta|+|\xi-\eta|$ so that $\tau \ge |\xi|$ , and we have to show
$$I:= |\xi|^{\alpha_0 r} \tau^r |\tau-|\xi||^r \int \frac{\delta(\tau-|\eta|-|\xi-\eta|)}{|\eta|^{(\alpha_1+1)r} |\xi-\eta|^{(\alpha_2+1)r}} d\eta \lesssim 1\, . $$
By \cite{FK}, Lemma 4.3 the integral behaves like $\tau^A |\tau - |\xi||^B$ , where $A=\max((\alpha_1+1)r,2)-(\alpha_1+\alpha_2+2)r = -(\alpha_2 +1)r$ , $B=2-(\alpha_1+1)r$ , if $\alpha_1 > \frac{2}{r} -1$ , so that
$$I  \sim |\xi|^{\alpha_0 r} \tau^r |\tau -|\xi||^r \tau^{-(\alpha_2+1)r} |\tau - |\xi||^{2-(\alpha_1+1)r} \lesssim \tau^{(\alpha_0-\alpha_1-\alpha_2)r+2} = 1 \,,$$
if we assume $\alpha_1 \le \frac{2}{r}$ , $\alpha_1+\alpha_2-\alpha_0 = \frac{2}{r}$ and $\alpha_0 \ge 0$ .  In the case $\alpha_1 < \frac{2}{r}-1$ we have $A= 2-(\alpha_1+\alpha_2+2)r$ , $B=0$ , so that
$$ I = |\xi|^{\alpha_0r} \tau^r |\tau-|\xi||^r \tau^{2-(\alpha_1+\alpha_2+2)r} \lesssim 1 $$
as before. \\
{\bf Hyperbolic part:}
Subcase $|\eta|+|\xi-\eta| \le 2|\xi|$ . \\
Using the second bound for $\widehat{q_0}$ we have to prove
$$ I:= |\xi|^{\alpha_0 r} |\xi|^r ||\tau|-|\xi||^r \int_{|\eta|+|\xi-\eta| \le 2|\xi|} \frac{\delta(\tau - |\eta|+|\xi-\eta|)}{|\eta|^{(\alpha_1+1)r} |\xi-\eta|^{(\alpha_2+1)r}} d\eta \lesssim 1 \, . $$
By \cite{FK}, Lemma 4.5 the integral behaves as follows: \\
a. if $0 \le \tau \le |\xi|$ like $|\xi|^A (|\xi|-\tau)^B $ , where $A=\max((\alpha_2+1)r,2)-(\alpha_1+\alpha_2+2)r$ and $B= 2-\max((\alpha_2+1)r$ . In the case $\alpha_2 > \frac{2}{r}-1$ we obtain $A=-(\alpha_1+1)r$ and $B= 2-(\alpha_2+1)r$ , so that
\begin{align*}
I &\sim |\xi|^{\alpha_0r} |\xi|^r ||\tau|-|\xi||^r |\xi|^{-(\alpha_1 +1)r} (|\xi|-\tau)^{2-(\alpha_2+1)r} \\
&= |\xi|^{(\alpha_0-\alpha_1)r} ||\tau|-|\xi||^{2-\alpha_2r} \lesssim |\xi|^{(\alpha_0-\alpha_1-\alpha_2)r+2} = 1  
\end{align*}
by the assumptions $\alpha_2 \le \frac{2}{r}$ and $\alpha_1+\alpha_2-\alpha_0 = \frac{2}{r}$ . In the case $\alpha_2 < \frac{2}{r}-1$ we obtain $A=2-(\alpha_1+\alpha_2+2)r$ , $B=0$ , thus
$$I \lesssim |\xi|^{(\alpha_0+1)r} ||\tau|-|\xi||^r |\xi|^{2-(\alpha_1+\alpha_2+2)r} \lesssim |\xi|^{(\alpha_0-\alpha_1-\alpha_2)r+2} = 1 \, .$$
b. If $-|\xi|< \tau \le 0$ the integral behaves like $|\xi|^A (|\xi|+\tau)^B$ , where $A=\max((\alpha_1+1)r,2) -(\alpha_1+\alpha_2+2)r$ , $B=2-\max((\alpha_1+1)r,2) $ ,  so that in the case $\alpha_1 < \frac{2}{r}-1$ we obtain $A=2-(\alpha_1+\alpha_2+2)r$, $B=0$ . Therefore 
$$I \lesssim |\xi|^{(\alpha_0+1)r} ||\tau|-|\xi||^r |\xi|^{2-(\alpha_1+\alpha_2+2)r} \lesssim 1 \, . $$
If $\alpha_1 > \frac{2}{r}-1$ we obtain with $A=-(\alpha_2+1)r$ , $B=2-(\alpha_1+1)r$ the estimate
$$I \lesssim |\xi|^{(\alpha_0+1)r} ||\tau|-|\xi||^r |\xi|^{-(\alpha_2+1)r} ||\xi|-|\tau||^{2-(\alpha_1+1)r} \sim |\xi|^{(\alpha_0-\alpha_2)r} ||\tau|-|\xi||^{2-\alpha_1 r} \lesssim 1 \, , $$
by the assumption $\alpha_1 \le \frac{2}{r}$ .\\
Subcase: $|\eta|+|\xi-\eta| > 2|\xi|$ . \\
By \cite{FK}, Lemma 4.4 we obtain
\begin{align*}
&\int_{|\eta|+|\xi-\eta| > 2|\xi|} \frac{\delta(\tau-|\eta|+|\xi-\eta|)}{|\eta|^{(\alpha_1+1)r} |\xi-\eta|^{(\alpha_2+1)r}} d\eta 
 \\
& \sim \int_2^{\infty} (|\xi|x+\tau)^{-(\alpha_1+1)r} (|\xi|x-\tau)^{-(\alpha_2+1)r} (|\xi|^2x^2-\tau^2) dx \\
& \sim \int_2^{\infty} (x+ \frac{\tau}{|\xi|})^{-(\alpha_1+1)r+1} (x-\frac{\tau}{|\xi|})^{-(\alpha_2+1)r+1} dx \cdot |\xi|^{-(\alpha_1+\alpha_2+2)r+2} \, . 
\end{align*}
Using $|\tau| \le |\xi|$ and the assumption $\alpha_1+\alpha_2 > \frac{3}{r}-2$ the integral is bounded. Thus we obtain
$$I \lesssim |\xi|^{(\alpha_0+1)r} ||\tau|-|\xi||^r |\xi|^{-(\alpha_1+\alpha_2+2)r+2} \lesssim |\xi|^{(\alpha_0-\alpha_1-\alpha_2)r+2} = 1 \, . $$
The transfer principle completes the proof.
\end{proof}

\begin{lemma}
\label{Lemma1.1}
Let $r=1+$ , $ b > \frac{1}{r} $ , $\alpha_1 < \frac{3}{r}-1$ , $\alpha_2 > 1$ and $\alpha_1+\alpha_2 > \frac{3}{r}$ . Then the following estimate applies
$$ \|uv\|_{X^r_{0,0}} \lesssim \|  u\|_{\dot{X}^r_{\alpha_1,b}}\| \nabla v\|_{X^r_{\alpha_2-1,b}} \, . $$
\end{lemma}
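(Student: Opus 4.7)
The plan is to mirror the proof strategy of Lemmas \ref{Prop.1.1}--\ref{Prop.1.3}. Since $\|uv\|_{X^r_{0,0}}=\|uv\|_{\widehat{L}^{r}_{t,x}}$ and $\|\nabla g\|_{\widehat{H}^{\alpha_2-1,r}}\sim\||\xi|\langle\xi\rangle^{\alpha_2-1}\widehat{g}\|_{L^{r'}_\xi}$, the transfer principle (Proposition \ref{Prop.0.1}) reduces matters, after splitting $u=u_++u_-$ and $v=v_++v_-$, to the bilinear estimate
$$ \|u_{\pm_1}v_{\pm_2}\|_{\widehat{L}^{r}_{t,x}} \lesssim \|f\|_{\dot{\widehat{H}}^{\alpha_1,r}}\,\bigl\|w\,\widehat{g}\bigr\|_{L^{r'}_\xi},\qquad w(\mu):=|\mu|\langle\mu\rangle^{\alpha_2-1}, $$
for free solutions $u_\pm(t)=e^{\pm itD}f$, $v_\pm(t)=e^{\pm itD}g$. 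Starting from
$$ \widehat{u_{\pm_1}v_{\pm_2}}(\tau,\xi)=c\int\widehat{f}(\eta)\widehat{g}(\xi-\eta)\,\delta(\tau\mp_1|\eta|\mp_2|\xi-\eta|)\,d\eta, $$
I apply H\"older's inequality in $\eta$ with conjugate exponents $r',r$, via the factorization
$$ |\widehat{f}(\eta)\widehat{g}(\xi-\eta)|=\bigl(|\eta|^{\alpha_1}|\widehat{f}(\eta)|\cdot w(\xi-\eta)|\widehat{g}(\xi-\eta)|\bigr)\cdot\bigl(|\eta|^{-\alpha_1}w(\xi-\eta)^{-1}\bigr). $$
Raising the resulting pointwise bound to the $r'$-th power, integrating in $(\tau,\xi)$, eliminating $\tau$ via the delta, and applying Fubini, the problem is reduced to showing
$$ \sup_{\tau,\xi}\,I_{\pm}(\tau,\xi)\lesssim 1,\qquad I_\pm(\tau,\xi):=\int|\eta|^{-\alpha_1 r}w(\xi-\eta)^{-r}\,\delta(\tau-|\eta|\mp|\xi-\eta|)\,d\eta. $$

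To control $I_\pm$, I split the integration region according to $|\xi-\eta|\le 1$, where $w(\xi-\eta)^{-r}\simeq|\xi-\eta|^{-r}$, and $|\xi-\eta|>1$, where $w(\xi-\eta)^{-r}\simeq|\xi-\eta|^{-\alpha_2 r}$. On each piece I invoke the Foschi--Klainerman surface-integral formulas: Proposition~4.3 of \cite{FK} in the elliptic case $I_+$, and Proposition~4.5 together with Lemma~4.4 of \cite{FK} in the hyperbolic case $I_-$, further split into $|\eta|+|\xi-\eta|\le 2|\xi|$ versus $>2|\xi|$. The case analysis is the same as in Lemma~\ref{Prop.1.1}, but simpler because there is no null-form factor $||\tau|-|\xi||^{r/2}$ to absorb, and the effective second exponent is $1$ on the low-frequency piece and $\alpha_2$ on the high-frequency piece. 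The hypothesis $\alpha_1<3/r-1$ guarantees local integrability of the inner surface measure near $\eta\sim\xi$; the hypothesis $\alpha_2>1$ controls the on-shell factors coming from the low-frequency substitution; and the scaling condition $\alpha_1+\alpha_2>3/r$ ensures convergence of the hyperbolic far-away tail integral produced by \cite{FK}, Lemma~4.4.

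The main obstacle will be the bookkeeping forced by the inhomogeneous weight $w$: unlike in Lemmas \ref{Prop.1.1}--\ref{Prop.1.3}, whose purely homogeneous weights allow a single uniform Foschi--Klainerman computation, here the low- and high-frequency regimes for $\xi-\eta$ must be handled separately, and one has to verify that the degenerate endpoint $\alpha_2\to 1$ on $|\xi-\eta|\le 1$ remains compatible with the scaling gap $\alpha_1+\alpha_2>3/r$. Once these verifications are assembled the transfer principle returns the stated bound.
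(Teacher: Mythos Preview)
Your approach is genuinely different from the paper's, and it has a real gap. The paper does \emph{not} reduce to free solutions via the transfer principle here; instead it bounds $\|\widehat{u}*\widehat{v}\|_{L^{r'}_{\tau\xi}}$ directly by H\"older and Young, using only that the weight functions $|\xi|^{-\alpha_1}\langle|\tau|-|\xi|\rangle^{-b}$, $\langle\xi\rangle^{-(\alpha_2-1)}$ and $|\xi|^{-1}\langle|\tau|-|\xi|\rangle^{-b}$ lie in suitable mixed Lebesgue spaces. No pointwise uniform bound on any surface integral is required.

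Your route, by contrast, hinges on the claim $\sup_{\tau,\xi}I_\pm(\tau,\xi)\lesssim 1$, and this fails. Take the elliptic case at low output frequency: if $\tau<1$ then the whole ellipsoid $|\eta|+|\xi-\eta|=\tau$ sits inside $\{|\xi-\eta|\le 1\}$, so $w(\xi-\eta)^{-r}\simeq|\xi-\eta|^{-r}$ and the Foschi--Klainerman formula (\cite{FK}, Prop.~4.3) with exponents $\alpha_1 r$ and $r$ (both $<2$) gives
\[
I_+(\tau,\xi)\;\sim\;\tau^{\,2-(\alpha_1+1)r}\,.
\]
For $\alpha_1>\tfrac{2}{r}-1$ this exponent is negative and $I_+$ blows up as $\tau\to 0^+$. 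The hypothesis only gives $\alpha_1<\tfrac{3}{r}-1$, and the applications in the paper (e.g.\ Corollary~\ref{Cor.1.1} with $\alpha_1=1$) sit precisely in the bad range $\tfrac{2}{r}-1<\alpha_1<\tfrac{3}{r}-1$. The difficulty is structural: unlike Lemmas~\ref{Prop.1.1}--\ref{Prop.1.3} and~\ref{Prop.1.5}, there is no positive output weight $|\xi|^{\alpha_0 r}$ here to absorb the small-$\tau$ divergence, so the ``same strategy'' simply does not close. Your remark that ``$\alpha_2>1$ controls the on-shell factors coming from the low-frequency substitution'' is also off: on $|\xi-\eta|\le 1$ the weight is $\simeq|\xi-\eta|^{-r}$ independently of $\alpha_2$, so $\alpha_2$ plays no role there.
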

\begin{proof}
By H\"older and Young we obtain
\begin{align*}
&\|uv\|_{X^r_{0,0}} = \|\widehat{uv}\|_{L^{r'}_{\tau \xi}} \\
&\lesssim
  \||\xi|^{-\alpha_1} \langle|\tau|-|\xi|\rangle^{-b}\|_{L^{r_1}_{\tau} L^{p_2}_{\xi}} \|u \|_{\dot{X}^r_{\alpha_1,b}} \\
& \cdot \| \langle \xi \rangle^{-\alpha_2+1}\|_{L^{q_1}_{\xi}} \||\xi|^{-1} \langle|\tau|-|\xi|\rangle^{-b}\|_{L^{r_2}_{\tau} L^{q_2}_{\xi}} \|\nabla v \|_{X^r_{\alpha_2-1,b}} \\
& \lesssim \|  u\|_{\dot{X}^r_{\alpha_1,b}}\| \nabla v\|_{X^r_{\alpha_2-1,b}}         \, .
\end{align*}
Here we need $1+\frac{1}{r'} = \frac{1}{p_2} + \frac{1}{r'} + \frac{1}{q_1}+\frac{1}{q_2} + \frac{1}{r'}$ and $\frac{1}{r_1}+\frac{1}{r_2}= 1+\frac{1}{r'}$ . Choose $\frac{1}{r_1}=\frac{1}{r_2}  = \half+\frac{1}{2r'}$ , and $p_2 = \frac{3}{\alpha_1}\pm$ , $q_2 = 3\pm$ , where the signs are chosen dependent on the regions  $|\xi|\ge 1$ or $|\xi| \le 1$, so that we obtain $\frac{1}{q_1} = \frac{1}{r} -\frac{1}{3} - \frac{\alpha_1}{3} \pm $ . This requires  $\alpha_1 < \frac{3}{r}-1$  . Moreover we need $q_1(\alpha_2-1) > 3$ , which can be fulfilled if $\alpha_1+\alpha_2 > \frac{3}{r}$ and $\alpha_2 > 1$ .
\end{proof}
As a consequence of these results we obtain the following lemma.

\begin{lemma}
\label{Cor.1}
Let $r=1+$ , $b > \frac{1}{r}$ , and $q(u,v) = q_{ij}(u,v)$ or $q(u,v) = q_{0j}(u,v)$ or $q(u,v) = q_0(u,v)$ . The following estimate applies:
$$ \|q(u,v)\|_{X^r_{1,0}} \lesssim \|\nabla u\|_{X^r_{\frac{2}{r}-1,b}} \|v\|_{X^r_{1,b}} \, . $$
\end{lemma}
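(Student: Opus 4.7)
The plan is to reduce the inhomogeneous bound to the homogeneous null-form estimates already proved in Lemmas \ref{Prop.1.1}, \ref{Prop.1.2}, and \ref{Prop.1.3}. Using $\langle\xi\rangle \lesssim 1 + |\xi|$ on the output weight, together with the fact that $X^r_{0,0} = \dot{X}^r_{0,0}$ (no $\xi$-weight is present), I split
$$\|q(u,v)\|_{X^r_{1,0}} \lesssim \|q(u,v)\|_{\dot{X}^r_{0,0}} + \|q(u,v)\|_{\dot{X}^r_{1,0}},$$
and estimate each piece by $\|\nabla u\|_{X^r_{2/r-1,b}}\|v\|_{X^r_{1,b}}$ via two separate applications of the null-form lemmas with carefully chosen exponents.

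For the $\dot{X}^r_{1,0}$-term, I apply the appropriate one of Lemmas \ref{Prop.1.1}--\ref{Prop.1.3} (depending on which $q$ we are handling) with $\alpha_0 = 1$, $\alpha_1 = 2/r$, $\alpha_2 = 1$. The scaling identity $\alpha_1 + \alpha_2 - \alpha_0 = 2/r$ holds, the hypothesis $\alpha_1 + \alpha_2 = 1 + 2/r$ exceeds both $3/r - 1$ and $3/r - 2$ for $r > 1/2$, the bounds $0 \le \alpha_i \le 2/r$ are clear, and the exceptional values $2/r - 1/2$ and $2/r - 1$ are both missed at $r = 1+$. For the $\dot{X}^r_{0,0}$-term I take instead $\alpha_0 = 0$, $\alpha_1 = 2/r$, $\alpha_2 = 0$; now the condition $\alpha_1 + \alpha_2 > 3/r - 1$ becomes $2/r > 3/r - 1$, i.e.\ exactly $r > 1$, which is why the assumption $r = 1+$ is critical here.

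To finish, I pass from the resulting homogeneous norms on the right to inhomogeneous ones. Since $2/r - 1 \ge 0$ for $r \le 2$, we have $|\xi|^{2/r} \le |\xi|\langle\xi\rangle^{2/r-1}$, so $\|u\|_{\dot{X}^r_{2/r,b}} \lesssim \|\nabla u\|_{X^r_{2/r-1,b}}$; and trivially $\|v\|_{\dot{X}^r_{1,b}} + \|v\|_{\dot{X}^r_{0,b}} \lesssim \|v\|_{X^r_{1,b}}$ from $|\xi|^\alpha \le \langle\xi\rangle$ for $\alpha \in \{0,1\}$. Combining the four ingredients yields the claimed bound uniformly for all three null forms.

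The step I expect to be delicate is the second application ($\alpha_0 = 0$, $\alpha_2 = 0$), which controls the low output frequencies of $q(u,v)$: here the hypothesis on $\alpha_1 + \alpha_2$ saturates exactly at $r = 1$, so the estimate is genuinely critical as $r \to 1$, and the restriction to $r = 1+$ (rather than $r = 1$) in the statement is unavoidable.
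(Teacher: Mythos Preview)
Your argument is correct. The high-output-frequency piece ($\alpha_0=1$, $\alpha_1=2/r$, $\alpha_2=1$) is exactly what the paper does. For the low-output-frequency piece you make a second application of Lemmas~\ref{Prop.1.1}--\ref{Prop.1.3} with $\alpha_0=0$, $\alpha_1=2/r$, $\alpha_2=0$; the paper instead invokes Lemma~\ref{Lemma1.1} (the elementary H\"older/Young product bound), which is legitimate because all three null-form symbols are $O(1)$, so $\|q(u,v)\|_{X^r_{0,0}}$ is dominated by the corresponding product-type convolution estimate. Your route is slightly more self-contained (it uses only the null-form machinery) and makes the criticality at $r=1$ explicit through the constraint $\alpha_1+\alpha_2=2/r>3/r-1$; the paper's route has the advantage of not needing to re-verify the exceptional-value exclusions (e.g.\ $\alpha_2\neq 2/r-1$ for $q_0$) a second time. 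Both lead to the same bound with the same restriction $r=1+$.
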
 
\begin{proof} This follows from Lemma \ref{Prop.1.1}, \ref{Prop.1.2}, \ref{Prop.1.3} with $\alpha_0 =1$, $\alpha_1=\frac{2}{r}$ , $\alpha_2=1$ and Lemma \ref{Lemma1.1} .
\end{proof}

We want to apply this result to the nonlinearity $A^{\mu}\partial_{\mu}\phi$ and recall the known null structure of this term, which can be found in \cite{ST} or \cite{P}. We use the Hodge decomposition $A=A^{cf}+A^{df}$ , where $A^{cf}=\Delta^{-1}\nabla(\nabla \cdot A)$ and $A^{df}=-\Delta^{-1}\nabla \times \nabla \times A$ are the curl-free and divergence-free part, respectively. This decomposes the term as follows:
$$ A^{\mu} \partial_{\mu}\phi = (-A_0 \partial_t \phi + A^{cf} \nabla \phi) + A^{df} \nabla \phi =: P_1+P_2 \, . $$
Now 
$$P_2 = A^{df} \nabla \phi = (\nabla w_l \times \nabla \phi)^l \, , $$
 where $w=\Delta^{-1} \nabla \times A$ . The symbol  of $P_2$ is bounded by $$\frac{|\eta \times (\xi - \eta)|}{|\eta|} \lesssim  |\xi-\eta| \max_{i,j}|\widehat{q_{ij}}(\eta,\xi-\eta)|\,.$$ Using the Lorenz gauge $\partial_t A_0 = \nabla \cdot A$ we obtain $A^{cf}=\Delta^{-1}\nabla \partial_t A_0$, thus
$$ P_1= -A_0 \partial_t \phi+ \Delta^{-1}\nabla \partial_t A_0 \cdot \nabla \phi \, . $$
Recalling $A_{0 \pm} = \half(A_0 \pm (iD)^{-1} A_{0t})$
 and $\phi_{\pm} = \half(\phi \pm (i\Lambda_m)^{-1} \phi_t)$
 this reads as follows:
\begin{align*}
iP_1 &= (A_{0+} +A_{0-}) \Lambda_m(\phi_+-\phi_-) + D^{-1} \nabla(A_{0+}-A_{0-}) \cdot \nabla(\phi_++\phi_-) \\
&= \sum_{\pm_1,\pm_2} \pm_2(A_{0\pm_1} \Lambda_m \phi_{\pm_2} + D^{-1} \nabla(\pm_1 A_{0\pm_1}) \cdot \nabla(\pm_2 \phi_{\pm_2})) \\
&
 = \sum_{\pm_1,\pm_2} a_{\pm_1,\pm_2}(A_{0\pm_1},\phi_{\pm_2}) \, .
\end{align*}
Thus the symbol of $P_1$ behaves as follows (cf. \cite{S}):
$$ |a_{\pm_1,\pm_2}(\eta,\xi-\eta)| \lesssim 1 + |\xi-\eta|(1-\frac{(\pm_1 \eta) \cdot (\pm_2 (\xi-\eta))}{|\eta||\xi-\eta|}) = 1 + |\xi-\eta|q_0(\pm_1 \eta,\pm_2(\xi-\eta)). $$

These bounds for the symbol of $A^{\mu}\partial_{\mu}\phi$ are now combined with Lemma \ref{Cor.1} and Lemma \ref{Lemma1.1}, which implies the following result.
\begin{lemma}
\label{Prop.1.4} For $r=1+$
the following estimate applies:
$$ \|A^{\mu} \partial_{\mu} \phi\|_{X^r_{1,0}} \lesssim \|\nabla A\|_{X^r_{\frac{2}{r}-1,b}} \|\phi\|_{X^r_{2,b}}  \, .$$
\end{lemma}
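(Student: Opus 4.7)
The plan is to exploit the Hodge decomposition $A^\mu\partial_\mu\phi=P_1+P_2$ recorded just before the statement, which dominates each piece on the Fourier side by one of the null--form symbols $|\widehat{q_{ij}}|$ or $|\widehat{q_0}|$, possibly plus a harmless symbol--$1$ remainder. After this reduction the bilinear estimates Lemma \ref{Cor.1} and Lemma \ref{Lemma1.1} can be plugged in.

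For the divergence--free piece $P_2=(\nabla w_l\times\nabla\phi)^l$, I would use the recorded bound $|\widehat{P_2}(\eta,\xi-\eta)|\lesssim |\xi-\eta|\,|\widehat{q_{ij}}(\eta,\xi-\eta)|$ to see that $|\widehat{P_2}|\lesssim |\widehat{q_{ij}(A,|\nabla|\phi)}|$ pointwise. Then Lemma \ref{Cor.1} with $q=q_{ij}$, $u=A$, $v=|\nabla|\phi$ immediately yields
$$\|P_2\|_{X^r_{1,0}} \lesssim \|\nabla A\|_{X^r_{\frac{2}{r}-1,b}}\,\|\phi\|_{X^r_{2,b}}.$$

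For $P_1$, I would expand $iP_1=\sum_{\pm_1,\pm_2}a_{\pm_1,\pm_2}(A_{0\pm_1},\phi_{\pm_2})$ and use the recorded bound $|a_{\pm_1,\pm_2}(\eta,\xi-\eta)|\lesssim 1+|\xi-\eta|\,|\widehat{q_0}(\pm_1\eta,\pm_2(\xi-\eta))|$. The $q_0$ contribution is treated exactly as $P_2$ was, by applying Lemma \ref{Cor.1} with $q=q_0$; the sign combinations $\pm_1,\pm_2$ are inessential because the proofs of Lemmas \ref{Prop.1.1}--\ref{Prop.1.3} use only pointwise symbol bounds that are indifferent to the characteristic sign of each factor. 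The non--null symbol--$1$ remainder reduces to
$$\|A_0\phi\|_{X^r_{1,0}} \lesssim \|\nabla A\|_{X^r_{\frac{2}{r}-1,b}}\,\|\phi\|_{X^r_{2,b}},$$
which I would handle by writing $\langle\xi\rangle\lesssim\langle\eta\rangle+\langle\xi-\eta\rangle$ to move the outer weight onto one factor and then applying Lemma \ref{Lemma1.1} separately in each regime with exponents $\alpha_1,\alpha_2$ tuned so that its hypotheses $\alpha_1<\frac{3}{r}-1$, $\alpha_2>1$, $\alpha_1+\alpha_2>\frac{3}{r}$ all hold and so that $\|A_0\|_{\dot{X}^r_{\alpha_1,b}}$ and $\|\nabla\phi\|_{X^r_{\alpha_2-1,b}}$ are controlled by $\|\nabla A\|_{X^r_{\frac{2}{r}-1,b}}$ and $\|\phi\|_{X^r_{2,b}}$ respectively.

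The main obstacle is this last step. It is the only place where no null structure is available, and the tightness of Lemma \ref{Lemma1.1}'s hypothesis $\alpha_1<\frac{3}{r}-1$ is precisely what forces the limitation $r=1+$ in the statement: as $r$ moves away from $1$ the window for $\alpha_1$ closes below $\frac{2}{r}$ and no choice of exponents can reach the target regularity on $A$. The null parts of $P_1$ and $P_2$, by contrast, sit comfortably within the far wider range of Lemma \ref{Cor.1} and do not dictate this threshold.
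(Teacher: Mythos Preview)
Your proposal is correct and follows essentially the same route as the paper's proof: reduce via the Hodge/null--structure decomposition to the symbol bounds recorded just before the lemma, then invoke Lemma~\ref{Cor.1} for the $q_{ij}$ and $q_0$ pieces and Lemma~\ref{Lemma1.1} for the symbol--$1$ remainder. The paper's own proof is in fact terser than yours, simply citing these two lemmas; your Leibniz treatment of the remainder $\|A_0\phi\|_{X^r_{1,0}}$ is the natural way to make that citation precise, though in the case where the output weight lands on $A_0$ you should further distinguish $|\eta|\lesssim 1$ (where $\langle\xi\rangle\sim 1$ and Lemma~\ref{Lemma1.1} applies with $\alpha_1=1$, $\alpha_2=2$) from $|\eta|\gg 1$ (where $\alpha_1=\frac{2}{r}-1$, $\alpha_2$ just below $2$ works).
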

\vspace{0.5em}

Next we want to estimate the term $Im(\phi \overline{\partial \phi})$ .
\begin{lemma}
\label{Prop.1.5}
Let $1<r\le 2$ , $b > \frac{1}{r}$ , $\alpha_0,\alpha_1,\alpha_2 \ge 0$ , $ \alpha_1 < 1+\frac{2}{r}$ , $\alpha_2 < \frac{2}{r}$ , $(\alpha_1 -1)+\alpha_2 > \frac{3}{r}$ and $(\alpha_0-\alpha_1-\alpha_2+1)r+2=0$ . The following estimate applies:
$$\|\phi \nabla \phi\|_{\dot{X}^r_{\alpha_0,0}}
\lesssim \|\phi\|_{\dot{X}^r_{\alpha_2,b}} \|\nabla \phi\|_{\dot{X}^r_{\alpha_1-1,b}} \, . 
 $$
\end{lemma}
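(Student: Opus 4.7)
Since the product $\phi\nabla\phi$ carries no null structure, the Foschi--Klainerman machinery used in Lemmas \ref{Prop.1.1}--\ref{Prop.1.3} does not apply. The plan is to imitate the direct Fourier-space argument of Lemma \ref{Lemma1.1}: unfold the norm, peel off weights via H\"older's inequality in both space and time, and close via Young's convolution inequality.

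Concretely, I would start from
\[ \|\phi\nabla\phi\|_{\dot X^r_{\alpha_0,0}} = \Bigl\| |\xi|^{\alpha_0}\!\int \widehat\phi(\sigma,\eta)\,\widehat{\nabla\phi}(\tau-\sigma,\xi-\eta)\,d\sigma\,d\eta\Bigr\|_{L^{r'}_{\tau,\xi}}, \]
bound $|\widehat{\nabla\phi}(\tau-\sigma,\xi-\eta)|\lesssim|\xi-\eta||\widehat\phi(\tau-\sigma,\xi-\eta)|$, and introduce
\[ F(\sigma,\eta)=|\eta|^{\alpha_2}\langle|\sigma|-|\eta|\rangle^b|\widehat\phi(\sigma,\eta)|, \qquad G(\tau,\xi)=|\xi|^{\alpha_1}\langle|\tau|-|\xi|\rangle^b|\widehat\phi(\tau,\xi)|, \]
so that $\|F\|_{L^{r'}}=\|\phi\|_{\dot X^r_{\alpha_2,b}}$ and $\|G\|_{L^{r'}}=\|\nabla\phi\|_{\dot X^r_{\alpha_1-1,b}}$. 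Distributing $|\xi|^{\alpha_0}\lesssim|\eta|^{\alpha_0}+|\xi-\eta|^{\alpha_0}$ reduces the estimate to two convolutions of $F,G$ against spatial weights $|\eta|^{-\beta_1}$, $|\xi-\eta|^{-\beta_2}$ and modulation weights $\langle|\sigma|-|\eta|\rangle^{-b}$, $\langle|\tau-\sigma|-|\xi-\eta|\rangle^{-b}$, where the scaling identity $(\alpha_0-\alpha_1-\alpha_2+1)r+2=0$ together with the strict inequalities in the hypotheses yields $\beta_1<\frac{2}{r}$ and $\beta_2<1+\frac{2}{r}$.

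To close, one applies H\"older in $\sigma$ and in $\eta$ to peel off the weight functions from $F$ and $G$ and then Young's inequality in $(\tau,\xi)$ to recover the $L^{r'}_{\tau,\xi}$ bound. As in Lemma \ref{Lemma1.1}, I would choose temporal exponents with $\frac{1}{r_1}+\frac{1}{r_2}=1+\frac{1}{r'}$ so that the modulation weights lie in $L^{r_i}_\tau$ thanks to $b>\frac{1}{r}$, and spatial exponents satisfying $\frac{1}{r}=\frac{1}{p_2}+\frac{1}{q_1}+\frac{1}{q_2}$ with $p_2=\frac{3}{\beta_1}\pm$, $q_2=\frac{3}{\beta_2}\pm$, the signs depending on whether one restricts to $|\xi|\lesssim 1$ or $|\xi|\gtrsim 1$ so as to absorb the endpoint non-integrability of the homogeneous spatial weights at the origin or at infinity. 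The remaining $L^{q_1}_\xi$-norm of the resulting $\langle\xi\rangle^{-\gamma}$ factor is finite precisely when $(\alpha_1-1)+\alpha_2>\frac{3}{r}$. The main obstacle is exactly this bookkeeping of five H\"older exponents simultaneously; the admissible region is nonempty because every inequality in the statement is strict, leaving enough slack to absorb the $\epsilon$-losses coming from $b=\frac{1}{r}+$ and from the forbidden endpoints of the weight exponents.
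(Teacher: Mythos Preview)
Your opening claim is mistaken: the Foschi--Klainerman machinery \emph{does} apply here, and the paper's proof uses it. The results cited in Lemmas~\ref{Prop.1.1}--\ref{Prop.1.3} (\cite{FK} Prop.~4.3, 4.5 and Lemma~4.4) are asymptotics for the integrals
\[
\int \frac{\delta(\tau-|\eta|\mp|\xi-\eta|)}{|\eta|^{a}|\xi-\eta|^{c}}\,d\eta
\]
and have nothing to do with null structure. In the earlier lemmas the null-form symbol merely contributes an extra favourable factor such as $||\tau|-|\xi||^{r/2}$; drop it and the same $\delta$-integrals remain. Hence the paper argues exactly as before: reduce to free waves, show $\sup_{\tau,\xi}I_\pm(\tau,\xi)\lesssim 1$ for
\[
I_\pm=|\xi|^{\alpha_0 r}\int \frac{\delta(\tau-|\eta|\mp|\xi-\eta|)}{|\eta|^{\alpha_2 r}|\xi-\eta|^{(\alpha_1-1)r}}\,d\eta
\]
via the Foschi--Klainerman asymptotics, and finish with the transfer principle. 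The hypothesis $(\alpha_1-1)+\alpha_2>\frac{3}{r}$ enters only to make the far-hyperbolic integral (from \cite{FK} Lemma~4.4) converge.

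More importantly, your alternative does not close. The H\"older--Young route of Lemma~\ref{Lemma1.1} integrates the modulation weights $\langle|\tau|-|\xi|\rangle^{-b}$ crudely in $\tau$ for fixed $\xi$ and therefore captures no dispersive gain; it is a Sobolev-product argument. After your Leibniz split the spatial weight exponents always satisfy $\beta_1+\beta_2=\alpha_1+\alpha_2-1-\alpha_0=\frac{2}{r}$ by the scaling identity, whereas the spatial H\"older/Young balance you write, $\frac{1}{r}=\frac{1}{p_2}+\frac{1}{q_1}+\frac{1}{q_2}$ with $p_2\sim\frac{3}{\beta_1}$ and $q_2\sim\frac{3}{\beta_2}$, forces $\beta_1+\beta_2\ge\frac{3}{r}$. (There is no $\langle\xi\rangle^{-\gamma}$ factor to place in $L^{q_1}$: both norms on the right-hand side of the lemma are homogeneous $\dot{X}^r$.) You are short by exactly $\frac{1}{r}$ derivatives, and that gap is precisely the bilinear gain coming from restriction to the light cone---the $\delta$-function collapses a three-dimensional spatial convolution to a two-dimensional surface integral, which the Foschi--Klainerman asymptotics quantify and which the Lemma~\ref{Lemma1.1} approach cannot see.
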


\begin{proof}
Arguing similarly as in the lemmas before we have to estimate in the elliptic case
$$I = |\xi|^{\alpha_0 r} \int \frac{\delta(\tau-|\eta|-|\xi-\eta|)}{|\eta|^{\alpha_2 r} |\xi-\eta|^{(\alpha_1-1) r}} d\eta \sim |\xi|^{\alpha_0 r} \tau^A ||\tau|-|\xi||^B \, , $$
by \cite{FK}, Prop. 4.3, where $A=\max(\alpha_2 r,(\alpha_1-1)r,2)-(\alpha_1+\alpha_2-1)r = 2 -(\alpha_1+\alpha_2-1)r$, $B=2-\max(\alpha_2 r,(\alpha_1-1)r,2) = 0$ , provided $\alpha_1-1 < \frac{2}{r}$ , $\alpha_2 < \frac{2}{r}$ .  Thus
$$ I \sim |\xi|^{\alpha_0 r} \tau^{2-(\alpha_1+\alpha_2-1)r} \lesssim \tau^{(\alpha_0-\alpha_1-\alpha_2+1)r+2} = 1$$
provided  $(\alpha_0-\alpha_1-\alpha_2 +1)r+2=0$ .\\
In the hyperbolic subcase $|\eta|+|\xi-\eta| \le 2|\xi|$ we obtain similarly as in the elliptic case by \cite{FK}, Prop. 4.5:
$$I =|\xi|^{\alpha_0 r} \int_{|\eta|+|\xi-\eta| \le 2|\xi|} \frac{\delta(\tau-|\eta|+|\xi-\eta|)}{|\eta|^{\alpha_2 r} |\xi-\eta|^{(\alpha_1-1)r}} d\eta \sim |\xi|^{\alpha_0 r} |\xi|^{2-\alpha_2r-(\alpha_1-1)r} = 1 \, ,$$
whereas the subcase $|\eta| + |\xi-\eta| > 2|\xi|$ is handled as follows:
\begin{align*}
I & = |\xi|^{\alpha_0 r} \int_{|\eta|+|\xi-\eta| > 2|\xi|} \frac{\delta(\tau-|\eta|+|\xi-\eta|)}{|\eta|^{\alpha_2 r} |\xi-\eta|^{(\alpha_1 -1)r}} d\eta \\
&\sim |\xi|^{\alpha_0 r} \int_2^{\infty} (|\xi|x+\tau)^{-\alpha_2 r +1} (|\xi|x-\tau)^{-(\alpha_1-1)r+1} dx \\
& \sim |\xi|^{\alpha_0 r} \int_2^{\infty}
(x+ \frac{\tau}{|\xi|})^{-\alpha_2 r +1}
 (x-\frac{\tau}{|\xi|})^{-(\alpha_1-1)r+1} dx 
\cdot |\xi|^{2-(\alpha_1+\alpha_2-1)r} 
 \\
& \lesssim |\xi|^{2+(\alpha_0-\alpha_1-\alpha_2+1)r} = 1
\end{align*}
for $(\alpha_0-\alpha_1-\alpha_2+1)r+2 =0$ , where the integral is bounded, because $|\tau| \le |\xi|$ and $(\alpha_1 -1)+\alpha_2 > \frac{3}{r}$ by assumption. The transfer principle completes the proof.
\end{proof}

\begin{Cor}
\label{Cor.1.1}
For $r=1+$ , $b > \frac{1}{r}$ and $\epsilon > 0$ the following estimate applies:
$$ \|\phi \nabla \phi\|_{X^r_{1-\epsilon,0}} \lesssim \|\phi\|_{X^r_{2,b}} \|\nabla \phi\|_{X^r_{1,b}} \, . $$
\end{Cor}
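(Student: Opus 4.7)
The plan is to split the inhomogeneous weight on the left‐hand side as $\langle\xi\rangle^{1-\epsilon} \lesssim 1 + |\xi|^{1-\epsilon}$, which gives
\begin{equation*}
\|\phi \nabla \phi\|_{X^r_{1-\epsilon,0}} \lesssim \|\phi \nabla \phi\|_{X^r_{0,0}} + \|\phi\nabla \phi\|_{\dot{X}^r_{1-\epsilon,0}}.
\end{equation*}
Thus the Corollary reduces to bounding these two pieces separately by $\|\phi\|_{X^r_{2,b}}\|\nabla\phi\|_{X^r_{1,b}}$, a ``low frequency'' piece handled by Lemma \ref{Lemma1.1} and a ``high frequency'' piece handled by Lemma \ref{Prop.1.5}.

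For the first piece I would apply Lemma \ref{Lemma1.1} with $u=\phi$, $v=\phi$, choosing $\alpha_1$ just below $\frac{3}{r}-1$ (which lies in $(0,2)$ since $1<r<2$) and $\alpha_2=2$. The three hypotheses $\alpha_1<\frac{3}{r}-1$, $\alpha_2>1$ and $\alpha_1+\alpha_2>\frac{3}{r}$ are then immediate. Since $0\le\alpha_1\le 2$ we have $|\xi|^{\alpha_1}\le\langle\xi\rangle^2$, hence $\|\phi\|_{\dot{X}^r_{\alpha_1,b}}\le\|\phi\|_{X^r_{2,b}}$, and the second factor is exactly $\|\nabla\phi\|_{X^r_{\alpha_2-1,b}}=\|\nabla\phi\|_{X^r_{1,b}}$.

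For the second piece I would apply Lemma \ref{Prop.1.5} with $\alpha_0=1-\epsilon$, $\alpha_1=2$ and $\alpha_2=\frac{2}{r}-\epsilon$. A direct substitution shows $(\alpha_0-\alpha_1-\alpha_2+1)r+2 = (-\frac{2}{r})r+2 = 0$, so the scaling identity holds. The conditions $\alpha_0,\alpha_1,\alpha_2\ge 0$ and $\alpha_1<1+\frac{2}{r}$ (equivalent to $r<2$) and $\alpha_2<\frac{2}{r}$ are clear. The last condition $(\alpha_1-1)+\alpha_2 = 1+\frac{2}{r}-\epsilon > \frac{3}{r}$ is equivalent to $r>\frac{1}{1-\epsilon}$, which is arranged by interpreting the convention $r=1+$ so that $r-1$ exceeds $\frac{\epsilon}{1-\epsilon}$. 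Lemma \ref{Prop.1.5} then yields $\|\phi\nabla\phi\|_{\dot{X}^r_{1-\epsilon,0}} \lesssim \|\phi\|_{\dot{X}^r_{\alpha_2,b}}\|\nabla\phi\|_{\dot{X}^r_{1,b}} \le \|\phi\|_{X^r_{2,b}}\|\nabla\phi\|_{X^r_{1,b}}$, again by $|\xi|^s\le\langle\xi\rangle^s$ for $s\ge 0$.

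The only minor obstacle is the interplay between the small parameter $\epsilon$ and the closeness of $r$ to $1$: the condition $(\alpha_1-1)+\alpha_2>\frac{3}{r}$ of Lemma \ref{Prop.1.5} degenerates precisely when $r\to 1$, so one must choose $\epsilon$ small enough (or $r$ large enough within the range $1+$) that $r(1-\epsilon)>1$. Subject to this compatibility, combining the two estimates yields the claim.
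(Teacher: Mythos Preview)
Your overall strategy---splitting $\langle\xi\rangle^{1-\epsilon}\lesssim 1+|\xi|^{1-\epsilon}$ and treating the two pieces by Lemma~\ref{Lemma1.1} and Lemma~\ref{Prop.1.5} respectively---is exactly the paper's approach, and your treatment of the homogeneous piece via Lemma~\ref{Prop.1.5} with $\alpha_0=1-\epsilon$, $\alpha_1=2$, $\alpha_2=\tfrac{2}{r}-\epsilon$ matches the paper verbatim, including the observation about the compatibility between $\epsilon$ and $r$.

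There is, however, a slip in your application of Lemma~\ref{Lemma1.1}. The left-hand side of that lemma is $\|uv\|_{X^r_{0,0}}$, so taking $u=\phi$, $v=\phi$ bounds $\|\phi^2\|_{X^r_{0,0}}$, not $\|\phi\nabla\phi\|_{X^r_{0,0}}$. To hit the correct product you must take $u=\nabla\phi$ and $v=\phi$, which gives
\[
\|\phi\nabla\phi\|_{X^r_{0,0}}\lesssim \|\nabla\phi\|_{\dot X^r_{\alpha_1,b}}\,\|\nabla\phi\|_{X^r_{\alpha_2-1,b}}.
\]
But then your choice $\alpha_1$ ``just below $\tfrac{3}{r}-1$'' (i.e.\ close to $2$) is too large: controlling $\|\nabla\phi\|_{\dot X^r_{\alpha_1,b}}$ by $\|\phi\|_{X^r_{2,b}}$ requires $|\xi|^{\alpha_1+1}\lesssim\langle\xi\rangle^2$, hence $\alpha_1\le 1$. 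The paper takes $\alpha_1=1$, $\alpha_2=2$; then $\alpha_1<\tfrac{3}{r}-1$, $\alpha_2>1$, and $\alpha_1+\alpha_2=3>\tfrac{3}{r}$ all hold for $r=1+$, and one obtains $\|\nabla\phi\|_{\dot X^r_{1,b}}\|\nabla\phi\|_{X^r_{1,b}}\le\|\phi\|_{X^r_{2,b}}\|\nabla\phi\|_{X^r_{1,b}}$. With this correction your argument goes through.
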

\begin{proof}
We use Lemma \ref{Prop.1.5} with $\alpha_0=1-\epsilon$ , $\alpha_2=\frac{2}{r}-\epsilon$ , $\alpha_1=2$ , which gives 
$$ \|\phi \nabla \phi\|_{\dot{X}^r_{1-\epsilon,0}} \lesssim \|\phi\|_{\dot{X}^r_{\frac{2}{r}-\epsilon,b}} \|\nabla \phi\|_{X^r_{1,b}} \lesssim \|\phi\|_{X^r_{2,b}} \|\nabla \phi\|_{X^r_{1,b}}\, , $$
and Lemma \ref{Lemma1.1} with $\alpha_1=1$ , $\alpha_2=2$ .
\end{proof}

Next we want to handle the cubic term $A_{\mu}A^{\mu} \phi$ . We prepare this by the following lemma.
\begin{lemma}
\label{Lemma1.2}
For $r=1+$ , $b > \frac{1}{r}$ and $0 < \epsilon < \frac{3}{2}(1-\frac{1}{r})$ the following estimate applies:
$$\|uv\|_{X^r_{1+\epsilon,b}} \lesssim \|\nabla u\|_{X^r_{1-\epsilon,b}} \| v\|_{X^r_{2,b}} \, . $$
\end{lemma}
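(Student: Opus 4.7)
The plan is to lift a Fourier-Lebesgue Sobolev product inequality from the spatial setting to the wave-Sobolev setting using the $b>\frac{1}{r}$ condition; the hypothesis $\epsilon<\frac{3}{2}(1-\frac{1}{r})$ will emerge as the sharp threshold $s_1+s_2>\frac{3}{r}$ in $\R^3$ with $s_1=2-\epsilon$ (the regularity of $u$ inherited from $\nabla u\in X^r_{1-\epsilon,b}$) and $s_2=2$.

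First I would establish the spatial product estimate
$$\|fg\|_{\widehat{H}^{s,r}(\R^3)}\lesssim\|f\|_{\widehat{H}^{s_1,r}}\|g\|_{\widehat{H}^{s_2,r}}\qquad\text{for }s\le s_1+s_2-\tfrac{3}{r},\,s_1,s_2\ge s>0,$$
by applying the fractional-Leibniz splitting $\langle\xi\rangle^s\lesssim\langle\xi_1\rangle^s+\langle\xi-\xi_1\rangle^s$ to $\widehat{fg}=\hat{f}\ast\hat{g}$, Hölder's inequality in $\xi_1$ with exponents $(r,r')$, and Fubini; the problem reduces to checking $\sup_{\xi}\int\langle\xi_1\rangle^{-s_1r}\langle\xi-\xi_1\rangle^{-s_2r}\,d\xi_1<\infty$, which holds precisely when $(s_1+s_2)r>3$.

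To lift this to $X^r_{s,b}$, I would decompose $u=u_++u_-$, $v=v_++v_-$ according to the characteristic cones $\tau=\mp|\xi|$. On each piece $u_{\pm_1}v_{\pm_2}$ the Fourier support constrains $\tau\approx\pm_1|\xi_1|\pm_2|\xi-\xi_1|$, so the output modulation $\langle|\tau|-|\xi|\rangle^b$ is controlled by the input modulation weights $\langle|\tau_j|-|\xi_j|\rangle^b$ together with a spatial resonance factor $\bigl\langle|\xi_1|\pm_1|\xi-\xi_1|\mp|\xi|\bigr\rangle^b\lesssim\min(|\xi_1|,|\xi-\xi_1|)^b$. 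Distributing these weights among the inputs and performing the $\tau_1$-integration (bounded by $\int\langle\tau_1-a_1\rangle^{-br}\langle\tau-\tau_1-a_2\rangle^{-br}\,d\tau_1<\infty$ from $br>1$) yields the desired product estimate at the $X^r_{s,b}$ level, which applied with the exponents above gives $1+\epsilon\le 4-\epsilon-\tfrac{3}{r}$, equivalent to the stated hypothesis.

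The main obstacle is the resonance factor in the modulation lift, which in principle costs up to $b$ spatial derivatives and, given that $b$ is only just above $1/r\approx 1$ while the spatial margin $s_1+s_2-\tfrac{3}{r}$ is merely $O(\epsilon)$ away from criticality, must be absorbed delicately. The saving lies in the angular structure: $|\xi_1|\pm|\xi-\xi_1|\mp|\xi|$ is much smaller than $\min(|\xi_1|,|\xi-\xi_1|)$ when the frequencies are near-parallel, and the resulting angular average provides just enough room to close the estimate under the stated hypothesis. The low-frequency singularity $|\xi_1|^{-1}$ from $\nabla u$ is harmless since $|\xi_1|^{-r}$ is locally integrable in $\R^3$ for $r<3$.
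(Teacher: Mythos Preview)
Your overall architecture coincides with the paper's: split the output modulation weight $\langle|\tau|-|\xi|\rangle^b$ via what the paper calls the \emph{hyperbolic Leibniz rule}
\[
\bigl||\tau|-|\xi|\bigr|\le\bigl||\rho|-|\eta|\bigr|+\bigl||\tau-\rho|-|\xi-\eta|\bigr|+b_{\pm}(\eta,\xi-\eta),
\]
treat the two ``modulation-on-an-input'' terms by Young--H\"older (your $\tau_1$-integration), and isolate the resonance piece $b_\pm^b$ as the hard term. You also correctly diagnose why the crude bound $b_\pm\lesssim\min(|\eta|,|\xi-\eta|)$ is insufficient here.

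The gap is that your final sentence---``the resulting angular average provides just enough room to close the estimate''---is an assertion, not an argument, and this is exactly the crux of the lemma. The paper does not rely on a qualitative angular heuristic; it reduces the $B_\pm^b$ contribution, via the transfer principle, to a uniform bound on
\[
I_\pm=|\xi|^{(1+\epsilon)r}\,\bigl||\tau|-|\xi|\bigr|^{br}\int\delta(\tau-|\eta|\mp|\xi-\eta|)\,|\eta|^{-lr}|\xi-\eta|^{-2r}\,d\eta
\]
and evaluates the integral precisely using the Foschi--Klainerman asymptotics (their Propositions~4.3, 4.5 and Lemma~4.4). In the elliptic case one gets $I_+\sim|\xi|^{(1+\epsilon)r}\bigl||\tau|-|\xi|\bigr|^{br}\tau^{-lr}\bigl||\tau|-|\xi|\bigr|^{2-2r}$, and the exponent bookkeeping forces $l=\tfrac{2}{r}-1+b+\epsilon$ together with the constraint $l\le 2-\epsilon$ (so that $l\le s_1$), which with $b=\tfrac{1}{r}+$ is exactly $\epsilon<\tfrac{3}{2}(1-\tfrac{1}{r})$. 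The hyperbolic subcases are handled similarly. In other words, the ``angular gain'' you invoke is quantified by the exponent $B=2-2r<0$ in the Foschi--Klainerman formula, and without that computation (or an equivalent sharp dyadic-angular decomposition) the argument does not close. Your spatial $\widehat H^{s,r}$ product estimate alone is only enough for the $b=0$ output bound (the paper's preliminary inequality \eqref{*}); the passage to $b>\tfrac{1}{r}$ on the output genuinely requires the additional delta-integral analysis.
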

\begin{proof} By Lemma \ref{Lemma1.1} we obtain
\begin{equation}
\label{*}
\|uv\|_{X^r_{1+\epsilon,0}} \lesssim \|\nabla u\|_{X^r_{1-\epsilon,b}} \| v\|_{X^r_{2,b}}   \, .
\end{equation}
We reduce namely by the fractional Leibniz rule to the estimates
$$\|uv\|_{X^r_{0,0}} \lesssim \|\nabla u\|_{X^r_{1-\epsilon,b}} \|v\|_{X^r_{1-\epsilon,b}} $$
and
 $$\|uv\|_{X^r_{0,0}} \lesssim \| u\|_{\dot{X}^r_{1-2\epsilon,b}} \|\nabla v\|_{X^r_{1,b}} \, .$$
The first inequality follows by Lemma \ref{Lemma1.1} with $\alpha_1=1-\epsilon$ , $\alpha_2 = 2-\epsilon$ , so that $\alpha_1+\alpha_2 > \frac{3}{r}$ by the assumption $\epsilon < \frac{3}{2}(1-\frac{1}{r}) $ . The second inequality follows similarly with parameters $\alpha_1=1-2\epsilon$, $\alpha_2=2$ .

Next we apply the "hyperbolic Leibniz rule" (cf. \cite{AFS2}, chapter 4.2) :
$$||\tau|-|\xi|| \le ||\rho|-|\eta|| +||\tau-\rho| -|\xi-\eta|| + b_{\pm}(\xi,\eta) \, . $$
where 
$$ b_+(\xi,\eta) = |\eta|+|\xi-\eta|-|\xi| \quad , \quad b_-(\xi,\eta) = |\xi|-||\eta|-|\xi-\eta|| \, , $$
so that
\begin{equation}
\label {**}
\|D_-^b(uv) \|_{X^r_{1+\epsilon,0}} \lesssim \|B_{\pm}^b(uv)\|_{X^r_{1+\epsilon,0}} + \|(D_-^b u)v\|_{X^r_{1+\epsilon,0}} + \|u (D_-^b v)\|_{X^r_{1+\epsilon,0}} \, ,
\end{equation}
where $\widehat{D_-^b w}(\xi,\tau) = ||\xi|-|\tau||^b \widehat{w}(\xi,\tau)$ and $ \widehat{B_{\pm}^b w}(\xi,\tau) = b_{\pm}^b(\xi,\tau) \widehat{w}(\xi,\tau)$ . By Young and H\"older we obtain
\begin{align*}
&\|(D_-^b D^{1+\epsilon} u)v\|_{X^r_{0,0}} \lesssim \| (|\xi|^{1+\epsilon} \langle |\tau|-|\xi| \rangle^b \widehat{u}) \ast \widehat{v} \|_{L^{r'}_{\tau \xi}} \\
& \lesssim \| \langle \xi \rangle^{-(1-2\epsilon)}\|_{L^p_{\xi}} \| |\xi|^{1+\epsilon} \langle \xi \rangle^{1-2\epsilon} \langle|\tau|-|\xi|\rangle^b \widehat{u}\|_{L^{r'}_{\tau\xi}} \\
&\quad \cdot\|\langle|\tau|-|\xi|\rangle^{-b} |\xi|^{-1}\|_{L^r_{\tau} L^q_{\xi}} \| \langle \xi \rangle	^{-1}\|_{L^s_{\xi}} \| \langle|\tau|-|\xi|\rangle^b |\xi| \langle \xi \rangle \widehat{v}\|_{L^{r'}_{\tau \xi}} \\
& \lesssim \| \nabla u\|_{X^r_{1-\epsilon,b}} \|\nabla v\|_{X^r_{1,b}} \, .
\end{align*}
Here we need $1+\frac{1}{r'} = \frac{1}{p}+\frac{1}{r'} + \frac{1}{q}+\frac{1}{s}+\frac{1}{r'}\, \Leftrightarrow \, \frac{1}{p}+\frac{1}{q}+\frac{1}{s}=1-\frac{1}{r '}$. We choose $p= \frac{3}{1-2\epsilon}+$, $q=3\pm$ , so that $\frac{1}{s} = 1-\frac{1}{r'} - \frac{1-2\epsilon}{3} - \frac{1}{3} \pm = \frac{1}{3}-\frac{1}{r'}+\frac{2}{3}\epsilon \pm = \frac{1}{r}-\frac{2}{3}+\frac{2}{3}\epsilon \pm < \frac{1}{3}$ provided $\epsilon < \frac{3}{2}(1-\frac{1}{r})$, so that $s > 3$ , where + is chosen for $|\xi|\ge 1$ and -- for $|\xi| \le 1$ , so that all the integrals exist. Similarly we can also handle the term $\|(D_-^b  u)(D^{1+\epsilon}v)\|_{X^r_{0,0}}$ , so that by the fractional Leibniz rule we obtain the desired estimate for  $\|(D_-^b u)v\|_{\dot{X}^r_{1+\epsilon,0}} $ . Because the term $\|(D_-^b u)v\|_{X^r_{0,0}} $  can easily be estimated similarly, we obtain
$$ \|(D_-^b u) v\|_{X^r_{1+\epsilon,0}} \lesssim \|\nabla u\|_{X^r_{1-\epsilon,b}} \|\nabla v\|_{X^r_{1,b}} \, .$$ 
 By similar arguments we also obtain this bound for the term $\|u(D_-^b v)\|_{X^r_{1+\epsilon,0}}$ .

It remains to prove
\begin{equation}
\label{***}
 \|B_{\pm}^b(uv)\|_{X^r_{1+\epsilon,0}} \lesssim \|\nabla u\|_{X^r_{1-\epsilon,b}} \|\nabla v\|_{X^r_{1,b}} \, . 
\end{equation}
For the + -sign we are in the elliptic case and prove for suitable $l \le 2-\epsilon$ :
$$ I = |\xi|^{(1+\epsilon)r} ||\tau|-|\xi||^{br} \int \delta(\tau-|\eta|-|\xi-\eta|) |\eta|^{-lr} |\xi-\eta|^{-2r} d\eta \lesssim 1 \, . $$
By \cite{FK}, Prop. 4.3 we obtain by $\tau = |\eta|+|\xi-\eta| \ge |\xi|$ :
\begin{align*}
I &\sim |\xi|^{(1+\epsilon)r} ||\tau|-|\xi||^{br} \tau^A ||\tau|-|\xi||^B = |\xi|^{(1+\epsilon)r} ||\tau|-|\xi||^{br} \tau^{-lr} ||\tau|-|\xi||^{2-2r} \\
& \lesssim \tau^{br+2-(l+1)r+\epsilon r} =1 \, ,
\end{align*} 
because $A=\max(lr,2r,2) -(l+2)r = -lr$ , $B=2-2r$ for $l \le 2$ , and if we assume $l=\frac{2}{r}-1+b+\epsilon \le 2-\epsilon $ , which is fulfilled for $\epsilon < \frac{3}{2}(1-\frac{1}{r})$ .  As in Lemma \ref{Prop.1.1} this implies (\ref{***}) by the transfer principle.

For the -- -sign we are in the hyperbolic case. In the subcase $|\eta|+|\xi-\eta| \le 2|\xi|$  we have to prove
$$ I = I =|\xi|^{(1+\epsilon)r} ||\tau|-|\xi||^{br} \int_{|\eta|+|\xi-\eta| \le 2|\xi|} \delta(\tau-|\eta|+|\xi-\eta|) |\eta|^{-lr} |\xi-\eta|^{-2r} d\eta \lesssim 1 \, . $$
Now by \cite{FK}, Prop. 4.5 we obtain similarly as in the elliptic case in the region $0 \le \tau \le |\xi|$ :
\begin{align*}
I &\sim |\xi|^{(1+\epsilon)r} ||\tau|-|\xi||^{br} |\xi|^A ||\tau|-|\xi||^B = ||\tau|-|\xi||^{br+2-2r} |\xi|^{(1+\epsilon)r -lr} \\
&\lesssim |\xi|^{br+2-(l+1)r+\epsilon r} =1 \, ,
\end{align*} 
because $A=\max(2r,2) -(l+2)r = -lr$ , $B=2-2r$ , if we assume $l=\frac{2}{r}-1+b+\epsilon $. In the region $-|\xi| \le \tau < 0$ we obtain similarly with $A=\max(lr,2)-(l+2)r =2-(l+2)r$ and $B=0$ the same estimate.
In the subcase $|\eta|+|\xi-\eta| > 2|\xi|$ we use \cite{FK}, Lemma 4.4, which implies:
\begin{align*}
I &\sim |\xi|^{(1+\epsilon)r} ||\tau|-|\xi||^{br} \int_2^{\infty} (x+\frac{\tau}{|\xi|})^{-2r+1} (x+\frac{\tau}{|\xi|})^{-lr+1} dx \cdot |\xi|^{-(l+2)r+2} \\
&\lesssim |\xi|^{(1+\epsilon)r +br-(l+2)r+2} = 1 \, . 
\end{align*}
The integral converges, provided $(l+2)r -2 > 1 \, \Leftrightarrow \, l >\frac{3}{r}-2$ , which is the case. Thus we obtain (\ref{***})  by the transfer principle.
\end{proof}

Now we are prepared to handle the cubic terms.
\begin{lemma}
\label{Prop.1.6}
Let $r=1+$ and $b > \frac{1}{r}$ . The following estimates apply:
\begin{align*}
\|A_{\mu}A^{\mu} \phi\|_{X^r_{1,0}} &\lesssim \|\nabla A\|^2_{X^r_{1-\epsilon,b}} \|\phi\|_{X^r_{2,b}} \, , \\
\| A(|\phi|^2)\|_{X^r_{1,0}} & \lesssim \|\nabla A\|_{X^r_{1-\epsilon,b}} \|\phi\|^2_{X^r_{2,b}}
\end{align*}
for $0 < \epsilon < \frac{3}{2}(1-\frac{1}{r})$ .
\end{lemma}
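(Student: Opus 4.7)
Both estimates are trilinear in form, with the target space $X^r_{1,0}$ being rather weak (no weight in the hyperbolic variable, regularity only $1$). The plan is to regard each product as an iterated bilinear product and chain together the previously established bilinear bounds, most notably Lemma \ref{Lemma1.2}, which provides the key $\epsilon$-gain in regularity. As a first reduction, the continuous embedding $X^r_{1+\epsilon,b}\hookrightarrow X^r_{1,0}$ for $\epsilon,b\ge 0$ allows me to work with the stronger target norm $\|\cdot\|_{X^r_{1+\epsilon,b}}$, which is exactly the norm Lemma \ref{Lemma1.2} produces.

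For the first estimate I would group $A_{\mu}A^{\mu}\phi = A\cdot(A\phi)$ and apply Lemma \ref{Lemma1.2} with $u=A$, $v=A\phi$ to obtain
\[
\|A(A\phi)\|_{X^r_{1+\epsilon,b}}\lesssim \|\nabla A\|_{X^r_{1-\epsilon,b}}\,\|A\phi\|_{X^r_{2,b}}.
\]
The remaining task is to bound the intermediate product $\|A\phi\|_{X^r_{2,b}}$ by $\|\nabla A\|_{X^r_{1-\epsilon,b}}\|\phi\|_{X^r_{2,b}}$. This bound is not the direct output of Lemma \ref{Lemma1.2} (which gives only $X^r_{1+\epsilon,b}$), so it requires either a fractional Leibniz decomposition of $\Lambda^2(A\phi)$ — reducing to H\"older-Young type products in the style of Lemma \ref{Lemma1.1} — or a Fourier-Lebesgue Sobolev embedding $\widehat{H}^{s,r}\hookrightarrow L^\infty_x$ for $s$ slightly above $\frac{3}{r}$, used via the transfer principle to place one factor in $L^\infty_{tx}$. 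The slack provided by the admissible range $0<\epsilon<\tfrac{3}{2}(1-\tfrac{1}{r})$ is what permits all exponents to be chosen admissibly.

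For the second estimate $\|A|\phi|^2\|_{X^r_{1,0}}$ the structure is identical with the roles of $A$ and $\phi$ partially exchanged: apply Lemma \ref{Lemma1.2} with $u=A$, $v=\phi\bar\phi$, and then control $\|\phi\bar\phi\|_{X^r_{2,b}}$ by an analog of Corollary \ref{Cor.1.1} adapted to a pure pair product (rather than $\phi$ and $\nabla\phi$), again via fractional Leibniz and the bilinear machinery of Lemma \ref{Prop.1.5} or Lemma \ref{Lemma1.2}. The conjugation in $|\phi|^2$ is harmless because the $\widehat{H}^{s,r}$ and $X^r_{s,b,\pm}$ norms are insensitive to it (up to swapping the sign $\pm$). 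I expect the main difficulty — and the bulk of the technical effort — to lie precisely in this intermediate $X^r_{2,b}$ bound on the inner pair product, where one must carefully track low-frequency corrections, the $\epsilon$-slack, and the trilinear constraints in the style of the hyperbolic Leibniz rule calculation carried out inside the proof of Lemma \ref{Lemma1.2}.
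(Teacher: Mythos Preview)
Your plan has the two bilinear steps in the wrong order, and the resulting inner estimate is genuinely out of reach. After applying Lemma~\ref{Lemma1.2} to the \emph{outer} product you are left needing
\[
\|A\phi\|_{X^r_{2,b}} \lesssim \|\nabla A\|_{X^r_{1-\epsilon,b}}\,\|\phi\|_{X^r_{2,b}} \, .
\]
For $r=1+$ the scaling-critical exponent in $\widehat{H}^{\,\cdot,r}(\R^3)$ is $\tfrac{3}{r}\approx 3$, so a product of two factors at regularity $2-\epsilon$ and $2$ can reach at best roughly $4-\epsilon-\tfrac{3}{r}\approx 1-\epsilon$ derivatives by a Sobolev-type product rule; the hyperbolic weight buys a little extra (this is exactly what Lemma~\ref{Lemma1.2} quantifies, with output $1+\epsilon$), but nowhere near regularity $2$. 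Neither of your proposed workarounds closes this gap: the embedding $\widehat{H}^{s,r}\hookrightarrow L^\infty$ needs $s>\tfrac{3}{r}\approx 3$, which neither factor possesses, and a Leibniz decomposition of $\Lambda^2(A\phi)$ produces a term $(\Lambda^2 A)\phi$ in which $\Lambda^2 A$ sits at \emph{negative} regularity $-\epsilon$, so the same deficit reappears. Tracing through the proof of Lemma~\ref{Lemma1.2} with target $X^r_{2,b}$ instead of $X^r_{1+\epsilon,b}$ one finds the requirement $l\ge b+\tfrac{2}{r}>\tfrac{3}{r}$, incompatible with $l\le 2-\epsilon$.

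The remedy is to swap the roles of the two bilinear lemmas. Use Lemma~\ref{Lemma1.2} for the \emph{inner} product, which gives precisely $\|A\phi\|_{X^r_{1+\epsilon,b}}\lesssim\|\nabla A\|_{X^r_{1-\epsilon,b}}\|\phi\|_{X^r_{2,b}}$ (and, crucially, carries the positive $b$-weight on the output). For the \emph{outer} product $A\cdot(A\phi)$ you then only need to land in $X^r_{1,0}$, with the second factor available in $X^r_{1+\epsilon,b}$ rather than $X^r_{2,b}$; this is handled by Lemma~\ref{Prop.1.5} (with $\alpha_0=1$, $\alpha_1=2+\epsilon$, $\alpha_2=\tfrac{2}{r}-\epsilon$) for the homogeneous part and Lemma~\ref{Lemma1.1} for the low-frequency correction. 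The second estimate $\|A|\phi|^2\|_{X^r_{1,0}}$ follows by the same scheme.
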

\begin{proof} 
We obtain by  Lemma \ref{Prop.1.5} with $\alpha_0 =1$ , $\alpha_1=2+\epsilon$ , $\alpha_2 = \frac{2}{r}-\epsilon$ :
$$\|A_{\mu} A^{\mu} \phi\|_{\dot{X}^r_{1,0}} \lesssim \|A\|_{\dot{X}^r_{\frac{2}{r}-\epsilon,b}} \|A \phi\|_{\dot{X}^r_{1+\epsilon,b}} $$
and by Lemma \ref{Lemma1.1} :
$$\|A_{\mu} A^{\mu} \phi\|_{{X}^r_{0,0}} \lesssim \|\nabla A\|_{{X}^r_{1-\epsilon,b}} \|\nabla(A \phi)\|_{\dot{X}^r_{\epsilon,b}} \, . $$
This implies
\begin{align*}
\|A_{\mu} A^{\mu} \phi\|_{{X}^r_{1,0}} &\lesssim \|\nabla A\|_{{X}^r_{1-\epsilon,b}} \|\nabla(A \phi)\|_{\dot{X}^r_{\epsilon,b}} \\
& \lesssim \|\nabla A\|_{{X}^r_{1-\epsilon,b}}^2 \| \phi\|_{X^r_{2,b}} \, ,
\end{align*}
where we applied Lemma \ref{Lemma1.2} in the last step.
The second estimate is proven in the same way.
\end{proof}

We now consider the nonlinearities in the equations (\ref{27}) and (\ref{28}) for $F_{\mu \nu}$ .
\begin{lemma}
\label{Prop. 1.7}
Let $r=1+$ and $ b > \frac{1}{r}$ . The following estimates apply:
\begin{align*}
\| \nabla \phi \times \overline{\nabla \phi}\|_{X^r_{0,0}} &\lesssim \|\nabla \phi\|_{X^r_{1,b}}^2 \, , \\
\|\partial_t \phi \overline{\nabla \phi} - \nabla \phi \overline{\partial_t \phi}\|_{X^r_{0,0}} & \lesssim \|\partial_t \phi\|_{X^r_{1,b}} \|\nabla \phi\|_{X^r_{1,b}} \, .
\end{align*}
\end{lemma}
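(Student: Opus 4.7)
The plan is to recognize both quantities as Klainerman--Machedon null forms and invoke Lemmas~\ref{Prop.1.1} and~\ref{Prop.1.2}. The components of $\nabla\phi\times\overline{\nabla\phi}$ have the form $\partial_i\phi\,\overline{\partial_j\phi}-\partial_j\phi\,\overline{\partial_i\phi}=Q_{ij}(\phi,\overline{\phi})$, and the components of the second expression are $Q_{0j}(\phi,\overline{\phi})$; by the identifications $Q_{ij}(u,v)=q_{ij}(Du,Dv)$ and $Q_{0j}(u,v)=q_{0j}(Du,Dv)$ recalled at the start of this section, these equal $q_{ij}(D\phi,D\overline{\phi})$ and $q_{0j}(D\phi,D\overline{\phi})$ respectively.

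I apply Lemma~\ref{Prop.1.1} for the first expression and Lemma~\ref{Prop.1.2} for the second, in both cases with $u=D\phi$, $v=D\overline{\phi}$ and the symmetric parameter choice $\alpha_0=0$, $\alpha_1=\alpha_2=\frac{1}{r}$. All five hypotheses are satisfied for $r=1+$: the scaling identity $\alpha_1+\alpha_2-\alpha_0=\frac{2}{r}$ holds by construction; $\alpha_1+\alpha_2=\frac{2}{r}>\frac{3}{r}-1$ because $r>1$; the sign and ceiling constraints $0\le\alpha_i\le\frac{2}{r}$ are immediate; and the excluded value $\frac{2}{r}-\frac{1}{2}\approx\frac{3}{2}$ is not attained by $\alpha_i=\frac{1}{r}\approx 1$. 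These lemmas yield
$$
\|q_{ij}(D\phi,D\overline{\phi})\|_{\dot{X}^r_{0,0}},\ \|q_{0j}(D\phi,D\overline{\phi})\|_{\dot{X}^r_{0,0}}\ \lesssim\ \|D\phi\|_{\dot{X}^r_{1/r,b}}\|D\overline{\phi}\|_{\dot{X}^r_{1/r,b}}.
$$

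Since the weights for $s=0$ agree one has $\|\cdot\|_{X^r_{0,0}}=\|\cdot\|_{\dot{X}^r_{0,0}}$, and from $|\xi|^{1/r}\le\langle\xi\rangle$ (valid since $\frac{1}{r}\le 1$) one gets $\|D\phi\|_{\dot{X}^r_{1/r,b}}=\||\xi|^{1+1/r}\langle|\tau|-|\xi|\rangle^b\hat\phi\|_{L^{r'}}\le\|\langle\xi\rangle|\xi|\langle|\tau|-|\xi|\rangle^b\hat\phi\|_{L^{r'}}=\|\nabla\phi\|_{X^r_{1,b}}$. The first claimed estimate is then immediate: $\|\nabla\phi\times\overline{\nabla\phi}\|_{X^r_{0,0}}\lesssim\|\nabla\phi\|_{X^r_{1,b}}^2$. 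For the second expression the same chain produces the bound $\|\nabla\phi\|_{X^r_{1,b}}^2$; to sharpen this to the stated mixed product $\|\partial_t\phi\|_{X^r_{1,b}}\|\nabla\phi\|_{X^r_{1,b}}$---a form that reflects the degeneration of $Q_{0j}(\phi,\overline{\phi})$ whenever either $\partial_t\phi$ or $\nabla\phi$ vanishes---one reroutes the $\tau^{1/2}$-factor in the Foschi--Klainerman bound $|\widehat{q_{0j}}(\eta,\xi-\eta)|\lesssim\tau^{1/2}(\tau-|\xi|)^{1/2}/(|\eta|^{1/2}|\xi-\eta|^{1/2})$ asymmetrically onto the first bilinear factor, so that the $\tau$-weight is absorbed into $\|\partial_t\phi\|_{X^r_{1,b}}$ while the remaining spatial weights fit into $\|\nabla\phi\|_{X^r_{1,b}}$; the hyperbolic symbol is treated analogously using $|\xi|^{1/2}\le\langle\xi\rangle$.

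The principal technical obstacle is verifying that this asymmetric redistribution of the $\tau$-weight survives the Foschi--Klainerman light-cone integral underlying Lemma~\ref{Prop.1.2} while still respecting the admissibility bounds $\alpha_i\le\frac{2}{r}$ and the case split at $\alpha_i=\frac{2}{r}-\frac{1}{2}$; the scaling relation $\alpha_1+\alpha_2-\alpha_0=\frac{2}{r}$ is automatically preserved, and once the bookkeeping of exponents has been adjusted the arithmetic parallels the symmetric proof of Lemma~\ref{Prop.1.2} verbatim.
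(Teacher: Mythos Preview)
Your treatment of the first estimate is exactly the paper's: recognize $\nabla\phi\times\overline{\nabla\phi}$ as a combination of $Q_{ij}$-forms, apply Lemma~\ref{Prop.1.1} with $\alpha_0=0$, $\alpha_1=\alpha_2=\tfrac{1}{r}$, verify the hypotheses, and absorb $|\xi|^{1/r}\le\langle\xi\rangle$ to pass from $\dot X^r_{1/r,b}$ to $X^r_{1,b}$.

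On the second estimate you are in fact more careful than the paper. The paper's entire argument is the sentence ``the second estimate results from Lemma~\ref{Prop.1.2}'', with the same parameters. As you correctly point out, Lemma~\ref{Prop.1.2} applied to $u=D\phi$, $v=D\overline\phi$ produces $\|\nabla\phi\|_{X^r_{1,b}}^2$ on the right, not the mixed product $\|\partial_t\phi\|_{X^r_{1,b}}\|\nabla\phi\|_{X^r_{1,b}}$; the two quantities are not comparable for general $\phi$. Your proposed repair---asymmetrically routing the $\tau^{1/2}$-weight onto the $\partial_t\phi$ factor---is only sketched, and your final paragraph concedes that the bookkeeping has not been carried out.

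You need not pursue that repair. In the only place this lemma is used (Lemma~\ref{Prop.1.12}, feeding into Theorem~\ref{Theorem0.2}), the estimate is applied to $\phi=\phi_++\phi_-$ with $\phi_\pm\in X^r_{s,b,\pm}$, and there both $\|\partial_t\phi\|_{X^r_{s-1,b}}$ and $\|\nabla\phi\|_{X^r_{s-1,b}}$ are dominated by $\sum_\pm\|\phi_\pm\|_{X^r_{s,b,\pm}}$. Hence the symmetric bound $\|\nabla\phi\|_{X^r_{1,b}}^2$---which you have established cleanly---is already sufficient for every downstream purpose, and the last two paragraphs of your argument can simply be dropped.
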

\begin{proof}
For the first estimate we use Lemma \ref{Prop.1.1} with $\alpha_0 =0$ , $\alpha_1=\alpha_2= \frac{1}{r} < 1$ and $\alpha_1+\alpha_2 > \frac{3}{r}-1$ . The second estimate results from Lemma \ref{Prop.1.2}.
\end{proof}

The cubic terms are handled by the following lemma.
\begin{lemma}
\label{Prop.1.8}
For $ r = 1+$ and $b > \frac{1}{r}$ the following estimates are true:
\begin{align*}
\|\nabla (A|\phi|^2)\|_{X^r_{0,0}} & \lesssim \|\nabla A\|_{X^r_{1-\epsilon,b}} \|\phi\|_{X^r_{2,b}}^2 \, , \\
\|\partial_t (A|\phi|^2)\|_{X^r_{0,0}} & \lesssim \|\nabla A\|_{X^r_{1-\epsilon,b}}  \|\phi\|_{X^r_{2,b}}^2
\end{align*}
for $0<\epsilon < \frac{3}{2}(1-\frac{1}{r})$ . 
\end{lemma}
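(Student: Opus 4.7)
The first estimate is immediate from the second inequality of Lemma~\ref{Prop.1.6}: since $|\xi|\le\langle\xi\rangle$ at the Fourier symbol level, we have $\|\nabla w\|_{X^r_{0,0}}\le\|w\|_{X^r_{1,0}}$, which applied with $w=A|\phi|^2$ yields the claim.

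For the second estimate, my plan is to go through the transfer principle (Proposition~\ref{Prop.0.1}). The target norm $X^r_{0,0}$ coincides with the Fourier-Lebesgue space $\widehat L^{r'}_t(\widehat L^{r'}_x)$, so the transfer machinery applies and it suffices to prove the corresponding free-wave bound. Writing $u=e^{\pm itD}a$ and $v=e^{\pm it\Lambda_m}f$, we have $\partial_t u=\pm iDu$ and $\partial_t v=\pm i\Lambda_m v$, so after the Leibniz rule
\begin{equation*}
\partial_t(u\,|v|^2)=(\partial_t u)\,|v|^2+u(\partial_t v)\,\bar v+u\,v\,\overline{\partial_t v}
\end{equation*}
each term becomes a cubic in which exactly one factor has been differentiated spatially (by $D$ or $\Lambda_m$). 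Up to the bounded multiplier $\Lambda_m/\langle D\rangle$, this is the very same multilinear object whose $\widehat L^{r'}_{t,x}$-bound underlies the first estimate of the present lemma (i.e., the free-wave version of Lemma~\ref{Prop.1.6}). Transferring then gives
\begin{equation*}
\|\partial_t(A|\phi|^2)\|_{X^r_{0,0}}\lesssim\|\nabla A\|_{X^r_{1-\epsilon,b}}\|\phi\|_{X^r_{2,b}}^2,
\end{equation*}
as required.

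The hard part will be verifying the free-wave cubic bound. In the two terms where the spatial derivative lands on a $\phi$ factor, the extra $\Lambda_m$ is absorbed by the ample regularity $\|\phi\|_{X^r_{2,b}}$, which carries two spare spatial derivatives. In the remaining term, where the derivative lands on $A$, one is reduced to bounding $\|(\nabla A)|\phi|^2\|_{X^r_{0,0}}$; by the spatial Leibniz rule $\nabla(A|\phi|^2)=(\nabla A)|\phi|^2+A\,\nabla(|\phi|^2)$, this is controlled by the first estimate of the lemma together with $\|A\,\nabla(|\phi|^2)\|_{X^r_{0,0}}$, and the latter is handled again by the chain Lemma~\ref{Prop.1.5}$\to$Lemma~\ref{Lemma1.1}$\to$Lemma~\ref{Lemma1.2} employed in the proof of Lemma~\ref{Prop.1.6}. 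The strict inequality $b>1/r$ together with the $\epsilon$-slack built into $\|\nabla A\|_{X^r_{1-\epsilon,b}}$ are precisely what make the H\"older--Young exponents admissible, exactly as in the corresponding step of Lemma~\ref{Prop.1.6}.
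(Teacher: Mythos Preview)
Your first estimate is fine and matches the paper: $\|\nabla w\|_{X^r_{0,0}}\le\|w\|_{X^r_{1,0}}$ together with Lemma~\ref{Prop.1.6} does the job.

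For the second estimate, your transfer-principle-plus-Leibniz route has a real gap. After writing $\partial_t(A|\phi|^2)$ on free waves as a sum of $(DA)|\phi|^2$, $A(\Lambda_m\phi)\bar\phi$, $A\phi(\Lambda_m\bar\phi)$, you must bound each of these trilinear objects. But none of them is covered by the chain Lemma~\ref{Prop.1.5}\,/\,Lemma~\ref{Lemma1.1}\,/\,Lemma~\ref{Lemma1.2}. Concretely, to control $\|A\,\nabla(|\phi|^2)\|_{X^r_{0,0}}$ via Lemma~\ref{Lemma1.1} with $v=A$ (so that $\|\nabla A\|_{X^r_{1-\epsilon,b}}$ appears) you need $\|\nabla(|\phi|^2)\|_{\dot X^r_{\alpha_1,b}}$ with $\alpha_1>\tfrac{3}{r}-2+\epsilon\approx 1+\epsilon$, i.e.\ $\||\phi|^2\|_{X^r_{\alpha,b}}$ with $\alpha>2+\epsilon$; Lemma~\ref{Lemma1.2} only delivers $\alpha=1+\epsilon$. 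The same one-derivative shortfall occurs for the other groupings and for the terms where $\Lambda_m$ lands on $\phi$ (there Lemma~\ref{Lemma1.2} would require $\|\Lambda_m\phi\|_{X^r_{2,b}}\sim\|\phi\|_{X^r_{3,b}}$). The point is that the first estimate bounds the weight $|\xi|=|\xi_1+\xi_2+\xi_3|$, while your Leibniz terms carry the larger weight $|\xi_1|+\langle\xi_2\rangle+\langle\xi_3\rangle$; these are not comparable, so the first estimate does not imply bounds on the individual pieces.

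The paper avoids this by \emph{not} distributing $\partial_t$. It reopens the proof of Lemma~\ref{Prop.1.5} with $\alpha_0=1$ and replaces the spatial weight $|\xi|^r$ by the temporal weight $\tau^r$, obtaining the bilinear estimate
\[
\|\partial_t(uv)\|_{X^r_{0,0}}\lesssim\|u\|_{\dot X^r_{\frac{2}{r}-\epsilon,b}}\|v\|_{\dot X^r_{1+\epsilon,b}}.
\]
In the elliptic region $\tau\ge|\xi|$, so the Foschi--Klainerman integral yields the same bound; in the hyperbolic region $|\tau|\le|\xi|$, so the previous bound is only strengthened. One then applies this with $u=A$, $v=|\phi|^2$ and invokes Lemma~\ref{Lemma1.2} for $\||\phi|^2\|_{X^r_{1+\epsilon,b}}$ exactly as in Lemma~\ref{Prop.1.6}. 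Keeping $\partial_t$ outside the product at the bilinear stage is what makes the argument close.
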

\begin{proof}
The first  estimate is proven exactly like Lemma \ref{Prop.1.6}. For the last estimate we modify the proof of Lemma \ref{Prop.1.5} for the case $\alpha_0=1$ in order to prove:
$$\|\partial_t(\phi \nabla \phi)\|_{X^r_{0,0}} \lesssim \|\phi\|_{\dot{X}^r_{\frac{2}{r}-\epsilon,b}} \|\nabla \phi\|_{\dot{X}^r_{1+\epsilon,b}} \, . $$
We simply replace $|\xi|^{\alpha_0 r} = |\xi|^r$ by $\tau^r$ and obtain the same estimate for $I$ in the elliptic case, whereas in the hyperbolic case we use $|\tau| \le |\xi|$ , which leads to the previous bounds for $I$. Using this new estimate we may handle the term $\|\partial_t(A|\phi|^2)\|_{X^r_{0,0}}$ exactly like $\|A |\phi|^2\|_{\dot{X}^r_{1,0}} $ in Lemma \ref{Prop.1.6}.
\end{proof}

Now we interpolate the bi- and trilinear estimates in $X^r_{s,b}$-spaces for $r=1+$ just proven and for $r=2$, where for the latter we rely on the results in \cite{P}.

Let $\delta > 0$ be given and $s=\frac{5}{2r}-\half+\delta$ , $ l=\frac{3}{r}-1+\delta$ . Then for $r>1$ sufficiently close to 1 we have $\delta \ge \frac{5}{2}-\frac{5}{2r}$ , so that $\delta = \frac{5}{2}-\frac{5}{2r} + \omega $ with
$\omega \ge 0$ . For $\omega =0$ we have $s=2$ , $l=\frac{3}{2}+\frac{1}{2r}$ .

\begin{lemma}
\label{Prop.1.9}
Let $1 < r \le 2$ , $ b = \frac{1}{r}+$ , $\delta > 0$ , $s=\frac{5}{2r} - \half+\delta$ , $l=\frac{3}{r}-1+\delta$ .The following estimate applies:
$$\|A^{\mu} \partial_{\mu} \phi\|_{X^r_{s-1,b-1+}} \lesssim \| \nabla A\|_{X^r_{l-1,1-}} \|\phi\|_{X^r_{s,b}} \, . $$
\end{lemma}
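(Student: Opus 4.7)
The plan is to establish the estimate via bilinear complex interpolation between the two endpoints $r = 1+$ and $r = 2$, exploiting the interpolation identity for the spaces $X^{r}_{s,b,\phi}$ recorded in (\ref{102}). The setup is deliberately designed to make the endpoint exponents line up: at $r_0 = 1+$ the prescribed values are $s_0 = 2+\delta$, $l_0 = 2+\delta$, and at $r_1 = 2$ they are $s_1 = \tfrac{3}{4}+\delta$, $l_1 = \tfrac{1}{2}+\delta$. Writing $1/r = (1-\Theta)/r_0 + \Theta/r_1$ gives $\Theta = 2 - 2/r$, and a direct calculation shows $(1-\Theta)s_0 + \Theta s_1 = \tfrac{5}{2r} - \tfrac{1}{2} + \delta = s$ and $(1-\Theta)l_0 + \Theta l_1 = \tfrac{3}{r}-1+\delta = l$, while the $b$-indices $\tfrac{1}{r}+$ and $1-$ also interpolate consistently.

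The $r = 1+$ endpoint is essentially Lemma \ref{Prop.1.4}. At $r_0=1+$ one has $s_0-1 \approx 1$, $l_0-1 \approx 1$, so the inequality $\|A^{\mu}\partial_{\mu}\phi\|_{X^{r_0}_{1,0}} \lesssim \|\nabla A\|_{X^{r_0}_{\frac{2}{r_0}-1,b_0}}\|\phi\|_{X^{r_0}_{2,b_0}}$ (with $b_0 > 1/r_0$) from Lemma \ref{Prop.1.4} gives exactly the required bound up to spending the $\delta$-margin in the spatial indices to absorb the small target weight $b-1+$ on the left, and to adjust the weight on $\nabla A$ from $b_0$ down to $1-$ via monotonicity of the $X^{r}_{s,b}$-norm in $b$ (remembering that at $r_0=1+$ the values $b_0 > 1/r_0$ and $1-$ lie close together and can be compared after absorbing the slack in $s-1$).

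At the $r=2$ endpoint the corresponding estimate is one of the central bilinear bounds established in the author's earlier $L^{2}$-based treatment \cite{P}. There the very same null-form decomposition used above --- the Hodge splitting $A = A^{cf}+A^{df}$, reduction of the curl-free part $A^{cf}$ via the Lorenz relation $\partial_t A_0 = \nabla\cdot A$ to expressions of the shape $a_{\pm_1,\pm_2}$ whose symbols are controlled by $1+|\xi-\eta|\,q_0(\pm_1\eta,\pm_2(\xi-\eta))$, and the $q_{ij}$-type symbol bound on the divergence-free part --- is combined with the Foschi--Klainerman $L^{2}$ null-form estimates to produce $\|A^{\mu}\partial_{\mu}\phi\|_{X^{2}_{s_1-1,b_1-1+}}\lesssim \|\nabla A\|_{X^{2}_{l_1-1,1-}}\|\phi\|_{X^{2}_{s_1,b_1}}$.

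With both endpoints available, bilinear complex interpolation along the analytic scale $\{X^{r}_{s,b,\phi}\}_{1<r\le 2}$ furnished by (\ref{102}) yields the claimed estimate for every intermediate $r$. The main technical obstacle I expect is the uniform bookkeeping of the various "$\pm$" epsilons across the interpolation: at $r_0=1+$ the values $b_0 \approx 1$ and $1-$ sit on opposite sides of $1$ and the target weight $b-1+$ is barely positive, so one must check that the $\delta$-cushion (together with the $\epsilon$'s hidden in $b=\frac{1}{r}+$ and $1-$) is large enough to close the endpoint estimate without violating the hypotheses $b_0 > 1/r_0$ in Lemma \ref{Prop.1.4} and the analogous conditions at $r_1=2$. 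Once these margins are verified, the interpolation step itself is standard and the lemma follows.
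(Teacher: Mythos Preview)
Your proposal is correct and follows essentially the same route as the paper: Lemma \ref{Prop.1.4} for the $r=1+$ endpoint, \cite{P} (Chapter 5, Claim 1) for the $r=2$ endpoint, and bilinear complex interpolation with parameter $\Theta = 2-\tfrac{2}{r}$. The only cosmetic difference is that the paper first establishes the $r=1+$ estimate at $s=2$, $l=\tfrac{3}{2}+\tfrac{1}{2r}$ (i.e.\ at $\omega=0$ in the notation preceding the lemma) and then extends to the given $\delta$ by the fractional Leibniz rule, whereas you absorb the $\delta$-slack directly; your flagged concern about the epsilon bookkeeping at $r_0=1+$ is real but resolvable by the same ordering of small parameters implicit in the paper.
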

\begin{proof}
The result for $r=2$ is given in \cite{P}, Chapter 5, Claim 1:
$$\|A^{\mu} \partial_{\mu} \phi\|_{X^2_{-\frac{1}{4}+\epsilon,-\half++}} \lesssim \| \nabla A\|_{X^2_{-\half+\epsilon,1-}} \|\phi\|_{X^2_{\frac{3}{4}+\epsilon,\half+}}  $$
for arbitrary $ \epsilon > 0$ .
For $r=1+$ , $b > \frac{1}{r}$ and $s=2$ , $l=\frac{3}{2}+\frac{1}{2r}$ we use Lemma \ref{Prop.1.4}:
$$\|A^{\mu} \partial_{\mu} \phi\|_{X^r_{s-1,0}} \lesssim \| \nabla A\|_{X^r_{\frac{2}{r}-1,b}} \|\phi\|_{X^r_{s,b}} \lesssim \| \nabla A\|_{X^r_{l-1,b}} \|\phi\|_{X^r_{s,b}}         \, , $$
because $\frac{2}{r}-1 \le \half+\frac{1}{2r} =l-1$ . By the fractional Leibniz rule this inequality remains true for $ \omega > 0$ , thus for the given $\delta$ and $r > 1$ close enough to 1.
By bilinear complex interpolation with interpolation parameter $\theta = 2-\frac{2}{r}$ we obtain in the whole range $1 < r \le 2$ and $b > \frac{1}{r}$ the claimed estimate. 
\end{proof}

\begin{lemma}
\label{Prop.1.10}
Let $r,b,s,l$ be given as in Lemma \ref{Prop.1.9}. The following estimate applies:
$$ \|\phi \nabla \phi\|_{X^r_{l-1,0}} +  \|\phi \Lambda_m \phi\|_{X^r_{l-1,0}}\lesssim \|\phi\|^2_{X^r_{s,b}} \, . $$
\end{lemma}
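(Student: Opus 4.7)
Following the two-endpoint strategy used in Lemma \ref{Prop.1.9}, my plan is to establish the estimate at $r=2$ (with parameters $s=\frac{3}{4}+\delta$, $l-1=-\frac{1}{2}+\delta$) and at $r=1+$ (where, taking $\omega=0$, we have $s=2$ and $l-1=\frac{1}{2}+\frac{1}{2r}$), and then to bilinear-complex-interpolate between the two. At $r=2$, the bound
\[
\|\phi\nabla\phi\|_{X^2_{-\frac{1}{2}+\delta,0}} \;\lesssim\; \|\phi\|^2_{X^2_{\frac{3}{4}+\delta,\frac{1}{2}+}}
\]
and its $\Lambda_m$-analog are contained in the multilinear estimates for the current $j_\mu = \mathrm{Im}(\phi\overline{\partial\phi})$ carried out in \cite{P} (essentially the estimates needed to close the iteration for the $A$-equation in the $L^2$-based setting).

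For the endpoint $r=1+$, I would apply Lemma \ref{Prop.1.5} with $\alpha_0=l-1=\frac{1}{2}+\frac{1}{2r}$, $\alpha_2=\frac{2}{r}-\varepsilon$ and $\alpha_1=\frac{3}{2}+\frac{1}{2r}+\varepsilon$. The scaling identity $(\alpha_0-\alpha_1-\alpha_2+1)r+2=0$ is straightforward to verify, the upper bounds $\alpha_1<1+\frac{2}{r}$ and $\alpha_2<\frac{2}{r}$ hold for $r$ close to $1$ and $\varepsilon$ small, and $(\alpha_1-1)+\alpha_2=\frac{1}{2}+\frac{5}{2r}>\frac{3}{r}$ is equivalent to $r>1$. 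This yields the homogeneous estimate
\[
\|\phi\nabla\phi\|_{\dot{X}^r_{l-1,0}} \;\lesssim\; \|\phi\|_{\dot{X}^r_{\frac{2}{r}-\varepsilon,b}}\,\|\nabla\phi\|_{\dot{X}^r_{\frac{1}{2}+\frac{1}{2r}+\varepsilon,b}} \;\lesssim\; \|\phi\|^2_{X^r_{2,b}}.
\]
The passage from homogeneous to inhomogeneous (i.e., from $\dot{X}^r$ to $X^r$) follows the template of Corollary \ref{Cor.1.1}: Lemma \ref{Lemma1.1} applied with $\alpha_1<\frac{3}{r}-1$ and $\alpha_2=2$ handles the low-frequency regime, and the two bounds are combined via the fractional Leibniz rule exactly as in the proof of Lemma \ref{Lemma1.2}. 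The term $\phi\,\Lambda_m\phi$ is treated by splitting $\Lambda_m$ into its high- and low-frequency pieces: on high frequencies the symbol of $\Lambda_m$ is comparable to that of $|\nabla|$ so the estimate reduces to the one just obtained, and on low frequencies $\Lambda_m$ is bounded and the remaining quadratic bound $\|\phi^2\|_{X^r_{l-1,0}}\lesssim \|\phi\|_{X^r_{s,b}}^2$ follows directly from Lemma \ref{Lemma1.1}.

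Having the estimate at $r=1+$ with $(s,l)=(2,\frac{3}{2}+\frac{1}{2r})$ and at $r=2$ with $(s,l)=(\frac{3}{4}+\delta,\frac{1}{2}+\delta)$, I would promote both to arbitrary $\omega\ge 0$ (i.e., to all admissible $\delta$) by the fractional Leibniz rule, trading the extra regularity on the left-hand side for matching regularity on the two input factors. Bilinear complex interpolation in the scale (\ref{102}) with interpolation parameter $\theta=2-\tfrac{2}{r}$ then delivers the claimed estimate uniformly in the full range $1<r\le 2$ with $b=\tfrac{1}{r}+$.

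The main obstacle I anticipate is not any single one of these steps in isolation, but the bookkeeping at $r=1+$: one must carefully split into the elliptic and hyperbolic cases already coded inside Lemma \ref{Prop.1.5}, and then independently arrange the inhomogeneous and the $\Lambda_m$-versus-$\nabla$ reductions without losing the borderline gain $\frac{1}{2}>\frac{1}{2r}$ that makes the case $r>1$ work in the first place. Once those reductions are aligned, the interpolation step is routine.
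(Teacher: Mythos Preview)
Your proposal is correct and follows essentially the same approach as the paper: the $r=2$ endpoint is taken from \cite{P}, the $r=1+$ endpoint comes from Lemma~\ref{Prop.1.5} combined with Lemma~\ref{Lemma1.1} for the low-frequency contribution, and bilinear complex interpolation closes the range. The only cosmetic difference is that the paper routes the $r=1+$ step through Corollary~\ref{Cor.1.1} (i.e.\ it takes $\alpha_0=1-\epsilon$ in Lemma~\ref{Prop.1.5} and then uses $l-1=\tfrac12+\tfrac{1}{2r}\le 1-\epsilon$), whereas you apply Lemma~\ref{Prop.1.5} directly with $\alpha_0=l-1$; your parameter check is correct and this is an immaterial variation.
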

\begin{proof}
The estimate for $r=2$ is given by \cite{P}, Chapter 5, Claim 3:
$$ \|D^{-1}(\phi \nabla \phi)\|_{X^2_{\half+\epsilon,0}} \lesssim \|\phi\|^2_{X^2_{\frac{3}{4}+\epsilon,\half+}} \, .$$ 
For $r=1+$ , $b > \frac{1}{r}$ and $s=2$ , $l=\frac{3}{2}+\frac{1}{2r}$ we obtain by Cor. \ref{Cor.1.1}: 
$$\|\phi \nabla \phi\|_{X^r_{l-1,0}} \lesssim \|\phi \nabla \phi\|_{X^r_{1-\epsilon,0}} \lesssim \|\phi\|_{X^r_{s,b}}^2 \, , $$
where we remark that $\nabla \phi$ may be replaced by $\Lambda_m \phi$ . As before interpolation gives the claimed estimate.
\end{proof}

\begin{lemma}
\label{Prop.1.11}
For $r,b,s,l$ as in Lemma \ref{Prop.1.9} the following estimates apply:
\begin{align*}
\|A_{\mu}A^{\mu} \phi\|_{X^r_{s-1,b-1+}} & \lesssim \|\nabla A\|_{X^r_{l-1,b}}^2 \|\phi\|_{X^r_{s,b}} \, , \\
\|A |\phi|^2\|_{X^r_{l-1,0}} & \lesssim \|\nabla A\|_{X^r_{l-1,b}} \|\phi\|_{X^r_{s,b}}^2 \, .
\end{align*}
\end{lemma}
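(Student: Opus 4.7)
The plan is to mirror the interpolation template from Lemmas \ref{Prop.1.9} and \ref{Prop.1.10}: establish each of the two cubic estimates at the endpoints $r=2$ and $r=1+$ (at the latter initially for the reduced parameters $\omega=0$, where $s=2$ and $l=\frac{3}{2}+\frac{1}{2r}$), and then pass to the full range $1<r\le 2$ by trilinear complex interpolation with parameter $\theta = 2-\frac{2}{r}$.

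At $r=1+$ Lemma \ref{Prop.1.6} directly supplies
$$ \|A_{\mu}A^{\mu}\phi\|_{X^r_{1,0}} \lesssim \|\nabla A\|^2_{X^r_{1-\epsilon,b}}\|\phi\|_{X^r_{2,b}}, \qquad \|A|\phi|^2\|_{X^r_{1,0}} \lesssim \|\nabla A\|_{X^r_{1-\epsilon,b}}\|\phi\|^2_{X^r_{2,b}}. $$
Since $s-1 = 1$ and $1-\epsilon \le \half + \frac{1}{2r} = l-1$ for $r$ sufficiently close to $1$, monotonicity of $X^r_{s,b}$ in the regularity index on the right, together with the embedding $X^r_{s-1,0} \hookrightarrow X^r_{s-1,b-1+}$ (valid for $b-1+\le 0$, arranged by taking every $\epsilon$-loss much smaller than $1-\frac{1}{r}$), produces the targeted endpoint estimates for $\omega=0$. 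The extension to $\omega>0$ is the same fractional Leibniz argument used in Lemma \ref{Prop.1.9}: the additional $\omega$ derivatives on the output side are absorbed into $\|\phi\|_{X^r_{s,b}}$.

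At the opposite endpoint $r=2$ the two inequalities are the cubic trilinear estimates established in \cite{P}, Chapter 5, the analogues of Claim 1 invoked in Lemma \ref{Prop.1.9}: schematically,
$$ \|A_{\mu}A^{\mu}\phi\|_{X^2_{-\frac{1}{4}+\epsilon,-\half++}} \lesssim \|\nabla A\|^2_{X^2_{-\half+\epsilon,1-}}\|\phi\|_{X^2_{\frac{3}{4}+\epsilon,\half+}}, $$
and the corresponding estimate for $A|\phi|^2$ with output $(l-1,0) = (-\half+\epsilon,0)$. Trilinear complex interpolation between the two endpoints with $\theta = 2-\frac{2}{r}$ then delivers both inequalities as stated throughout $1<r\le 2$.

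The main subtlety, as in Lemma \ref{Prop.1.9}, is tracking the $b$-loss on the output: the $r=2$ endpoint must be invoked in its slightly sharper ``$b-1++$'' form so that after interpolation the $+$-loss remains uniform in $r$ and lands precisely at $b-1+$. Beyond this bookkeeping, the argument is entirely parallel to Lemma \ref{Prop.1.9} and introduces no new multilinear analysis.
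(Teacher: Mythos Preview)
Your proposal is correct and follows essentially the same route as the paper: Lemma \ref{Prop.1.6} at the $r=1+$ endpoint (with the observation $1-\epsilon \le l-1$ and $l-1\le 1$ to match indices) combined with the $r=2$ cubic estimates from \cite{P}, Chapter 5 (there labelled Claims 4 and 5 rather than Claim 1), and then trilinear interpolation with $\theta=2-\frac{2}{r}$. The only cosmetic difference is that the paper cites Lemma \ref{Prop.1.8} rather than Lemma \ref{Prop.1.6} for the second estimate at $r=1+$, but the relevant bound is the same.
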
 
\begin{proof}
We interpolate between \cite{P}, Chapter 5, Claim 4 and Claim 5 and
Lemma \ref{Prop.1.6}, which implies for $ r=1+$ , $s=2$ , $l=\frac{3}{2}+\frac{1}{2r}$ :
$$\|A_{\mu}A^{\mu} \phi\|_{X^r_{s-1,b-1+}}  \lesssim \|\nabla A\|_{X^r_{1-\epsilon,b}}^2 \|\phi\|_{X^r_{s,b}} \lesssim \|\nabla A\|_{X^r_{l-1,b}}^2 \|\phi\|_{X^r_{s,b}} \, ,$$
if $1-\epsilon \le l-1=\frac{1}{2}+\frac{1}{2r} \, \Leftrightarrow \, \epsilon \ge \half - \frac{1}{2r}$ , which is admissible by Lemma \ref{Prop.1.6}. Moreover Lemma \ref{Prop.1.8} implies similarly
$$\|A |\phi|^2\|_{X^r_{l-1,0}}\lesssim \|A |\phi|^2\|_{X^r_{1,0}} \lesssim \|\nabla A\|_{X^r_{1-\epsilon,b}} \|\phi\|_{X^r_{s,b}}^2 \lesssim \|\nabla A\|_{X^r_{l-1,b}} \|\phi\|_{X^r_{s,b}}^2 \, . $$
As before interpolation completes the proof.
\end{proof}

\begin{lemma}
\label{Prop.1.12}
For  $r,b,s,l$ as in Lemma \ref{Prop.1.9} the following estimates apply:
\begin{align*}
\| \partial_t \phi \overline{\partial_k \phi} - \partial_k \phi \overline{\partial_t \phi}\|_{X^r_{s-2,b-1+}} & \lesssim \|\phi\|_{X^r_{s,b}} \|\partial_t \phi\|_{X^r_{s-1,b}} \, , \\
\| \nabla \phi \times \overline{\nabla \phi} \|_{X^r_{s-2,b-1+}} & \lesssim \|\phi\|_{X^r_{s,b}}^2  \, , \\
\|\partial_t(A_k |\phi|^2)\|_{X^r_{s-2,b-1+}} & \lesssim \|\nabla A_k\|_{X^r_{l-1,b}} \|\phi\|^2_{X^r_{s,b}} \\
\|\nabla(A |\phi|^2)|\|_{X^r_{s-2,b-1+}} & \lesssim \|\nabla A\|_{X^r_{l-1,b}} \|\phi\|^2_{X^r_{s,b}} \, .
\end{align*}
\end{lemma}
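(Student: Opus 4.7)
The strategy is identical to that of Lemmas \ref{Prop.1.9}--\ref{Prop.1.11}: establish each estimate at the two endpoints $r = 1+$ (where $s = 2$, $l = \frac{3}{2} + \frac{1}{2r}$) and $r = 2$ (where $s = \frac{3}{4}+$, $l - 1 = -\frac{1}{2}+$), and then apply bilinear or trilinear complex interpolation with parameter $\theta = 2 - \frac{2}{r}$ to cover the full range $1 < r \le 2$.

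For the $r = 1+$ endpoint of the first two (bilinear null-form) estimates, note that with $s = 2$ the target regularity $s - 2 = 0$ and $b - 1 + < 0$, so it suffices to bound each expression in $X^r_{0,0}$. The quantity $\partial_t \phi \, \overline{\partial_k \phi} - \partial_k \phi \, \overline{\partial_t \phi}$ has exactly the symbol of $q_{0k}(\phi,\overline{\phi})$, and $\nabla \phi \times \overline{\nabla \phi}$ has the componentwise structure of $q_{jk}(\phi,\overline{\phi})$. Both are provided directly by Lemma \ref{Prop. 1.7}, using $\|\nabla \phi\|_{X^r_{1,b}} \lesssim \|\phi\|_{X^r_{2,b}}$ in the second case and identifying $\|\partial_t\phi\|_{X^r_{1,b}} = \|\partial_t\phi\|_{X^r_{s-1,b}}$ in the first. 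For the $r = 2$ endpoint I would invoke the corresponding null-form estimates from \cite{P}, Chapter 5.

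For the $r = 1+$ endpoint of the cubic estimates, again $s - 2 = 0$, so Lemma \ref{Prop.1.8} applies directly to give
\[
\|\partial_t(A_k|\phi|^2)\|_{X^r_{0,0}} + \|\nabla(A|\phi|^2)\|_{X^r_{0,0}} \lesssim \|\nabla A\|_{X^r_{1-\epsilon,b}} \|\phi\|_{X^r_{2,b}}^2 \lesssim \|\nabla A\|_{X^r_{l-1,b}} \|\phi\|_{X^r_{2,b}}^2,
\]
where the last step uses $1 - \epsilon \le l - 1 = \frac{1}{2} + \frac{1}{2r}$, which is permissible by choosing $\epsilon$ in the admissible range $\frac{1}{2} - \frac{1}{2r} \le \epsilon < \frac{3}{2}(1 - \frac{1}{r})$ for $r$ close enough to $1$ (this interval is nonempty provided $r > 1$). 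For the $r = 2$ endpoint we again quote the corresponding trilinear estimates from \cite{P}, Chapter 5.

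Once both endpoints are in place, bilinear (respectively trilinear) complex interpolation, using the interpolation property \eqref{102} of the $X^r_{s,b,\phi}$-scale with $\theta = 2 - \frac{2}{r}$, interpolates the regularity indices $s$, $l$ and $b$ linearly in $\frac{1}{r}$ and yields the claimed estimates for all $1 < r \le 2$. The only real delicacy I anticipate is bookkeeping: one must verify that the regularity indices at the two endpoints interpolate exactly to $s = \frac{5}{2r} - \frac{1}{2} + \delta$ and $l = \frac{3}{r} - 1 + \delta$, and that the $b$-parameter on the right hand side can simultaneously be chosen of the form $\frac{1}{r}+$. Since every index depends affinely on $\frac{1}{r}$ this matches automatically, as in the preceding three lemmas, so no new obstruction appears.
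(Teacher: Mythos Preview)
Your proposal is correct and follows essentially the same interpolation strategy as the paper: the $r=1+$ endpoint via Lemma~\ref{Prop. 1.7} (equivalently Lemmas~\ref{Prop.1.1}--\ref{Prop.1.2} with $\alpha_0=0$, $\alpha_1=\alpha_2=\tfrac{1}{r}$) and Lemma~\ref{Prop.1.8}, then interpolated against the $r=2$ estimates from \cite{P}. The only correction is bibliographic: the $r=2$ estimates for the $F_{\mu\nu}$ nonlinearities are in \cite{P}, Chapter~6 (Claim~1 and (39)--(40) for the null forms, Claims~3 and~5 for the cubic terms), not Chapter~5.
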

\begin{proof}
Concerning the first estimate we combine the following estimate by \cite{P}, Chapter 6 ( more precisely we use Claim 1 and inequality (39),(40) in \cite{P}):
$$ \|D^{-1}( \partial_t \phi \overline{\partial_k \phi} - \partial_k \phi \overline{\partial_t \phi})\|_{X^2_{-\frac{1}{4}+\epsilon,-\half+}} \lesssim \|\phi\|_{X^2_{\frac{3}{4}+\epsilon,\half+}} \|\partial_t \phi\|_{X^2_{-\frac{1}{4}+\epsilon,\half+}} $$
and Lemma \ref{Prop.1.2} for $r=1+$ , $\alpha_0=0$ , $\alpha_1,\alpha_2 = \frac{1}{r}$ :
$$\|  \partial_t \phi \overline{\partial_k \phi} - \partial_k \phi \overline{\partial_t \phi} \|_{X^r_{0,0}}  \lesssim \|\nabla \phi\|_{X^r_{1,b}} \|\partial_t \phi\|_{X^r_{1,b}} \, .$$
Similarly the second estimate is proven by \cite{P}, Chapter 6 and Lemma \ref{Prop.1.1}. Concerning the remaining estimates we interpolate the case $r=2$ by \cite{P}, Chapter 6, Claims 3 and 5, and Lemma \ref{Prop.1.8}.
\end{proof}

\section{Proof of the Theorems}
\begin{proof}[Proof of Theorem \ref{Theorem0.1}]
By Theorem \ref{Theorem0.0} the claimed result follows by the contraction mapping principle, if the following estimates apply for the system (\ref{16}),(\ref{17}):
\begin{equation}
\label{51}
\| \Lambda^{-1} M(\phi_+,\phi_-,A_+,A_-)\|_{X^r_{s,b-1+,\pm}} \lesssim R^2 + R^3
\end{equation}
and
\begin{equation}
\label{52}
\|\mathcal{N}(\phi_+,\phi_-,A_+,A_-)\|_{X^r_{l-1,-\epsilon_0+,\pm}} \lesssim R^2 + R^3 \, ,
\end{equation}
where
$$ R= \sum_{\pm}\left(\|\phi_{\pm}\|_{X^r_{s,b,\pm}} + \|\nabla A_{\pm}\|_{X^r_{l-1,1-\epsilon_0,\pm}}\right) \, . $$
The estimate (\ref{51}) is a consequence of Lemma \ref{Prop.1.9} and Lemma \ref{Prop.1.11}, whereas (\ref{52}) follows from Lemma \ref{Prop.1.10} and Lemma \ref{Prop.1.11}.
\end{proof}

{\bf Remark:} We make use of the following well-known estimate for the linear wave equation $ \square F = G $ . Considering the equivalent first order equation
$ (i\partial_t \pm D) F_{\pm} = -(\pm 2 D)^{-1} G $ , where $F_{\pm}=\half(F\pm(iD)^{-1} F_t)$ , so that $F=F_++F_-$ and $\partial_t F = iD(F_+-F_-)$, we may use \cite{G}, Chapter 2.2 to conclude
$$ \|\nabla F_{\pm}\|_{X^r_{s-2,b,\pm}[0,T]} \lesssim \|(\nabla F)(0)\|_{\widehat{H}^{s-2,r}} + \|(\partial_t F)(0)\|_{\widehat{H}^{s-2,r}} + T^{0+} \|G\|_{X^r_{s-2,b-1+,\pm}[0,T]} \, , $$
which immediately implies
\begin{align}
\label{LE}
&\|\nabla F\|_{X^r_{s-2,b}[0,T]} + \|\partial_t F\|_{X^r_{s-2,b}[0,T]}  \\
\nonumber
&     \lesssim \|(\nabla F)(0)\|_{\widehat{H}^{s-2,r}} + \|(\partial_t F)(0)\|_{\widehat{H}^{s-2,r}} + T^{0+} \|G\|_{X^r_{s-2,b-1+}[0,T]} \, . 
\end{align}

\begin{proof}[Proof of Theorem \ref{Theorem0.2}]
By the linear estimate (\ref{LE}) we have to prove the following properties:
\begin{align}
\label{61}
(\nabla F_{\mu \nu})(0) & \in \widehat{H}^{s-2,r} \\
\label{62}
(\partial_t F_{\mu\nu})(0) &\in \widehat{H}^{s-2,r} \\
\label{63}
\square F_{\mu\nu} & \in X^r_{s-2,b-1+}[0,T] \, .
\end{align}
(\ref{63}) is a consequence of Lemma \ref{Prop.1.12} and Theorem \ref{Theorem0.1}, (\ref{61}) is our assumption (\ref{8}), so that it remains to prove (\ref{62}).
By (\ref{1}) we obtain
\begin{equation}
\label{101}
\partial_t F_{0k} = -\partial_t F_{k0} = -\partial^l F_{kl} + j_k \, . 
\end{equation}
Now we have $ j_{k_{|t=0}} = Im(\phi_0 \overline{\partial_k \phi_0}) + |\phi_0|^2 a_{0k} $ . In the case $r=1+$ we may estimate by Young and H\"older :
\begin{align*}
&\|\phi_0 \overline{\partial_k \phi_0}\|_{\widehat{H}^{0,r}} = \|\widehat{\phi_0 \overline{\partial_k \phi_0}}\|_{L^{r'}}\\
& \lesssim \|\langle \xi \rangle^{-1}\|_{L^p} \| |\xi|^{-1}\|_{L^{3\pm}} \||\xi|\langle \xi \rangle \widehat{\phi_0}\|_{L^{r'}} \|\langle \xi \rangle^{-1}\|_{L^{3+}}  \| \langle \xi \rangle \widehat{\partial_k \phi_0}\|_{L^{r'}} \\
& \lesssim \| \nabla \phi_0\|_{\widehat{H}^{1,r}} \|\partial_k \phi_0\|_{\widehat{H}^{1,r}} \, .
\end{align*}
Here we need  $1+\frac{1}{r'}= \frac{2}{3}+\frac{1}{p}+\frac{2}{r'} - $ , so that $\frac{1}{p} = \frac{1}{3}-\frac{1}{r'} + < \frac{1}{3}$ , so that the integrals converge. $3+$ and $3-$ belongs to the region $|\xi|\ge 1$ and $|\xi| \le 1$, respectively. In the case $r=2$ we obtain by Sobolev's multiplication law:
$$\|\phi_0 \overline{\partial_k \phi_0}\|_{H^{s-2}} \lesssim \|\phi_0\|_{H^s} \|\partial_k \phi_0\|_{H^{s-1}} $$
for $s>\half$ , especially for $s=\frac{3}{4}+$ , so that by interpolation we obtain
$$\|\phi_0 \overline{\partial_k \phi_0}\|_{\widehat{H}^{s-2,r}} \lesssim \|  \phi_0\|_{\widehat{H}^{s,r}} \|\partial_k \phi_0\|_{\widehat{H}^{s-1,r}} < \infty \, . $$
Moreover we obtain in the case $r=1+$ similarly as before:
$$ \| |\phi_0|^2 a_{0k}\|_{\widehat{H}^{0,r}} \lesssim \| |\phi_0|^2\|_{\widehat{H}^{1,r}} \|\nabla a_{0k}\|_{\widehat{H}^{1,r}} \lesssim \|\phi_0\|_{\widehat{H}	^{2,r}}^2 \|\nabla a_{0k}\|_{\widehat{H}^{1,r}} \, . $$
In the case $r=2$ , $s=\frac{3}{4}+\delta$ , $l= \half+\delta$ we use Sobolev's multiplication law (cf. \cite{T}, Prop. 3.15). If the frequency of $\widehat{a_{0k}}$ is small we obtain
$$ \| |\phi_0|^2 a_{0k}\|_{H^{s-2}} \le \| |\phi_0|^2 a_{0k}\|_{\dot{H}^{-\half}} \lesssim \| |\phi_0|^2\|_{L^2} \|a_{0k}\|_{\dot{H}^1} \lesssim \|\phi_0\|_{H^s}^2 \|\nabla a_{0k}\|_{H^{l-1}} \, , $$ 
and in the case of high frequencies of $\widehat{a_{0k}}$ :
$$ \| |\phi_0|^2 a_{0k}\|_{H^{s-2}} \lesssim \| |\phi_0|^2\|_{L^2} \|a_{0k}\|_{H^l} \lesssim \|\phi_0\|_{H^s}^2 \|\nabla a_{0k}\|_{H^{l-1}}  $$ 
first for $s \le \frac{3}{2}$ , but then also for larger $s$ by the Leibniz rule.
By interpolation we obtain 
$$ \| |\phi_0|^2 a_{0k}\|_{\widehat{H}^{s-2,r}} \lesssim \|\phi_0\|^2_{\widehat{H}^{s,r}} \|\nabla a_{0k}\|_{\widehat{H}^{l-1,r}} < \infty \, . $$
This implies $j_{k_{|t=0}} \in \widehat{H}^{s-2,r}$ , thus by (\ref{101}) $\partial_t F_{{0k}_{|t=0}} \in  \widehat{H}^{s-2,r}$ , because by (\ref{8}) : $F_{kl} \in \widehat{H}^{s-1,r}$ . Finally we obtain :
$$\partial_t F_{jk} = \partial_j(\partial_k A_0 + F_{0k}) - \partial_k(\partial_j A_0 + F_{0j}) = \partial_j F_{0k} - \partial_k F_{0j} \, ,$$
so that $\partial_t F_{{jk}_{|t=0}} \in \widehat{H}^{s-2,r}$ by (\ref{8}) , thus (\ref{62}) is proven.
\end{proof}

\begin{proof}[Proof of Theorem \ref{Theorem0.3}]
Starting with the solution $(\phi_{\pm},A_{\pm})$ of Theorem \ref{Theorem0.1} and defining $\phi=\phi_+ + \phi_-$ , $A=A_++A_-$ , it is possible to show that $(\phi,A)$ fulfills (\ref{1}),(\ref{2}) and the Lorenz condition. Moreover $F_{\mu \nu} = \partial_{\mu}A_{\nu} - \partial_{\nu} A_{\mu}$ fulfills (\ref{27}) and (\ref{28}), so that the claimed regularity follows by Theorem \ref{Theorem0.1} and Theorem \ref{Theorem0.2}.  Because these facts were proven in \cite{P}, Section 6 (see also  \cite{ST}, Section 5) we omit the calculation.
\end{proof}
{\bf Acknowledgment:} I thank the referees for their proposals which helped to improve and modify the paper.

\end{document}